%% LyX 2.3.5.2 created this file.  For more info, see http://www.lyx.org/.
%% Do not edit unless you really know what you are doing.
\documentclass[11pt,oneside,english]{amsart}
\usepackage[latin9]{inputenc}
\usepackage[a4paper]{geometry}
\geometry{verbose,tmargin=2.4cm,bmargin=2.4cm,lmargin=2.4cm,rmargin=2.4cm}
\setcounter{tocdepth}{2}
\setlength{\parskip}{\smallskipamount}
\setlength{\parindent}{0pt}
\usepackage{babel}
\usepackage{slashed}
\usepackage{amstext}
\usepackage{amsthm}
\usepackage{amssymb}
\usepackage{stackrel}
\usepackage{setspace}
\usepackage{color}
\onehalfspacing
\usepackage[unicode=true,pdfusetitle,
 bookmarks=true,bookmarksnumbered=false,bookmarksopen=false,
 breaklinks=false,pdfborder={0 0 1},backref=false,colorlinks=false]
 {hyperref}

\makeatletter
%%%%%%%%%%%%%%%%%%%%%%%%%%%%%% Textclass specific LaTeX commands.
\numberwithin{equation}{section}
\numberwithin{figure}{section}
\theoremstyle{plain}
\newtheorem{thm}{\protect\theoremname}[section]
\theoremstyle{plain}
\newtheorem{question}[thm]{\protect\questionname}
\theoremstyle{definition}
\newtheorem{defn}[thm]{\protect\definitionname}
\theoremstyle{remark}
\newtheorem*{acknowledgement*}{\protect\acknowledgementname}
\theoremstyle{plain}
\newtheorem{prop}[thm]{\protect\propositionname}
\theoremstyle{remark}
\newtheorem{rem}[thm]{\protect\remarkname}
\theoremstyle{plain}
\newtheorem{lem}[thm]{\protect\lemmaname}
\theoremstyle{plain}
\newtheorem{cor}[thm]{\protect\corollaryname}

%%%%%%%%%%%%%%%%%%%%%%%%%%%%%% User specified LaTeX commands.
\usepackage{babel}
\newtheorem{claim}{Claim}

\newtheorem{theorem}{Theorem}
\usepackage[shortlabels]{enumitem}

\providecommand{\acknowledgementname}{Acknowledgement}

\providecommand{\corollaryname}{Corollary}
\providecommand{\definitionname}{Definition}
\providecommand{\lemmaname}{Lemma}
\providecommand{\propositionname}{Proposition}
\providecommand{\questionname}{Question}
\providecommand{\remarkname}{Remark}
\providecommand{\theoremname}{Theorem}

\makeatother

\providecommand{\acknowledgementname}{Acknowledgement}
\providecommand{\corollaryname}{Corollary}
\providecommand{\definitionname}{Definition}
\providecommand{\lemmaname}{Lemma}
\providecommand{\propositionname}{Proposition}
\providecommand{\questionname}{Question}
\providecommand{\remarkname}{Remark}
\providecommand{\theoremname}{Theorem}

\begin{document}
\title[A number theoretic characterization of $E$-smooth and (FRS) morphisms]{A number theoretic characterization of $E$-smooth and (FRS) morphisms:
estimates on the number of $\mathbb{Z}/p^{k}\mathbb{Z}$-points}
\author{Raf Cluckers}
\address{Univ.~Lille, CNRS, UMR 8524 - Laboratoire Paul Painlev\'e, F-59000
Lille, France, and KU Leuven, Department of Mathematics, B-3001 Leuven,
Belgium}
\email{Raf.Cluckers@univ-lille.fr}
\urladdr{http://rcluckers.perso.math.cnrs.fr/}
\author{Itay Glazer}
\address{Department of Mathematics, Northwestern University, 2033 Sheridan
Road, Evanston, IL 60208, USA}
\email{itayglazer@gmail.com}
\urladdr{https://sites.google.com/view/itay-glazer}
\author{Yotam I. Hendel}
\address{Department of Mathematics, Northwestern University, 2033 Sheridan
Road, Evanston, IL 60208, USA}
\email{yotam.hendel@gmail.com}
\urladdr{https://sites.google.com/view/yotam-hendel}
\begin{abstract}
We provide uniform estimates on the number of $\mathbb{Z}/p^{k}\mathbb{Z}$-points
lying on fibers of flat morphisms between smooth varieties whose fibers
have rational singularities, termed (FRS) morphisms. For each individual
fiber, the estimates were known by work of Avni and Aizenbud, but
we render them uniform over all fibers. The proof technique for individual
fibers is based on Hironaka's resolution of singularities and Denef's
formula, but breaks down in the uniform case. Instead, we use recent
results from the theory of motivic integration. Our estimates are
moreover equivalent to the (FRS) property, just like in the absolute
case by Avni and Aizenbud. In addition, we define new classes of morphisms,
called $E$-smooth morphisms ($E\in\mathbb{N}$),  which refine the
(FRS) property, 
 and use the methods we developed to provide uniform
number-theoretic estimates as above for their fibers.  Similar estimates are given for fibers of $\varepsilon$-jet flat morphisms, improving previous results by the last two authors. 
\end{abstract}

\maketitle
\global\long\def\nats{\mathbb{\mathbb{N}}}%
\global\long\def\reals{\mathbb{\mathbb{R}}}%
\global\long\def\ints{\mathbb{\mathbb{Z}}}%
\global\long\def\val{\mathbb{\mathrm{val}}}%
\global\long\def\Qp{\mathbb{Q}_{p}}%
\global\long\def\Zp{\mathbb{Z}_{p}}%
\global\long\def\ac{\mathrm{ac}}%
\global\long\def\complex{\mathbb{C}}%
\global\long\def\rats{\mathbb{Q}}%
\global\long\def\supp{\mathrm{supp}}%
\global\long\def\VF{\mathrm{VF}}%
\global\long\def\RF{\mathrm{RF}}%
\global\long\def\VG{\mathrm{VG}}%
\global\long\def\spec{\mathrm{Spec}}%
\global\long\def\Ldp{\mathcal{L}_{\mathrm{DP}}}%
\raggedbottom

\global\long\def\llp{\mathopen{(\!(}}%
\global\long\def\llb{\mathopen{[[}}%
\global\long\def\rrp{\mathopen{)\!)}}%
\global\long\def\rrb{\mathopen{]]}}%

\section{Introduction}

\subsection{Overview}

Let $\varphi:X\to Y$ be an algebraic morphism between smooth $K$-varieties,
where $K$ is a number field. In this paper we give uniform arithmetic
and analytic equivalent characterizations to the (FRS) property of
$\varphi$, namely to the property of being flat with reduced fibers
of rational singularities (see Theorem \ref{Thm A}). These results
can be viewed as a common uniform improvement of the following two
theorems: 
\begin{enumerate}
\item \cite[Theorem A]{AA18}, where bounds were given on the number of
$\ints/p^{k}\ints$-points of reduced local complete intersection
schemes which have rational singularities (see also Theorem \ref{thm: counting points absolute}). 
\item \cite[Theorem 3.4]{AA16}, where pushforward of smooth measures with
respect to $\varphi$ over non-Archimedean local fields were shown
to have bounded density if and only if $\varphi$ is an (FRS) morphism
(see also Theorem \ref{thm:analytic criterion of the (FRS) property}). 
\end{enumerate}
In order to prove our uniform characterizations of the (FRS) property,
it seems natural to try and adapt the algebro-geometric proof of \cite[Theorem A]{AA18}
to the relative case. This fails to work because of unsatisfactory
behavior of resolution of singularities in families, with respect
to taking points over $\ints$, $\ints/p^{k}\ints$ and $\ints_{p}$
(see Section \ref{sec: difficulties- thm A}). Instead, we prove a
model theoretic result of independent interest about approximating
suprema of a certain sub-class of motivic functions, which we call
formally non-negative functions (see Theorem \ref{Thm B}). Using
Theorem \ref{Thm B} and by analyzing the jets of $\varphi$, we prove
Theorem \ref{Thm A}. Theorem \ref{Thm B} further strengthens \cite[Theorem 2.1.3]{CGH18}
in the case of formally non-negative functions. 
Finally, we provide uniform
estimates on the number of $\ints/p^{k}\ints$-points lying on 
fibers of $E$-smooth morphisms, 
 a new notion we introduce which refines the (FRS) property 
 ($E \in \nats$). 
 Uniform estimates are also provided for fibers of $\varepsilon$-jet
flat morphisms, achieving optimal bounds (c.f.~\cite[Theorem 8.18]{GHb}).
See Section \ref{subsec:E-smooth-and--jet flat} and Theorems
\ref{thm:characterization of E-smooth morphisms} and \ref{thm:characterization of epsilon-jet flatness} for these notions and results.

\subsection{Counting points over $\protect\ints/p^{k}\protect\ints$ : the absolute
case}

Let $X$ be a finite type $\ints$-scheme. The study of the quantity
$\#X(\ints/n\ints)$, and its asymptotic behavior in $n\in\nats$,
is a long standing problem in number theory. When $n=p$ is prime,
the asymptotic behavior is understood by the Lang-Weil estimates \cite{LW54},
and in particular, the family 
\[
\left\{ \frac{\#X(\ints/p\ints)}{p^{\mathrm{dim}X_{\rats}}}\right\} _{p}
\]
is uniformly bounded.

Moving to the case where $n=p^{k}$ is a prime power (which suffices,
by the Chinese remainder theorem), one can observe the following;
if $X$ is smooth as a $\ints$-scheme, then an application of Hensel's
lemma shows that 
\[
\left\{ \frac{\#X(\ints/p^{k}\ints)}{p^{k\mathrm{dim}X_{\rats}}}\right\} _{p,k}
\]
is uniformly bounded in both $p$ and $k$. On the other hand, taking
the non-reduced scheme $X=\mathrm{Spec}\mathbb{Z}[x]/(x^{2})$, we
see that 
\[
\frac{\#X(\ints/p^{2k}\ints)}{p^{2k\mathrm{dim}X_{\rats}}}=\#X(\ints/p^{2k}\ints)=p^{k},
\]
which is not uniformly bounded. The following natural question arises. 
\begin{question}
\label{que:absolute case}Is there a necessary and sufficient condition
on $X$ such that $\left\{ \frac{\#X(\ints/p^{k}\ints)}{p^{k\mathrm{dim}X_{\rats}}}\right\} _{p,k}$
is uniformly bounded?
\end{question}

In \cite{AA18}, Aizenbud and Avni, relying on results of Musta\c{t}\u{a}
\cite{Mus01} and Denef \cite{Den87}, gave such a necessary and sufficient
condition in the case where $X_{\rats}$ is a local complete intersection. 
\begin{defn}
\label{def:rational singularities}Let $K$ be a field of characteristic
$0$. A $K$-scheme of finite type $X$ has \textit{rational singularities}
if it is normal and for every resolution of singularities $\pi:\widetilde{X}\rightarrow X$,
one has $R^{i}\pi_{*}(O_{\widetilde{X}})=0$ for $i\geq1$. 
\end{defn}

\begin{thm}[{{see \cite[Theorem A]{AA18} and \cite{Gla19}}}]
\label{thm: counting points absolute}Let $X$ be a finite type $\mathbb{Z}$-scheme
such that $X_{\mathbb{Q}}$ is equidimensional and a local complete
intersection. Then the following are equivalent: 
\begin{enumerate}
\item $X_{\mathbb{Q}}$ has rational singularities (and, in particular,
$X_{\mathbb{Q}}$ is reduced). 
\item There exists $C>0$ such that for every prime $p$ and every $k\in\nats$
one has 
\[
\frac{\#X(\ints/p^{k}\ints)}{p^{k\mathrm{dim}X_{\rats}}}<C.
\]
\item There exists $C>0$ such that for every prime $p$ and every $k\in\nats$
one has 
\[
\left|\frac{\#X(\ints/p^{k}\ints)}{p^{k\mathrm{dim}X_{\rats}}}-\frac{\#X(\ints/p\ints)}{p^{\mathrm{dim}X_{\rats}}}\right|<Cp^{-1}.
\]
\end{enumerate}
\end{thm}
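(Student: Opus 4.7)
The plan is to prove the cyclic chain $(1) \Rightarrow (3) \Rightarrow (2) \Rightarrow (1)$. The implication $(3) \Rightarrow (2)$ is immediate: Lang--Weil provides an absolute bound on $\#X(\ints/p\ints)/p^{\dim X_\rats}$, and $(3)$ then transfers this bound to all $k$ with a constant shift.

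For $(1) \Rightarrow (3)$, I would first invoke Hironaka's theorem over $\rats$ to obtain a log resolution $\pi : \widetilde{X} \to X_\rats$ with simple normal crossing exceptional divisor $\sum_i E_i$, and spread out to a proper birational morphism of $\ints[1/N]$-schemes with $\widetilde{X}$ smooth and good reduction. The finitely many excluded primes are absorbed into the final constant $C$. For a prime $p$ of good reduction, Denef's change-of-variables formula then rewrites
\[
\frac{\#X(\ints/p^{k}\ints)}{p^{k\dim X_\rats}}
\]
as a finite sum, indexed by subsets $I$ of exceptional components meeting at a point, of Igusa-type integrals with a monomial integrand whose exponents are determined by the coefficients $a_i$ of the relative canonical divisor $K_{\widetilde{X}/X}=\sum a_i E_i$. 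The crucial input is that for an equidimensional local complete intersection, rational singularities coincides with canonical singularities by a theorem of Elkik, so $a_i \geq 0$. A direct evaluation then shows that the stratum $I = \emptyset$ contributes $\#X^{\mathrm{sm}}(\ints/p\ints)/p^{\dim X_\rats}$ by Hensel's lemma, while every other stratum is dampened by a factor of the form $(p-1)^{|I|}/(p^{b}-1)$ with $b \geq 1$ and so contributes $O(p^{-1})$. Finally, $\#X(\ints/p\ints)/p^{\dim X_\rats}$ differs from $\#X^{\mathrm{sm}}(\ints/p\ints)/p^{\dim X_\rats}$ only by the contribution of the singular locus, which has strictly smaller dimension and hence is $O(p^{-1})$ by Lang--Weil.

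For $(2) \Rightarrow (1)$, I would argue by contraposition using Musta\c{t}\u{a}'s jet-scheme characterization: if $X_\rats$ is an equidimensional local complete intersection without rational singularities, there exists $m$ such that the jet scheme $J_m(X_\rats)$ has an irreducible component of dimension strictly greater than $(m+1)\dim X_\rats$. After spreading out to $\ints[1/N]$, Lang--Weil applied to that component yields
\[
\# J_m(X_{\ints/p\ints})(\ints/p\ints) \geq c\cdot p^{(m+1)\dim X_\rats + 1}
\]
for all sufficiently large $p$. Since $J_m(X_{\ints/p\ints})(\ints/p\ints)=X_{\ints/p\ints}(\ints/p\ints[t]/(t^{m+1}))$, a standard comparison for local complete intersections relates this count to $\#X(\ints/p^{k}\ints)/p^{k\dim X_\rats}$ for appropriate $k$, forcing the latter to grow with $p$ and contradicting $(2)$.

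The main obstacle is the sharp error term in $(1) \Rightarrow (3)$: establishing mere uniform boundedness is a fairly direct consequence of the positivity $a_i \geq 0$, but isolating the precise leading contribution and verifying that every proper stratum is suppressed by at least one factor of $p^{-1}$ requires careful bookkeeping of the combinatorics of the exceptional locus. This is also the ultimate source of difficulty in the relative version pursued in the present paper, where uniform control over resolutions in families fails and motivic integration has to replace the classical Denef apparatus.
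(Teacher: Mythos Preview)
The paper does not itself prove this theorem---it is cited from \cite{AA18} and \cite{Gla19}---but the proof strategy is summarized in Section~\ref{sec: difficulties- thm A}, and your outline diverges from it in two substantive ways, each hiding a genuine gap.

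For $(1) \Rightarrow (3)$ you take a \emph{direct} log resolution $\pi: \widetilde{X} \to X_\rats$ and work with the discrepancies of $K_{\widetilde{X}/X}$. The approach in \cite{AA18} instead embeds $X$ locally as a complete intersection $V(f_1,\dots,f_c) \subset \mathbb{A}^N$ and takes an \emph{embedded} resolution of the pair $(X_\rats, \mathbb{A}^N_\rats)$. This distinction matters: Denef's formula computes the level-set volumes $\mathrm{Vol}\{x \in \Zp^N : \min_i \val(f_i(x)) \geq k\}$, and for a codimension-$c$ complete intersection this volume is exactly $p^{-kN}\cdot \#X(\ints/p^k\ints)$, so one gets direct access to the point count. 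A direct resolution of $X$ computes, via change of variables, the Gorenstein measure $\int_{\widetilde{X}(\Zp)} \prod_i |t_i|^{a_i}\,d\mu_{\widetilde{X}}$ on $X(\Zp)$, but relating that measure to $\#X(\ints/p^k\ints)$ when $X$ is singular is precisely the nontrivial content you are trying to establish. Your sketch does not explain how Denef's formula applied to $\pi$ yields a formula for the mod-$p^k$ count.

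For $(2) \Rightarrow (1)$ your contrapositive asserts that if $X_\rats$ lacks rational singularities then some $J_m(X_\rats)$ has a component of dimension strictly greater than $(m+1)\dim X_\rats$. But Musta\c{t}\u{a}'s criterion (Theorem~\ref{thm:rational singularities and jet schemes}) is about \emph{irreducibility}, not dimension. For a normal Gorenstein LCI with log-canonical but non-canonical singularities---for instance the affine cone over a smooth plane cubic---every $J_m(X)$ has dimension exactly $(m+1)\dim X$, yet is reducible for large $m$. Your Lang--Weil step then only yields $\#J_m(X)(\mathbb{F}_p) \sim C(m)\,p^{(m+1)\dim X}$ with $C(m)$ finite for each fixed $m$, which does not contradict $(2)$; the actual unboundedness comes from $C(m) \to \infty$ as $m$ grows, and detecting that requires either the embedded-resolution computation or a finer jet-scheme argument tracking component counts rather than dimensions. (As a secondary point, the ``standard comparison'' you invoke between $\mathbb{F}_p[t]/(t^{m+1})$-points and $\ints/p^{m+1}\ints$-points is the Ax--Kochen--Er\v{s}ov/Denef--Loeser transfer, which is correct for $p\gg 0$ but should be named.)
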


Motivated 

\subsection{Counting points over $\protect\ints/p^{k}\protect\ints$ : the relative
case}

Let $X$ and $Y$ be smooth finite type $\ints$-schemes and let $\varphi:X\rightarrow Y$
be a dominant morphism. Our goal in this paper is to treat the relative
analogue of Question \ref{que:absolute case}: 
\begin{question}
\label{que:relative case}Is there a necessary and sufficient condition
on $\varphi$ such that the size of each fiber of $\varphi:X(\ints/p^{k}\ints)\rightarrow Y(\ints/p^{k}\ints)$,
normalized by $p^{k(\mathrm{dim}X_{\rats}-\mathrm{dim}Y_{\rats})}$,
is uniformly bounded when varying $p,k$ and $y\in Y(\ints/p^{k}\ints)$? 
\end{question}

Since the Lang-Weil estimates are effective uniformly over all schemes
of bounded complexity, Question \ref{que:relative case} is easily
answered in the case where $k=1$; the condition that \textbf{$\varphi_{\rats}$
is flat} is necessary and sufficient (see \cite[Theorem 8.4]{GHb}).
For the general case, we use the following notion from \cite[Definition II]{AA16}.
By a $K$-variety with $K$ a field we mean a reduced $K$-scheme
of finite type. 
\begin{defn}
\label{def:(FRS)}Let $X$ and $Y$ be smooth $K$-varieties, where
$K$ is a field with $\mathrm{char}(K)=0$. We say that a morphism
$\varphi:X\rightarrow Y$ is \textit{(FRS)} if it is flat and if every
fiber of $\varphi$ has rational singularities. 
\end{defn}

\subsection{Main results}

We are now ready to state the main result of this paper.

\begin{theorem}[See Theorem \ref{thm:number theoretic characterization of the (FRS) property} for a more general version]\label{Thm A}Let
$\varphi:X\to Y$ be a dominant morphism between finite type $\ints$-schemes
$X$ and $Y$, with $X_{\rats},Y_{\rats}$ smooth and geometrically
irreducible. Then the following are equivalent: 
\begin{enumerate}
\item $\varphi_{\rats}:X_{\rats}\to Y_{\rats}$ is (FRS). 
\item There exists $C_{1}>0$ such that for every prime $p$, $k\in\nats$
and $y\in Y(\ints/p^{k}\ints)$ one has 
\[
\frac{\#\varphi^{-1}(y)}{p^{k(\mathrm{dim}X_{\rats}-\mathrm{dim}Y_{\rats})}}<C_{1}.
\]
\item There exists $C_{2}>0$ such that for every prime $p$, $k\in\nats$
and $y\in Y(\ints/p^{k}\ints)$ one has 
\[
\left|\frac{\#\varphi^{-1}(y)}{p^{k(\mathrm{dim}X_{\rats}-\mathrm{dim}Y_{\rats})}}-\frac{\#\varphi^{-1}(\bar{y})}{p^{(\mathrm{dim}X_{\rats}-\mathrm{dim}Y_{\rats})}}\right|<C_{2}p^{-1},
\]
where $\overline{y}$ is the image of $y$ under the reduction $Y(\ints/p^{k}\ints)\rightarrow Y(\mathbb{F}_{p})$. 
\item There exists $C_{3}>0$ such that the following hold for every prime
$p$. Let $\mu_{X(\Zp)}$ and $\mu_{Y(\Zp)}$ be the canonical measures
on $X(\Zp)$ and $Y(\Zp)$ (see Lemma-Definition \ref{lem:canonical measure}).
Then the pushforward measure $\varphi_{*}\mu_{X(\Zp)}$ has continuous
density $f_{p}$ with respect to $\mu_{Y(\Zp)}$, and $\left\Vert f_{p}\right\Vert _{\infty}<C_{3}$. 
\end{enumerate}
\end{theorem}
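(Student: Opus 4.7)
The plan is to reduce the relative statement to the absolute Theorem \ref{thm: counting points absolute} and the analytic criterion \cite[Theorem 3.4]{AA16}, with the uniformity in $p,k,y$ supplied by Theorem \ref{Thm B} on formally non-negative motivic functions. The basic dictionary among the four conditions is the following: for $y \in Y(\ints/p^{k}\ints)$, let $B_y \subseteq Y(\Zp)$ denote the ball of Haar measure $p^{-k\dim Y_{\rats}}$ lying over $y$; unwinding definitions, one obtains
\[
\frac{\#\varphi^{-1}(y)}{p^{k(\dim X_{\rats} - \dim Y_{\rats})}} \;=\; \frac{1}{\mu_{Y(\Zp)}(B_y)} \int_{B_y} f_p \, d\mu_{Y(\Zp)},
\]
so that $(2)$ is an averaged form of the sup-norm bound in $(4)$, and $(3)$ records the fluctuation between consecutive levels $k$.

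For the main implication $(1) \Rightarrow (4)$, I would first fix $p$ and a point $y \in Y(\Qp)$ and observe that the fiber $\varphi^{-1}(y)$ is a reduced local complete intersection with rational singularities; Theorem \ref{thm: counting points absolute} then ensures that $f_p$ is pointwise bounded at $y$, while \cite[Theorem 3.4]{AA16} yields its continuity. The crux is to upgrade this pointwise control into a bound uniform in both $p$ and $y$. Here I would invoke Theorem \ref{Thm B}: the family $(f_p)_p$ assembles into a motivic function on $Y$ which is formally non-negative, being a push-forward of a smooth non-negative density, and Theorem \ref{Thm B} converts the pointwise bound supplied by the absolute case into the desired uniform supremum estimate.

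The converse $(4) \Rightarrow (1)$ is easier: uniform boundedness of $\|f_p\|_\infty$ specializes to a bound at each $\Qp$-point $y_0$, whence \cite[Theorem 3.4]{AA16} gives that $\varphi^{-1}(y_0)$ has rational singularities, and the relative formulation similarly forces flatness. The equivalences $(2) \Leftrightarrow (4)$ and $(3) \Leftrightarrow (4)$ then follow from the dictionary, the continuity of $f_p$, and an analysis of the jets of $\varphi$ that isolates the $p^{-1}$ correction needed in $(3)$. The principal obstacle, flagged in the introduction, is the non-functorial behaviour of resolution of singularities in families over $\spec \ints$: the Hironaka--Denef proof of \cite{AA18} does not extend to deliver uniformity in $y$, and Theorem \ref{Thm B} on formally non-negative motivic functions is introduced precisely to supply this uniformity by model-theoretic, rather than geometric, means.
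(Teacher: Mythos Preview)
Your proposal has a genuine gap in the main implication $(1)\Rightarrow(4)$. You write that Theorem~\ref{Thm B} ``converts the pointwise bound supplied by the absolute case into the desired uniform supremum estimate'', but this is not what Theorem~\ref{Thm B} provides. Applied to the density $f\in\mathcal{C}_+(Y)$ with $X$ a point and $W=Y$, Theorem~\ref{Thm B} yields only a motivic \emph{number} $G=\{G_F\}_F$ satisfying $\sup_y f_F(y)\le G_F\le C\sup_y f_F(y)$. Since the analytic criterion and the absolute Theorem~\ref{thm: counting points absolute} give only bounds that depend on $F$ (respectively, on the individual fiber), all you can conclude is that $G_F<\infty$ for each $F$, which is automatic for any motivic number; nothing prevents $G_F$ from growing like $q_F$. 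Uniformity in $p$ is exactly the issue, and Theorem~\ref{Thm B} by itself does not deliver it.

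The paper's argument is more delicate. One applies Theorem~\ref{Thm B} not to $f_F(y)$ but to the function $h_F(y,k)$ counting $\ints/p^k\ints$-points lying over the singular locus of $\varphi$, taking the supremum only over $y$ and retaining $k$ as a free variable; this produces $H\in\mathcal{C}_+(\ints_{\ge1})$. A Presburger argument then reduces $\sup_k H_F(k)$ to a finite sum $\sum_{k\in L}H_F(k)$ with $L$ independent of $F$. For each fixed $k\in L$ the bound $\sup_y h_F(y,k)<C(k)q_F^{-1}$ is obtained \emph{geometrically}, via the jet-scheme characterization of the (FRS) property (Proposition~\ref{prop:jet scheme description of the (FRS) property}) together with Lang--Weil applied to the fibers of $J_{k-1}(\varphi)$ (Lemma~\ref{lem:knoweledge on h(y,k)}). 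This jet step is where uniformity in $F$ actually enters, and it is absent from your outline for the main implication. Two further points: condition~(3) is handled through $h$ rather than the naive difference $g_F(\cdot,k)-g_F(\cdot,1)$, precisely because the latter is not formally non-negative and Theorem~\ref{Thm B} would not apply to it; and small primes require a separate treatment via Theorem~\ref{thm:analytic criterion of the (FRS) property} and the canonical measure, which your sketch does not address.
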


Using a jet-scheme characterization of rational singularities by Musta\c{t}\u{a}
\cite{Mus01,Mus02}, it can be shown that a morphism $\varphi:X\rightarrow Y$
between smooth schemes is (FRS) if and only if for each $k\in\nats$,
every non-empty fiber of the corresponding $k$-th jet map $J_{k}(\varphi):J_{k}(X)\rightarrow J_{k}(Y)$
is of dimension $\mathrm{dim}J_{k}(X)-\mathrm{dim}J_{k}(Y)$ (i.e.
$J_{k}(\varphi)$ is flat) and has a singular locus of codimension
at least $1$ (see Subsection \ref{subsec:E-smooth-and--jet flat}
and Lemma \ref{lem:1-smooth is (FRS)}). Based on this characterization,
it is natural to define two variations of the (FRS) property. 
\begin{itemize}
\item A morphism $\varphi$ is \emph{$\varepsilon$-jet flat}, for $\varepsilon\in\reals_{>0}$,
if the fibers of $J_{k}(\varphi)$ are of dimension at most $\mathrm{dim}J_{k}(X)-\varepsilon\mathrm{dim}J_{k}(Y)$,
for all $k\in\nats$ (see \cite[Definition 3.22]{GHb}).
\item A morphism $\varphi$ is called \emph{$E$-smooth} if it is $1$-jet
flat, and each of the fibers of $J_{k}(\varphi)$ has singular locus
of codimension at least $E$.
\end{itemize}
In Section \ref{subsec:Number-theoretic-estimates-for}, using methods
similar to the proof of Theorem \ref{Thm A}, we provide uniform estimates
on the fibers of $E$-smooth and $\varepsilon$-jet flat morphisms
(see Theorems \ref{thm:characterization of E-smooth morphisms} and
\ref{thm:characterization of epsilon-jet flatness}). 
In particular, uniform estimates are given on fibers of flat morphisms whose fibers have terminal or log-canonical singularities.

\subsubsection{\label{sec: difficulties- thm A}Main difficulties in the proof of
Theorem \ref{Thm A}}

The proof of Theorem \ref{thm: counting points absolute} in \cite{AA18}
proceeds by (locally) embedding $X$ as a complete intersection in
$\mathbb{A}^{N}$ and choosing an embedded resolution of singularities
for the pair $(X_{\mathbb{Q}},\mathbb{A}_{\rats}^{N})$, also called
a log-resolution, whose existence follows from \cite{Hir:Res}. For
large $p$, one can then use Denef's formula \cite[Theorem 3.1]{Den87},
to relate $\#X(\ints/p^{k}\ints)$ to $\left\{ \#E_{I}(\mathbb{F}_{p})\right\} _{I}$
and numerical data associated to the choice of resolution, where $\{E_{I}\}_{I}$
is a collection of constructible subsets built out of the prime divisors
$\{E_{i}\}_{i=1}^{M}$ appearing in such a resolution. Combined with
the Lang-Weil estimates for the $E_{I}$'s, this yields estimates
for $\#X(\ints/p^{k}\ints)$. To finally achieve the bounds of Theorem
\ref{thm: counting points absolute}, one needs the reductions modulo
$p$ of the $E_{I}$'s to be of the expected dimensions over $\mathbb{F}_{p}$.
This can always be done if the prime $p$ is large enough (small primes
are treated separately in \cite{Gla19}).

\medskip{}

If $\varphi_{\rats}:X_{\rats}\rightarrow Y_{\rats}$ is (FRS), its
fibers are local complete intersections with rational singularities,
and one may try to mimic the strategy for Theorem \ref{thm: counting points absolute}.
The weak point is that this only seems to work for each fiber separately,
but does not give the desired uniformity in the choice of fiber. One
can try to make this naive fiber-wise strategy more uniform by choosing
some simultaneous resolutions of singularities. This can be done by
breaking $Y$ into constructible subsets, with resolutions over generic
points of the pieces. However, such finite partition of $Y$ into
constructible sets does not behave well at all with respect to taking
points over the rings $\ints$, $\mathbb{Z}/p^{k}\mathbb{Z}$, or
$\Zp$. In fact, as far as we can see, the approach with resolutions
of singularities in families is hard to adapt to the family situation
of Theorem \ref{Thm A}.

To avoid these difficulties, we use the motivic nature of $\ints/p^{k}\ints$-point
count of the fibers of $\varphi$, that is, we use insights from motivic
integration and uniform $p$-adic integration. Let $r_{k}:Y(\Zp)\rightarrow Y(\ints/p^{k}\ints)$
be the reduction map. Write $d:=\mathrm{dim}X_{\mathbb{Q}}-\mathrm{dim}Y_{\mathbb{Q}}$.
For each prime $p$, each $y\in Y(\Zp)$ and each integer $k\ge1$
we set 
\begin{equation}
g_{p}(y,k)=\frac{\#\varphi^{-1}(r_{k}(y))}{p^{kd}}\text{~ and ~}\widetilde{h}_{p}(y,k):=g_{p}(y,k)-g_{p}(y,1),\label{eq: definition of g and h}
\end{equation}
as in the left-hand side of Items (2) and (3) of Theorem \ref{Thm A}.
The collections of functions $\{g_{p}\}_{p},\{\widetilde{h}_{p}\}_{p}$
are examples of \emph{motivic functions}, namely in a uniform $p$-adic
sense as in \cite{CGH18}, but closely related to genuine motivic
constructible functions from \cite{CL08}. We use motivic integration
to extract information on $\{g_{p}\}_{p}$ and $\{\widetilde{h}_{p}\}_{p}$,
which in turn allows us to prove Theorem \ref{Thm A}. 

The proofs of the number-theoretic estimates for $\varepsilon$-jet
flat and $E$-smooth morphisms (Theorems \ref{thm:characterization of E-smooth morphisms}
and \ref{thm:characterization of epsilon-jet flatness}) share similar
difficulties with the proof of Theorem \ref{Thm A}. Theorem \ref{thm:characterization of epsilon-jet flatness}
improves previous bounds for $\varepsilon$-jet flat morphisms: 
the bounds given in \cite[Corollary 2.9]{VZG08}
 on $g_{p}(y,k)$ are uniform in $k$, but not in
$p$ and $y$ (see
Remark \ref{rem:relating jet flatness} for the relation of $\varepsilon$-jet
flatness to the log canonical threshold), and the bounds given in \cite[Theorem 8.18]{GHb} are
uniform in $p,y,k$, but are not optimal.

\subsubsection{Model-theoretic results}

We denote by $\mathrm{Loc}$ the collection of all non-Archimedean
local fields, by $\mathrm{Loc}_{0}$ the collection of all $F\in\mathrm{Loc}$
of characteristic zero, and by $\mathrm{Loc}_{\gg}$ the collection
of all $F\in\mathrm{Loc}$ with large enough residual characteristic,
where 'large enough' changes according to our needs.

Let $\Ldp$ denote the Denef-Pas language. This is a first order language
with three sorts to account for a valued field $F$, a residue field
$k_{F}$ and a value group which we identify with $\ints$. An $\Ldp$-definable
set $X=\{X_{F}\}_{F\in\mathrm{Loc}_{\gg}}$ is a collection of subsets
$X_{F}\subseteq F^{n_{1}}\times k_{F}^{n_{2}}\times\ints^{n_{3}}$
which is uniformly defined using an $\Ldp$-formula. Given $\Ldp$-definable
sets $X$ and $Y$, a collection of functions $\{f:X_{F}\to Y_{F}\}_{F\in\mathrm{Loc}_{\gg}}$
is called an ($\Ldp$-)definable function if its graph is definable.

Given a definable set $X=\{X_{F}\}_{F\in\mathrm{Loc}_{\gg}}$, the
ring of motivic functions $\mathcal{C}(X)$ is a certain natural class
of functions whose building blocks are the definable functions, and
is closed under integration. Built on a natural notion of positivity,
we define the semi-ring of formally non-negative functions $\mathcal{C}_{+}(X)\subset\mathcal{C}(X)$.
As an example, the collection $\{\varphi_{*}\mu_{F}\}_{F\in\mathrm{Loc}_{\gg}}$
of pushforwards of Haar measures $\mu_{F}$ on $\mathcal{O}_{F}^{n}$
under any polynomial map $\varphi$, as well as $\{g_{p}\}_{p}$ above
are formally non-negative motivic functions. The classes $\mathcal{C}_{+}(X)$
and $\mathcal{C}(X)$ above are uniform $p$-adic specializations
of more genuinely motivic functions defined in \cite{CL08,CL10},
but they go by similar methods and theories. See Subsection \ref{subsec:Motivic-functions}
for further details on motivic functions.

As a key step towards proving Theorem \ref{Thm A}, we show the following
strengthening of \cite[Theorem 2.1.3]{CGH18} for the class of formally
non-negative motivic functions:

\begin{theorem}[Theorem \ref{thm:improved supremum theorem}]\label{Thm B}

Let $f$ be in $\mathcal{C}_{+}(X\times W)$, where $X$ and $W$
are $\Ldp$-definable sets. Then there exists a constant $C>0$, and
a function $G\in\mathcal{C}_{+}(X)$ such that for any $F\in\mathrm{Loc}_{\gg}$
and any $x\in X_{F}$ such that $w\mapsto f_{F}(x,w)$ is bounded
on $W_{F}$, we have 
\[
\underset{w\in W_{F}}{\sup}f_{F}(x,w)\leq G_{F}(x)\leq C\cdot\underset{w\in W_{F}}{\sup}f_{F}(x,w).
\]

\end{theorem}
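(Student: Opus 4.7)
The plan is to revisit the proof of \cite[Theorem 2.1.3]{CGH18} and upgrade it to preserve formal non-negativity, rather than to use that theorem as a black box. A direct application would only yield some $G_{0} \in \mathcal{C}(X)$ sandwiching the supremum, but membership in the semi-ring $\mathcal{C}_{+}(X)$ is strictly stronger than pointwise non-negativity: one cannot truncate or take absolute values inside $\mathcal{C}$, and the naive decomposition $G_{0} = G_{0}^{+} - G_{0}^{-}$ with $G_{0}^{\pm} \in \mathcal{C}_{+}$ does not help, since $G_{0}^{+}$ need not be bounded above by a constant multiple of the supremum. So formal non-negativity has to be tracked through the construction itself.

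First, I would apply the Denef--Pas cell decomposition to $X \times W$ and, if needed, refine it so that on each of the finitely many resulting cells $C_{1}, \ldots, C_{m}$ the function $f$ admits a normal form
\[
f|_{C_{i}}(x,w) = q_{F}^{\alpha_{i}(x,w)} \cdot N_{i}(x,w) \cdot H_{i}(x,w),
\]
where $q_F$ denotes the residue field cardinality, $\alpha_{i}$ is an $\Ldp$-definable $\ints$-valued function, $N_{i}$ is a positive integer-valued definable function, and $H_{i}$ is an $\nats$-linear combination of cardinalities of residue-field-definable sets, so that each factor is itself formally non-negative. Because $f \in \mathcal{C}_{+}$, this refinement is compatible with the positive structure of $f$.

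Next, on each cell $C_{i}$ and for fixed $x$, the supremum of $f|_{C_{i}}(x, \cdot)$ over $w$ is equivalent, up to a multiplicative constant depending only on the complexity of the formulas, to the product of the three per-factor suprema. The sup of $H_{i}$ in $w$ is controlled by a uniform count of residue-field definable sets, and the sup of $N_{i}$ similarly, both giving elements of $\mathcal{C}_{+}(X)$. For the value-group factor, after a further cell decomposition in the $\ints$-sort, the sup of the Presburger-definable $\ints$-valued function $\alpha_{i}(x, \cdot)$, whenever finite, is attained at an $\Ldp$-definable point $\beta_{i}(x)$, so $q_{F}^{\beta_{i}(x)} \in \mathcal{C}_{+}(X)$. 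Taking $G_{i}$ to be the product of these three factors and setting $G := \sum_{i=1}^{m} G_{i}$ yields an element of $\mathcal{C}_{+}(X)$ with the required sandwich, with constant $C$ equal to $m$ times the universal per-cell constant.

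The main obstacle is the construction of such a uniform definable approximate maximizer: an exact argmax of a motivic function is not in general definable, but after cell decomposition the problem reduces on each piece to taking the maximum of an $\ints$-valued Presburger-definable function, which is a definable operation whenever it is finite. Ensuring that this reduction is compatible with the interaction between residue-field and valued-field variables, and that formal non-negativity is preserved throughout the required refinements (in particular in passing from sups of products to products of sups), is the technical heart of the argument; the orthogonality of sorts in the Denef--Pas language and the rectilinearization of Presburger-definable sets are the key ingredients that make this go through uniformly in $F \in \mathrm{Loc}_{\gg}$.
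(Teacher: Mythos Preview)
Your outline has a genuine gap at the normal-form step. The factorization $f|_{C_i} = q_F^{\alpha_i} \cdot N_i \cdot H_i$ with each factor separately formally non-negative does not follow from cell decomposition: an element of $\mathcal{P}_+$ is a \emph{sum} of monomials $q_F^{\alpha} \prod_j \beta_j \prod_l (1-q_F^{a_l})^{-1}$, and formal non-negativity is a property of that sum as a whole---the individual $\beta_j$ are $\ints$-valued and may be negative, with cancellation occurring only globally. No refinement turns a general $\mathcal{P}_+$ function into a single such monomial. And even granting a product form, the claim that $\sup_w$ of the product is comparable to the product of the per-factor $\sup_w$'s is false when the factors share the variable $w$: for $f(w) = q_F^{-w}(w+1)$ on $W = \nats$ one has $\sup_w f = 1$ while $(\sup_w q_F^{-w})\cdot(\sup_w (w+1)) = 1\cdot\infty$, so your $G_i$ would be infinite precisely where it must be finite. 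You mention orthogonality of sorts as a key ingredient, but you do not actually deploy it: its role is exactly to arrange that the factors depend on \emph{disjoint} variable sets, after which sup-of-product genuinely equals product-of-sups.

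The paper's route avoids both problems. It first uses the elementary sandwich $\frac{1}{N}\sum_i \sup f_i \le \sup \sum_i f_i \le \sum_i \sup f_i$ for $f_i \ge 0$ to reduce to a single term $\#Y_{x,w}\cdot g(x,w)$ with $g \in \mathcal{P}_+$. After eliminating valued-field variables (first from $W$, then from $X$, precisely so as not to mix them), orthogonality of $\RF$ and $\VG$ gives $f|_A = \#Y_{\eta,\xi}\cdot H(t,s)$ on boxes, the two factors now depending on disjoint variable sets. For the $\VG$-part one does \emph{not} split $H \in \mathcal{P}_+$ into monomials; instead \cite[Lemmas 2.2.3--2.2.4]{CGH18} supply finitely many definable $h_1,\dots,h_l$ and a constant $m$ with $\sup_s H(t,s) \le m\max_j H(t,h_j(t))$ whenever bounded, so that $m\sum_j H(t,h_j(t)) \in \mathcal{P}_+$ sandwiches the sup---a single definable maximizer, as you propose, need not exist. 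For the remaining purely residue-field supremum $\sup_\xi \#Y_{\eta,\xi}$, the decisive input you gloss over is the definable Lang--Weil estimate of Chatzidakis--van den Dries--Macintyre, which partitions so that on each piece $\#Y_{\eta,\xi}$ is uniformly comparable to a fixed power $q_F^{d_i}$.
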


The approximation of suprema given in Theorem \ref{Thm B} is best
possible for the class of formally non-negative motivic functions
$\mathcal{C}_{+}(X\times W)$, in the sense that one cannot choose
$C$ to be a universal constant (see Proposition \ref{prop:approximation is tight}).
In \cite[Theorem 2.1.3]{CGH18}, a similar approximation result is
shown (for motivic functions in $\mathcal{C}(X\times W)$ and in $\mathcal{C}^{\mathrm{exp}}(X\times W)$),
but where the constant $C$ is replaced by $q_{F}^{C}$, with $q_{F}$
the number of elements in the residue field $k_{F}$ of $F$, and
where instead of $\sup f_{F}$ one approximates $\sup|f_{F}|^{2}$.
For more details on the optimality of these approximation results,
see the discussion in Subsection \ref{subsec:Optimality-of-the bounds}.

\subsubsection{Sketch of proof of Theorem \ref{Thm A}}

To prove Theorem \ref{Thm A}, we show $(1)\Rightarrow(3)\Rightarrow(2)\Rightarrow(4)\Rightarrow(1)$.
The implications $(3)\Rightarrow(2)\Rightarrow(4)$ are rather easy
and the implication $(4)\Rightarrow(1)$ essentially follows from
an equivalent analytic characterization of the (FRS) property due
to Aizenbud-Avni (see Theorem \ref{thm:analytic criterion of the (FRS) property}).
The challenging part of the proof is to show $(1)\Rightarrow(3)$.
Small primes are dealt using Theorem \ref{thm:analytic criterion of the (FRS) property}
and using basic properties of the canonical measure (Lemma-Definition
\ref{lem:canonical measure}). Thus we may consider only large enough
primes $p$. Let us sketch the main strategy of the proof of $(1)\Rightarrow(2)$,
for large $p$, which has similar difficulties to $(1)\Rightarrow(3)$. 
\begin{enumerate}
\item[(a)] We use Theorem \ref{thm:analytic criterion of the (FRS) property}
to show that 
\[
\underset{y,k}{\sup}\,g_{p}(y,k)<C(p)
\]
for some constant $C(p)$ depending on $p$. 
\item[(b)] Item (a) and the fact that $g$ is formally non-negative as a motivic
function (see Definition \ref{def:Presburger constructible functions}),
allow us to utilize Theorem \ref{Thm B} to approximate (for each
$k$ and $p$) 
\[
\underset{y\in Y(\Zp)}{\sup}g_{p}(y,k)
\]
by $G_{p}(k)$ for a single motivic function $\{G_{p}:\ints_{\geq1}\rightarrow\reals\}_{p}$. 
\item[(c)] We use results from \cite{CGH18} on approximate suprema of constructible
Presburger functions to deduce that 
\[
\underset{k\in\ints_{\geq1}}{\sup}G_{p}(k)
\]
can be approximated by $\sum_{l\in L}G_{p}(l)$ for some finite subset
$L\subseteq\ints_{\geq1}$, with $L$ independent of $p$. 
\item[(d)] To deal with $G_{p}(l)$ for $l\in L$, we use a transfer principle
for boundedness of motivic functions from \cite{CGH16} (see Theorem
\ref{thm:-transfer principle for bounds} below) to reduce to a question
about the $\mathbb{F}_{p}$-fibers of the $(l-1)$-th jet of $\varphi$.
We then combine Lang-Weil type arguments on the jets of $\varphi$,
together with a jet-scheme interpretation of the (FRS) property (Proposition
\ref{prop:jet scheme description of the (FRS) property}), to deduce
that $G_{p}(l)<C$ for $p\gg1$, $l\in L$ and some constant $C>0$
independent of $p$. %The case of small primes $p$ is finished separately.
\end{enumerate}
This shows $(1)\Rightarrow(2)$. To prove $(1)\Rightarrow(3)$, we
approximate $\widetilde{h}_{p}$ with a motivic function $h_{p}$,
which unlike $\widetilde{h}_{p}$, is formally non-negative. We then
apply similar steps as above (with a few extra complications) to $h_{p}$.

\subsection{Further discussion}

The (FRS) property was first introduced and studied in \cite{AA16,AA18},
where a very useful analytic interpretation was given as follows.
Given a morphism $\varphi:X\rightarrow Y$ between smooth $\rats$-varieties,
the (FRS) property of $\varphi$ is characterized by the property
that for every $F\in\mathrm{Loc}_{0}$ and every smooth, compactly
supported measure $\mu_{X(F)}$ on $X(F)$, the pushforward measure
$\varphi_{*}(\mu_{X})$ on $Y(F)$ has continuous density (see Theorem
\ref{thm:analytic criterion of the (FRS) property} or \cite[Theorem 3.4]{AA16}).
Our number theoretic characterization (Theorem \ref{Thm A}) can be
seen as a refinement of this analytic characterization.

\medskip{}

These characterizations allow one to use algebro-geometric tools to
solve various problems in analysis, probability and group theory.
For a motivating example, let $\underline{G}$ be a semisimple algebraic
$\rats$-group and let $\varphi_{\mathrm{comm}}^{*t}:\underline{G}^{2t}\rightarrow\underline{G}$
be the map $(g_{1},...,g_{2t})\mapsto[g_{1},g_{2}]\cdot...\cdot[g_{2t-1},g_{2t}]$,
corresponding to the product of $t$ commutator maps. Using the above
characterizations and a theorem of Frobenius, one has: 
\[
\varphi_{\mathrm{comm}}^{*t}\text{ is (FRS)}\Rightarrow\#\{N\text{-dimensional irreducible \ensuremath{\mathbb{C}}-representations of }\underline{G}(\Zp)\}=O(N^{2t-2}).\tag{\ensuremath{\star}}
\]
Aizenbud and Avni showed in \cite{AA16,AA18}, that $\varphi_{\mathrm{comm}}^{*21}$
is (FRS) for every $\underline{G}$ as above, which via ($\star$),
confirmed a conjecture of Larsen-Lubotzky \cite{LL08} about representation
growth of compact $p$-adic and arithmetic groups. These bounds were
improved in \cite{Bud19,Kap,GHb}.

\medskip{}

The above situation can be generalized as follows. Let $\varphi:X\rightarrow\underline{G}$
be a dominant morphism from a smooth $\rats$-variety $X$ to a connected
algebraic group $(\underline{G},\cdot_{\underline{G}})$. We define
the \emph{self-convolution} $\varphi*\varphi:X\times X\rightarrow\underline{G}$
of $\varphi$ by $\varphi*\varphi(x_{1},x_{2})=\varphi(x_{1})\cdot_{\underline{G}}\varphi(x_{2})$,
and write $\varphi^{*t}:X^{t}\rightarrow G$ for the $t$-th convolution
power of $\varphi$. Similarly to the usual convolution operation
in analysis, this algebraic convolution operation has a smoothing
effect on morphisms; In \cite{GH19,GHa}, it was shown that $\varphi^{*t}:X^{t}\rightarrow\underline{G}$
has increasingly better singularity properties as $t$ grows, and
eventually, $\varphi^{*t}$ becomes (FRS) for every $t\geq t_{\mathrm{0}}$,
for some $t_{0}\in\nats$. \medskip{}

Moving to the probabilistic picture, let $\mu_{X(\Zp)}$ and $\mu_{\underline{G}(\Zp)}$
be the canonical measures on $X(\Zp)$ and $\underline{G}(\Zp)$,
normalized to have total mass $1$. One can then study the collection
of random walks on $\underline{G}(\Zp)$, induced by the pushforward
measures $\{\varphi_{*}\mu_{X(\Zp)}\}_{p\in\mathrm{primes}}$, by
analyzing the convergence rate of their self-convolutions $(\varphi_{*}\mu_{X(\Zp)})^{*t}$
to $\mu_{\underline{G}(\Zp)}$, in the $L^{q}$-norm ($q\geq1$).
This rate of convergence can be measured by the notion of \emph{$L^{q}$-mixing
time} (see e.g. \cite[Chapter 4]{LeP17}). Note that the analytic
convolution operation commutes with the algebraic convolution defined
above, so that $(\varphi_{*}\mu_{X(\Zp)})^{*t}=(\varphi^{*t})_{*}\mu_{X^{t}(\Zp)}$.
This makes Theorem \ref{Thm A} the connecting link between the algebraic
and the probabilistic pictures above. \medskip{}

Explicitly, let us denote by $t_{\mathrm{alg}}$ the minimal $t\in\nats$
such that $\varphi^{*t}$ is (FRS) and has geometrically irreducible
fibers, and call it the \emph{algebraic mixing time of} $\varphi$.
Then Theorem \ref{Thm A}, and its general form Theorem \ref{thm:number theoretic characterization of the (FRS) property},
imply that the algebraic mixing time of $\varphi$ is equal to the
uniform (in $p\gg1$) $L^{\infty}$-mixing time of the random walks
on $\left\{ \underline{G}(\Zp)_{p}\right\} $ induced by $\left\{ \varphi_{*}\mu_{X(\Zp)}\right\} _{p}$
(see \cite[Definition 9.2]{GHb}). This philosophy was implemented
in \cite{GHb}, which motivated this work. There, the authors analyzed
the singularity properties of word maps on semi-simple algebraic groups,
using purely algebraic techniques, and obtained probabilistic results
on word measures. In particular, Theorem \ref{Thm A} completes the
proof of \cite[Theorems G and 9.3(2)]{GHb}.

\subsection{Conventions}
\begin{itemize}
\item Throughout the paper, we use $K,K',K''$ to denote number fields and
$\mathcal{O}_{K},\mathcal{O}_{K'},\mathcal{O}_{K''}$ for their rings
of integers. Similarly, local fields and their rings of integers are
denoted by $F,F',F''$ and $\mathcal{O}_{F},\mathcal{O}_{F'},\mathcal{O}_{F''}$,
respectively. 
\item Given a local ring $A$, a morphism $\varphi:X\rightarrow Y$ of schemes
$X$ and $Y$, and given $y\in Y(A)$ (i.e. a morphism $\spec(A)\rightarrow Y$),
we denote by $X_{y,\varphi}:=\spec(A)\times_{Y}X$ the scheme theoretic
fiber over $y$, and simply by $\varphi^{-1}(y)\subseteq X(A)$ the
set theoretic fiber of the induced map $\varphi:X(A)\rightarrow Y(A)$.
Note that if $y\in Y$ is a schematic point, then it can be viewed
as $y\in Y(\kappa(y))$, where $\kappa(y)$ is the residue field of
$y$, so that $X_{y,\varphi}:=\spec(\kappa(y))\times_{Y}X$. 
\item Given a $K$-morphism $\varphi:X\rightarrow Y$ between $K$-varieties
$X$ and $Y$, we denote by $X^{\mathrm{sm}}$ (resp.~$X^{\mathrm{sing}}$)
the smooth (resp.~non-smooth) locus of $X$. We denote by $X^{\mathrm{sm,\varphi}}$
(resp.~$X^{\mathrm{sing,\varphi}}$) the smooth (resp.~non-smooth)
locus of $\varphi$ in $X$. 
\item We denote the base change of an $S$-scheme $X$ with respect to $S'\rightarrow S$
by $X_{S'}$. 
\end{itemize}
\begin{acknowledgement*}
The author R.\,C.~was partially supported by the European Research
Council under the European Community's Seventh Framework Programme
(FP7/2007-2013) with ERC Grant Agreement nr. 615722 MOTMELSUM, KU
Leuven IF C14/17/083, and Labex CEMPI (ANR-11-LABX-0007-01). The author
I.\,G.~was partially supported by ISF grant 249/17, BSF grant 2018201
and by a Minerva foundation grant. \\ The authors wish 
to thank Rami Aizenbud, Nir Avni, Jan Denef and Julien Sebag for many useful discussions.
\end{acknowledgement*}

\section{Preliminaries}

\subsection{Jet schemes and singularities}

For a thorough discussion of jet schemes see \cite[Chapter 3]{CLNS18}
and \cite{EM09}. 
\begin{defn}[{{{cf.~\cite[Section 3.2]{CLNS18}}}}]
\label{def:basic definition jet schemes}Let $S$ be a scheme and
let $X$ be a scheme over $S$. 
\begin{enumerate}
\item For each $k\in\nats$, we define the \emph{$k$-th jet scheme} of
$X$, denoted $J_{k}(X/S)$ as the $S$-scheme representing the functor
\[
\mathcal{J}_{k}(X/S):W\longmapsto\mathrm{Hom}_{S\text{-schemes}}(W\times_{\spec\ints}\spec(\ints[t]/(t^{k+1})),X),
\]
where $W$ is an $S$-scheme. We write $J_{k}(X)$ if the scheme $S$
is understood. 
\item Given an $S$-morphism $\varphi:X\rightarrow Y$ and an $S$-scheme
$W$, the composition with $\varphi$ induces a map $\mathcal{J}_{k}(X/S)(W)\rightarrow\mathcal{J}_{k}(Y/S)(W)$,
which yields a morphism 
\[
J_{k}(\varphi):J_{k}(X/S)\rightarrow J_{k}(Y/S),
\]
called the \textsl{$k$-th jet} of $\varphi$. 
\item For any $k_{1}\geq k_{2}\in\nats$ the reduction map $\ints[t]/(t^{k_{1}+1})\rightarrow\ints[t]/(t^{k_{2}+1})$
induces a natural collection of morphisms $\pi_{k_{2},X}^{k_{1}}:J_{k_{1}}(X/S)\rightarrow J_{k_{2}}(X/S)$
which are called \textit{truncation maps}. Note that the collection
$\{J_{k}(\varphi):J_{k}(X/S)\rightarrow J_{k}(Y/S)\}_{k\in\nats}$
commutes with $\{\pi_{n,X}^{m}\}_{m\geq n}$. 
\item The natural map $\ints\rightarrow\ints[t]/(t^{m+1})$ induces a \textsl{zero
section} $s_{m,X}:X\hookrightarrow J_{m}(X)$. We sometimes write
$\pi_{n}^{m}$ and $s_{m}$ instead of $\pi_{n,X}^{m}$ and $s_{m,X}$,
when $X$ is clear. 
\end{enumerate}
\end{defn}

In the rest of this subsection, we assume $S=\mathrm{Spec}K$. Musta\c{t}\u{a}
gave the following interpretation of rational singularities in terms
of jet schemes: 
\begin{thm}[\cite{Mus01}]
\label{thm:rational singularities and jet schemes}Let $X$ be a
geometrically irreducible, local complete intersection $K$-variety,
with $\mathrm{char}(K)=0$. Then $J_{k}(X)$ is geometrically irreducible
for all $k\geq1$ if and only if $X$ has rational singularities. 
\end{thm}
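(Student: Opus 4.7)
The plan is to reduce geometric irreducibility of each $J_k(X)$ to a dimension estimate for the preimage of $X^{\mathrm{sing}}$, and to extract that estimate from the change-of-variables formula of motivic integration on a log resolution. The rigidity of the local complete intersection hypothesis is what makes this reduction sharp.

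First, since $X$ is lci of pure dimension $n := \dim X$, each $J_{k}(X)$ is itself lci of pure dimension $(k+1)n$ by the Jacobian criterion. In particular every irreducible component of $J_{k}(X)$ has dimension $(k+1)n$, and since $J_{k}(X^{\mathrm{sm}})$ is open, smooth and geometrically irreducible of this dimension, its closure is always one such component. Hence, for each $k \geq 1$,
\[
J_{k}(X) \text{ is geometrically irreducible} \Longleftrightarrow \dim\bigl((\pi_{0,X}^{k})^{-1}(X^{\mathrm{sing}})\bigr) < (k+1)n.
\]
Moreover, lci implies Gorenstein, and for Gorenstein varieties rational singularities coincide with canonical singularities (Elkik, together with the classical converse for Gorenstein), so it suffices to prove that $X$ is canonical iff the displayed inequality holds for all $k \geq 1$.

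Next I would fix a log resolution $f : Y \to X$ with simple normal crossings exceptional divisor $E = \bigcup_{i=1}^{r} E_{i}$ and write the relative canonical divisor $K_{Y/X} = \sum_{i} a_{i} E_{i}$, with discrepancies $a_{i} \in \ints$ by the Gorenstein hypothesis; $X$ is canonical iff every $a_{i} \geq 0$. For $\nu = (\nu_{i}) \in \ints_{\geq 0}^{r}$, the multi-contact locus
\[
\mathrm{Cont}^{\nu}(\underline{E}) := \{\gamma \in J_{\infty}(Y) : \mathrm{ord}_{\gamma}(E_{i}) = \nu_{i} \text{ for all } i\}
\]
is smooth and irreducible of codimension $|\nu| := \sum_{i} \nu_{i}$ in $J_{\infty}(Y)$. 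The Denef--Loeser / Ein--Musta\c{t}\u{a}--Yasuda change-of-variables formula then computes the dimension of the image of $\mathrm{Cont}^{\nu}(\underline{E})$ truncated to $J_{k}(X)$: it equals $(k+1)n - \sum_{i} (1+a_{i})\nu_{i}$ whenever this number is non-negative and $|\nu| \leq k+1$, and the subscheme $(\pi_{0,X}^{k})^{-1}(X^{\mathrm{sing}})$ is covered by the (finitely many) such images whose $\nu$ is supported on $f^{-1}(X^{\mathrm{sing}})$.

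The two implications now fall out. If $X$ is canonical then all $a_{i} \geq 0$, so every relevant nonzero $\nu$ gives $(k+1)n - \sum_{i} (1+a_{i})\nu_{i} \leq (k+1)n - |\nu| < (k+1)n$; the finite union stays strictly below $(k+1)n$ and $J_{k}(X)$ is geometrically irreducible for every $k \geq 1$. Conversely, if some $a_{i_{0}} < 0$ then by integrality $a_{i_{0}} \leq -1$, so already the single index $\nu = e_{i_{0}}$ contributes dimension $\geq (k+1)n$ in $(\pi_{0,X}^{k})^{-1}(X^{\mathrm{sing}})$ for every $k \geq 1$, producing a second irreducible component of $J_{k}(X)$ distinct from $\overline{J_{k}(X^{\mathrm{sm}})}$. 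The main obstacle is the change-of-variables dimension formula for the image of a multi-contact locus; one must verify that it still applies when the target $X$ is singular, and this is precisely where the lci/Gorenstein hypothesis is essential -- both to keep $J_{k}(X)$ equidimensional of the expected dimension $(k+1)n$, and to force the $a_{i}$ to be integers so that $a_{i_{0}} < 0$ actually gives $a_{i_{0}} \leq -1$ and hence codimension $\leq 0$ for the counterexample $\nu = e_{i_{0}}$.
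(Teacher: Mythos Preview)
The paper does not prove this statement at all: it is quoted as a theorem of Musta\c{t}\u{a} \cite{Mus01} and used as a black box, so there is no ``paper's own proof'' to compare against. What you have written is a faithful outline of Musta\c{t}\u{a}'s original argument (reduction to the dimension of $(\pi_{0}^{k})^{-1}(X^{\mathrm{sing}})$ via equidimensionality of jets of an lci, the Elkik/Kempf equivalence of rational and canonical for Gorenstein varieties, and the contact-locus dimension formula coming from the birational transformation rule), and the logic is sound.

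Two places deserve a bit more care if you turn this into a full proof. First, the images of the multi-contact loci only cover those $k$-jets that lift to arcs on $Y$; you need to observe separately that the remaining $k$-jets (those whose lifts to $J_{\infty}(X)$ all lie in $J_{\infty}(X^{\mathrm{sing}})$) already have dimension $<(k+1)n$, which follows from the Denef--Loeser thinness of arcs not lifting through a proper birational map. Second, when $a_{i_{0}}\le -2$ your displayed formula would give dimension $>(k+1)n$, which is impossible; what actually happens is that the transformation rule in the singular target case must be stated via the Nash blow-up/Jacobian ideal rather than literally $K_{Y/X}$, and one checks that the resulting image still has dimension exactly $(k+1)n$, producing the extra component. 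You correctly flag this as the delicate point.
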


Using Theorem \ref{thm:rational singularities and jet schemes}, one
can obtain a similar characterization of (FRS) morphisms: 
\begin{prop}[{\cite[Corollary 3.12]{GHb} and \cite{Ish09}}]
\label{prop:jet scheme description of the (FRS) property}Let $X$
and $Y$ be smooth, geometrically irreducible $K$-varieties, and
let $\varphi:X\rightarrow Y$ be a $K$-morphism. 
\begin{enumerate}
\item Assume $\mathrm{char}(K)=0$. Then the morphism $\varphi$ is (FRS)
if and only if $J_{k}(\varphi)$ is flat, with locally integral fibers
for each $k\in\nats$. 
\item The morphism $\varphi$ is smooth if and only if $J_{k}(\varphi)$
is smooth for each $k\in\nats$. 
\end{enumerate}
\end{prop}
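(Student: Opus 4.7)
My plan is to reduce both parts of the proposition to standard functoriality of jet schemes, combined with Musta\c{t}\u{a}'s jet-scheme criterion for rational singularities (Theorem~\ref{thm:rational singularities and jet schemes}). The crucial technical identification, used throughout, is as follows: for any $y_0 \in Y$ with residue field $L$, the fiber of $J_k(\varphi)$ over the zero-jet $s_{k,Y}(y_0) \in J_k(Y)(L)$ is canonically isomorphic to $J_k(X_{y_0})$. This is immediate from the functor-of-points description in Definition~\ref{def:basic definition jet schemes}: an $L[t]/(t^{k+1})$-point of $X$ whose composition with $\varphi$ is the trivial jet at $y_0$ must factor through the fiber $X_{y_0}$, and conversely. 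Part (2) is now short: the ``$\Leftarrow$'' direction is immediate since $J_0(\varphi) = \varphi$; for ``$\Rightarrow$'', I use that $J_k$ preserves \'etale morphisms and finite products and satisfies $J_k(\mathbb{A}^d_K) = \mathbb{A}^{d(k+1)}_K$, so by factoring a smooth $\varphi$ \'etale-locally as an \'etale map to $Y \times_K \mathbb{A}^d_K$ followed by a projection and applying $J_k$, one realises $J_k(\varphi)$ \'etale-locally as a projection from $J_k(Y) \times_K \mathbb{A}^{d(k+1)}_K$, hence smooth.

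For the ``$\Leftarrow$'' direction of (1), setting $k = 0$ gives that $\varphi$ is flat. For each $y_0 \in Y$, the canonical identification together with the hypothesis shows $J_k(X_{y_0})$ is locally integral for all $k$; since $X$ and $Y$ are smooth and $\varphi$ is flat, the fiber $X_{y_0}$ is a local complete intersection. Applying Musta\c{t}\u{a}'s criterion (component-wise, after passing to an algebraic closure if necessary) then yields that $X_{y_0}$ has rational singularities, so $\varphi$ is (FRS).

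For the ``$\Rightarrow$'' direction, assume $\varphi$ is (FRS). Musta\c{t}\u{a}'s theorem applied fibrewise gives that $J_k(X_{y_0})$ is geometrically irreducible of dimension $(k+1)d$ (with $d = \dim X - \dim Y$), which via the canonical identification handles the fibers of $J_k(\varphi)$ lying above the image of the zero section $s_{k,Y} : Y \hookrightarrow J_k(Y)$. The main obstacle will be extending equidimensionality and local integrality to fibers over arbitrary jets $y \in J_k(Y)(L)$ not of the form $s_{k,Y}(y_0)$: such a fiber $J_k(\varphi)^{-1}(y)$ identifies with the Weil restriction $\mathrm{Res}_{L[t]/(t^{k+1})/L}(X_y)$ of the flat deformation $X_y := X \times_Y \spec(L[t]/(t^{k+1}))$ of $X_{y_0}\otimes_K L$. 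I would attack this via a deformation-theoretic comparison with the trivial deformation, using that \'etale-locally on the smooth $Y$ one may trivialize the infinitesimal neighborhood of $y_0$ so that $X_y$ becomes isomorphic to $X_{y_0} \otimes_K L[t]/(t^{k+1})$, or alternatively by invoking Ishii's results \cite{Ish09} on fibers of morphisms and their jets. Once all fibers of $J_k(\varphi)$ are shown to be equidimensional of dimension $(k+1)d$, miracle flatness applies---$J_k(Y)$ is regular and $J_k(X)$ is Cohen--Macaulay, both being smooth over $K$---giving flatness of $J_k(\varphi)$ and completing the proof.
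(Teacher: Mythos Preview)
The paper does not prove this proposition; it is cited without argument from \cite[Corollary 3.12]{GHb} and \cite{Ish09}, so there is no in-paper proof to compare against.

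Your sketch of part (2) and of the ``$\Leftarrow$'' direction of (1) is essentially correct. One small addition: Theorem~\ref{thm:rational singularities and jet schemes} asks for geometric irreducibility of $J_k(X_{y_0})$, whereas your hypothesis only gives local integrality; you should observe that $J_k(X_{y_0})$ is connected whenever $X_{y_0}$ is (the $\mathbb{G}_m$-action $t\mapsto\lambda t$ contracts $J_k$ onto the zero section, making every fiber of $\pi_0^k$ a cone with vertex on $s_k$), so locally integral forces integral on each connected component.

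The genuine gap is in the ``$\Rightarrow$'' direction of (1), and your proposed fix contains an error. You assert that \'etale-locally on $Y$ one has $X_\gamma\cong X_{y_0}\otimes_L L[t]/(t^{k+1})$. This is false: smoothness of $Y$ trivializes the infinitesimal neighborhood of $y_0$ \emph{in $Y$}, but says nothing about triviality of the family $X$ over it. For $\varphi(x,y,z)=xy-z^2$ (which is (FRS)) and $\gamma(t)=t$, the scheme $X_\gamma=\{xy-z^2=t\}$ is the nontrivial versal deformation of the $A_1$-singularity and is not isomorphic over $L[t]/(t^2)$ to $\{xy-z^2=0\}\otimes L[t]/(t^2)$; the translation automorphism of $\mathbb{A}^m_R$ you have in mind does not lift to $X_R$. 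A correct argument for \emph{flatness} of $J_k(\varphi)$ uses instead the $J_k(\varphi)$-equivariant $\mathbb{G}_m$-action contracting every $\gamma$ to $s_{k,Y}(\pi_0^k(\gamma))$: upper semicontinuity of fiber dimension then gives $\dim J_k(\varphi)^{-1}(\gamma)\le\dim J_k(X_{y_0})=(k+1)d$, and miracle flatness applies. But local integrality of the general fiber still requires a further argument, and your sketch does not supply one beyond deferring to \cite{Ish09}; this is exactly the nontrivial content the paper itself outsources.
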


\begin{rem}
\label{rem:jets of an affine scheme}Let $k$ be a natural number,
and $K$ be a field with $\mathrm{char}(K)=0$ or $\mathrm{char}(K)\gg1$
(in terms of $k$). Then the jet scheme $J_{k}(X)$ of an affine $K$-scheme
$X\subseteq\mathbb{A}^{n}$ has a simple description; write $X=\spec K[x_{1},\dots,x_{n}]/(f_{1},\dots,f_{l})$.
Then
\[
J_{k}(X)=\spec K[x_{1},\dots,x_{n},x_{1}^{(1)},\dots,x_{n}^{(1)},\dots,x_{1}^{(k)},\dots,x_{n}^{(k)}]/(\{f_{j}^{(u)}\}_{j=1,u=0}^{l,k}),
\]
where $f_{i}^{(u)}$ is the $u$-th formal derivative of $f_{i}$.
For example, if $f=x_{1}x_{2}^{2}$ then $f^{(1)}=x_{1}^{(1)}x_{2}^{2}+2x_{1}x_{2}x_{2}^{(1)}$.
Similarly, $J_{k}(\varphi)=(\varphi,\varphi^{(1)},...,\varphi^{(k)})$
for a morphism $\varphi:X\rightarrow Y$ of affine $K$-schemes. 
\end{rem}

The next proposition will be useful in Section \ref{sec:Number-theoretic-characterizatio}. 
\begin{prop}
\label{prop:useful lemma for jet-flat maps}Let $k\in\nats$ and let
$\varphi:X\rightarrow Y$ be $K$-morphism as in Proposition \ref{prop:jet scheme description of the (FRS) property},
with $\mathrm{char}(K)=0$ or $\mathrm{char}(K)\gg1$ (in terms of
$k$). Then $J_{k}(X)^{\mathrm{sm},J_{k}(\varphi)}=J_{k}(X^{\mathrm{sm},\varphi})$. 
\end{prop}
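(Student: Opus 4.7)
The plan is to translate the claimed equality of open subschemes of $J_k(X)$ into a pointwise smoothness equivalence, and then exploit the fact that when $X$ and $Y$ are smooth, the truncation maps $\pi_0^k : J_k(X) \to X$ and $\pi_0^k : J_k(Y) \to Y$ are themselves smooth morphisms (being Zariski-locally trivial affine bundles, as one reads off from Remark \ref{rem:jets of an affine scheme} combined with the compatibility of jet schemes with étale base change). Since $X^{\mathrm{sm},\varphi}$ is open in $X$ and $J_k$ commutes with open immersions, we already have $J_k(X^{\mathrm{sm},\varphi}) = (\pi_0^k)^{-1}(X^{\mathrm{sm},\varphi})$. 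Thus it suffices to prove, for each $\tilde{x} \in J_k(X)$ with image $x := \pi_0^k(\tilde{x}) \in X$, the equivalence
\[
J_k(\varphi) \text{ is smooth at } \tilde{x} \iff \varphi \text{ is smooth at } x.
\]

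For the direction $(\Leftarrow)$, I would pick an open neighborhood $U$ of $x$ on which $\varphi|_U$ is smooth and invoke Proposition \ref{prop:jet scheme description of the (FRS) property}(2) to conclude that $J_k(\varphi|_U) = J_k(\varphi)|_{J_k(U)}$ is smooth, placing $\tilde{x}$ in the smooth locus of $J_k(\varphi)$. For the more interesting direction $(\Rightarrow)$, the key input is the functoriality of the truncation,
\[
\varphi \circ \pi_0^k = \pi_0^k \circ J_k(\varphi).
\]
If $J_k(\varphi)$ is smooth at $\tilde{x}$, then composing with the smooth bundle projection $\pi_0^k : J_k(Y) \to Y$ shows that $\varphi \circ \pi_0^k$ is smooth at $\tilde{x}$, so by the chain rule $d\varphi_x \circ d(\pi_0^k)_{\tilde{x}}$ is surjective. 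Since $\pi_0^k : J_k(X) \to X$ is smooth, $d(\pi_0^k)_{\tilde{x}}$ is itself surjective, which forces $d\varphi_x : T_x X \to T_{\varphi(x)} Y$ to be surjective; smoothness of $X$ and $Y$ then yields that $\varphi$ is smooth at $x$.

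The main obstacle is justifying smoothness of the truncation maps $\pi_0^k$ in the smooth case, which is where the assumption that $X$ and $Y$ are smooth is used essentially; this reduces via étale-local charts to the case $J_k(\mathbb{A}^n) = \mathbb{A}^{n(k+1)} \to \mathbb{A}^n$, projection onto the constant term, which is manifestly smooth. An alternative and more concrete route would be a direct Jacobian computation in the affine model: writing $J_k(\varphi) = (\varphi^{(0)}, \ldots, \varphi^{(k)})$ as in Remark \ref{rem:jets of an affine scheme}, one checks that its Jacobian at $(x^{(0)}, \ldots, x^{(k)})$ is block lower triangular with all diagonal blocks equal to the Jacobian of $\varphi$ at $x^{(0)}$, yielding the same equivalence via linear algebra; this second approach is very explicit but leans on the characteristic hypothesis through Remark \ref{rem:jets of an affine scheme}, whereas the first does not.
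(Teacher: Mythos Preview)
Your argument is correct. The key observation that $\varphi\circ\pi_0^k=\pi_0^k\circ J_k(\varphi)$, combined with smoothness of both truncations $\pi_0^k$ on the smooth varieties $X$ and $Y$, gives the hard inclusion cleanly via the chain rule; note that in fact you only need that the image of $d\varphi_x\circ d(\pi_0^k)_{\tilde x}$ is contained in the image of $d\varphi_x$, so surjectivity of $d(\pi_0^k)_{\tilde x}$ on the source side is not even required.

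The paper's proof proceeds differently: it reduces to $Y=\mathbb{A}^m_K$ via an \'etale chart and then performs exactly the explicit Jacobian computation you sketch as your ``alternative route''. Writing $J_k(\varphi)=(\varphi,\varphi^{(1)},\dots,\varphi^{(k)})$ and including the defining equations of $X\subset\mathbb{A}^{n+l}$, the resulting Jacobian has a block upper triangular shape with diagonal blocks all equal to the Jacobian of $\varphi$ at the base point; full rank of the big matrix forces full rank of the $(0,0)$ block. So the paper's proof is your second approach, and it is precisely here that the characteristic hypothesis enters, through the differential--operator description of Remark~\ref{rem:jets of an affine scheme}. Your first argument, by contrast, only needs that $\pi_0^k$ is an affine bundle over a smooth base, a fact valid in arbitrary characteristic; in this sense your main proof is both more conceptual and slightly more general, though the explicit matrix picture has the virtue of being completely hands-on.
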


\begin{proof}
It follows from Proposition \ref{prop:jet scheme description of the (FRS) property}(2)
that $J_{k}(X^{\mathrm{sm},\varphi})\subseteq J_{k}(X)^{\mathrm{sm},J_{k}(\varphi)}$,
so it is left to show the other inclusion. We may assume that $X$
and $Y$ are affine, and that $Y$ admits an \'etale map $\psi:Y\rightarrow\mathbb{A}_{K}^{m}$.
We may further assume that $Y=\mathbb{A}_{K}^{m}$. Indeed, we have:
\[
J_{k}(X)^{\mathrm{sm},J_{k}(\varphi)}=J_{k}(X)^{\mathrm{sm},J_{k}(\psi\circ\varphi)}\text{ and }J_{k}(X^{\mathrm{sm},\psi\circ\varphi})=J_{k}(X^{\mathrm{sm},\varphi}).
\]
By Remark \ref{rem:jets of an affine scheme}, we can write $X=\mathrm{Spec}K[x_{1},...,x_{n+l}]/(f_{1},...,f_{l})$,
and 
\[
J_{k}(X)=\spec K[x_{1},\dots,x_{n+l},\dots,x_{1}^{(k)},\dots,x_{n+l}^{(k)}]/(\{f_{j}^{(u)}\}_{j=1,u=0}^{l,k}).
\]
Moreover $J_{k}(\varphi)=(\varphi,\varphi^{(1)},...,\varphi^{(k)})$
where $\varphi=(f_{l+1},...,f_{l+m}):X\rightarrow\mathbb{A}_{K}^{m}$.
Write $F_{u(l+m)+j}:=f_{j}^{(u)}$ and $X_{u(n+l)+i}:=x_{i}^{(u)}$,
and let $\overline{a}:=(a,a^{(1)},...,a^{(n+l)})\in J_{k}(X)$. Then
$J_{k}(\varphi)$ is smooth at $\overline{a}$ if and only if the
matrix $M=\left(\frac{\partial F_{j}}{\partial X_{i}}|_{\overline{a}}\right)_{i=1,j=1,}^{(n+l)(k+1),(l+m)(k+1)}$
is of full rank $(l+m)(k+1)$. Note that $M$ has the shape 
\[
M=\left(\begin{array}{cccc}
M_{00} & M_{01} & ... & M_{0k}\\
0 & M_{11} & ... & ...\\
... & ... & ... & ...\\
0 & ... & 0 & M_{kk}
\end{array}\right),
\]
where $M_{u_{1}u_{2}}=\left(\frac{\partial f_{j}^{(u_{2})}}{\partial x_{i}^{(u_{1})}}|_{\overline{a}}\right)_{i=1,j=1}^{(n+l),(l+m)}$
for $0\leq u_{1}\leq u_{2}\leq k$. If $M$ is of full rank, then
also $M_{00}=\left(\frac{\partial f_{j}}{\partial x_{i}}|_{a}\right)_{i=1,j=1}^{(n+l),(l+m)}$
must be of full rank, which in turn implies that $\varphi$ is smooth
at $a$, and the proposition follows. 
\end{proof}
\begin{rem}
The case $Y=\mathbb{A}^{1}$ of Proposition \ref{prop:useful lemma for jet-flat maps}
has essentially been proven in \cite[proof of Theorem 3.3]{EMY03}
and \cite[Proposition 4.12]{Mus01} (see also \cite[p.222]{Ish18}).
Proposition \ref{prop:useful lemma for jet-flat maps} also relates
to \cite[Questions 4.10 and 4.11]{Mus01}, as follows. Given a local
complete intersection variety $X$, it can be written, locally, as
a fiber $\widetilde{X}_{0,\varphi}$ of a flat morphism $\varphi:\widetilde{X}\rightarrow\mathbb{A}^{m}$,
with $\widetilde{X}$ smooth. If we assume that $J_{k}(\varphi)$
is flat for all $k$, then Proposition \ref{prop:useful lemma for jet-flat maps}
combined with \cite[III, Theorem 10.2]{Har77} implies that $(\pi_{0,\widetilde{X}_{0,\varphi}}^{k})^{-1}((\widetilde{X}_{0,\varphi})^{\mathrm{sm}})=J_{k}(\widetilde{X}_{0,\varphi})^{\mathrm{sm}}$
for all $k$, which gives a positive answer to \cite[Question 4.11]{Mus01}
in this case. If $J_{k}(\varphi)$ is not flat, one can still effectively
describe its smooth locus, but it is harder to describe the smooth
locus of its fibers. 
\end{rem}

\subsubsection{\label{subsec:E-smooth-and--jet flat}$E$-smooth and $\varepsilon$-jet
flat morphisms}

We next introduce several properties of morphisms between smooth varieties:
$\varepsilon$-flatness, $\varepsilon$-jet flatness, and $E$-smoothness.
The first two notions were first introduced in \cite{GHb}, whereas
the $E$-smoothness notion is new.
\begin{defn}
\label{def:epsilon jet flat}Let $X$ and $Y$ be smooth, geometrically
irreducible $K$-varieties, and let $\varphi:X\rightarrow Y$ be a
$K$-morphism, let $E\geq1$ be an integer and let $\varepsilon\in\reals_{>0}$.
Then:
\begin{enumerate}
\item $\varphi$ is called \emph{$\varepsilon$-flat} if for every $x\in X$
we have $\mathrm{dim}X_{\varphi(x),\varphi}\leq\mathrm{dim}X-\varepsilon\mathrm{dim}Y$. 
\item $\varphi$ is called \emph{$\varepsilon$-jet flat} (resp.~jet-flat)
if $J_{k}(\varphi)$ is $\varepsilon$-flat (resp.~flat) for every
$k\in\nats$. 
\item A jet-flat morphism $\varphi$ is called \emph{$E$-smooth} if for
all $k\in\ints_{\geq0}$ and all $\widetilde{x}\in J_{k}(X)$, the
set $(J_{k}(X)_{J_{k}(\varphi)(\widetilde{x}),J_{k}(\varphi)})^{\mathrm{sing}}$
is of codimension at least $E$ in $J_{k}(X)_{J_{k}(\varphi)(\widetilde{x}),J_{k}(\varphi)}$.
\end{enumerate}
\end{defn}

\begin{rem}
\label{rem:relating jet flatness}~
\begin{enumerate}
\item By \cite{Mus02}, a morphism $\varphi$ as in Definition \ref{def:epsilon jet flat} is $\varepsilon$-jet
flat if and only if $\mathrm{lct}(X,X_{\varphi(x),\varphi})\geq\varepsilon\mathrm{dim}Y$
for all $x\in X$, where $\mathrm{lct}(X,X_{\varphi(x),\varphi})$
is the log-canonical threshold of the pair $(X,X_{\varphi(x),\varphi})$. 
\item In addition, it follows from \cite{Mus01,EM04} (see \cite[Corollary 3.12]{GHb})
that if $\varphi$ is a normal morphism, then it is jet-flat if and
only if it is flat and has fibers with log-canonical singularities.
\end{enumerate}
\end{rem}

$\varepsilon$-flatness is a quantitative way to measure how close
a morphism between smooth varieties is to being flat. Similarly, $\varepsilon$-jet
flatness measures how close a morphism is to being jet-flat, which
is very close to being an (FRS)-morphism. On the other hand, the starting
point of $E$-smoothness is when $\varphi$ is jet-flat, and the larger
$E$ is, the better the singularities of $\varphi$ are. This is illustrated
in the next lemma:
\begin{lem}
\label{lem:1-smooth is (FRS)}Let $\varphi:X\rightarrow Y$ be $K$-morphism
between smooth, geometrically irreducible $K$-varieties.
\begin{enumerate}
\item $\varphi$ is $1$-smooth if and only if $\varphi$ is (FRS). 
\item $\varphi$ is $2$-smooth if and only if $\varphi$ is flat with fibers
of terminal singularities.
\end{enumerate}
\end{lem}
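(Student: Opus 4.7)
The plan is to deduce both parts from Serre's reducedness/normality criteria together with the jet-scheme characterizations of rational singularities (Musta\c{t}\u{a}, Theorem \ref{thm:rational singularities and jet schemes}) and of terminal singularities (Ein--Musta\c{t}\u{a} \cite{EM04}). Since (FRS) implies jet-flatness by Proposition \ref{prop:jet scheme description of the (FRS) property}(1) and $E$-smoothness includes jet-flatness by definition, I may assume throughout that $\varphi$ is jet-flat. Then $J_{k}(\varphi):J_{k}(X)\to J_{k}(Y)$ is flat between smooth $K$-varieties (as $J_{k}(X),J_{k}(Y)$ are smooth whenever $X,Y$ are), so every scheme-theoretic fiber of $J_{k}(\varphi)$ is a local complete intersection, hence Cohen--Macaulay. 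By Serre's criteria, such a fiber $W$ is reduced iff $W^{\mathrm{sing}}$ has codimension $\geq 1$ in $W$, and normal iff $W^{\mathrm{sing}}$ has codimension $\geq 2$ in $W$.

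For part (1), the direction (FRS) $\Rightarrow$ $1$-smooth is immediate from Proposition \ref{prop:jet scheme description of the (FRS) property}(1): locally integral fibers of $J_{k}(\varphi)$ are reduced, hence by the above have singular locus of codimension $\geq 1$. For the converse, assume $\varphi$ is $1$-smooth. Then each fiber of $J_{k}(\varphi)$ is reduced LCI. Specializing to the distinguished fibers over the zero section, which by Definition \ref{def:basic definition jet schemes}(4) are precisely the jet schemes $J_{k}(X_{y})$, gives that $J_{k}(X_{y})$ is reduced for all $k\geq 0$ and all $y\in Y$. Since an irreducible LCI with a smooth point is integral and, by jet-flatness, $X_{y}^{\mathrm{sm}}$ is nonempty, Musta\c{t}\u{a}'s criterion (Theorem \ref{thm:rational singularities and jet schemes}), in its equivalent ``reducedness of all $J_{k}$'' formulation, yields that each $X_{y}$ has rational singularities, so that $\varphi$ is (FRS).

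For part (2), I would run the same template, replacing Theorem \ref{thm:rational singularities and jet schemes} by the Ein--Musta\c{t}\u{a} characterization \cite{EM04}: an LCI variety $Z$ has terminal singularities iff $J_{k}(Z)$ is normal for every $k\geq 0$, which by Serre is equivalent to $\mathrm{codim}_{J_{k}(Z)}J_{k}(Z)^{\mathrm{sing}}\geq 2$. Combined with jet-flatness, $2$-smoothness thus translates to each $J_{k}(X_{y})$ being normal LCI for all $k,y$, that is, to each $X_{y}$ having terminal singularities, so that $\varphi$ is flat with fibers of terminal singularities (and conversely).

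The main obstacle is that $E$-smoothness imposes its codimension condition on \emph{every} fiber of $J_{k}(\varphi)$, whereas Musta\c{t}\u{a}/Ein--Musta\c{t}\u{a} a priori only apply to the distinguished fibers $J_{k}(X_{y})$ sitting over the zero section. I would handle this by means of Proposition \ref{prop:useful lemma for jet-flat maps}, which identifies the non-smooth locus of any fiber $W$ of $J_{k}(\varphi)$ with $W\cap(\pi_{0}^{k})^{-1}(X^{\mathrm{sing},\varphi})$. Since $(\pi_{0}^{k})^{-1}(X^{\mathrm{sing},\varphi})$ is determined by $\varphi$ alone and its codimension is preserved under the flat pullback $J_{k}(\varphi)$, a codimension bound on the distinguished fibers transfers to a uniform bound on every fiber, closing the equivalence.
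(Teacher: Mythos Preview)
Your overall strategy coincides with the paper's: reduce to the jet-flat case, note that fibers of $J_k(\varphi)$ are then LCI hence Cohen--Macaulay, and apply Serre's criteria to convert ``singular locus of codimension $\ge E$'' into ``reduced'' ($E=1$) or ``normal'' ($E=2$). Part~(1) is essentially fine as written; the only wrinkle is your appeal to a ``reducedness of all $J_k$'' formulation of Theorem~\ref{thm:rational singularities and jet schemes}, which is not what that theorem literally says. The paper fills exactly this step by combining Serre's criterion with \cite[Proposition~1.4]{Mus01}, and then uses both directions of Proposition~\ref{prop:jet scheme description of the (FRS) property}(1) (whose ``locally integral fibers'' clause already speaks about \emph{all} fibers of $J_k(\varphi)$) rather than passing through the distinguished fibers $J_k(X_y)$.

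The genuine gap is in your ``main obstacle'' paragraph, which you need for the direction terminal $\Rightarrow$ $2$-smooth in part~(2). The set $Z:=(\pi_0^k)^{-1}(X^{\mathrm{sing},\varphi})$ sits in the \emph{source} of $J_k(\varphi)$, so there is no ``flat pullback along $J_k(\varphi)$'' to invoke; what controls $\mathrm{codim}_W(W\cap Z)$ for a fiber $W$ is the fiber dimension of the restricted map $J_k(\varphi)|_Z$, and that is only upper semi-continuous --- it can jump, so a codimension bound on the distinguished fibers $J_k(X_y)$ does not propagate to arbitrary fibers $W$. The paper sidesteps this entirely by quoting \cite[Corollary~3.12(3)]{GHb}, which is a \emph{relative} form of the Ein--Musta\c{t}\u{a} criterion: $\varphi$ is flat with terminal fibers if and only if every fiber of $J_k(\varphi)$ is normal for every $k$. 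This is already a statement about all fibers of $J_k(\varphi)$, so Serre's criterion finishes immediately and no transfer from distinguished fibers is required; your absolute citation \cite{EM04} together with Proposition~\ref{prop:useful lemma for jet-flat maps} does not close this gap.
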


\begin{proof}
By Proposition \ref{prop:jet scheme description of the (FRS) property},
$\varphi$ is (FRS) if and only if $J_{k}(\varphi)$ is flat, with
locally integral fibers for each $k\in\nats$. By \cite[Corollary 3.12(3)]{GHb},
$\varphi$ is flat with fibers of terminal singularities if and only
if $J_{k}(\varphi)$ is flat, with normal fibers for each $k\in\nats$.
In particular, in the situation of (1) and (2), $\varphi$ is always
jet-flat, and thus the fibers of $J_{k}(\varphi)$ are local complete
intersection, and hence Cohen-Macaulay. Serre's criterion for normality
and reducedness \cite[Proposition 5.8.5, Theorem 5.8.6]{Gro66} and
\cite[Proposition 1.4]{Mus01} now imply Items (1) and (2).
\end{proof}

\subsection{\label{subsec:Motivic-functions}Motivic functions}

In this subsection we recall the definition and some properties of
motivic functions. In order to prove Theorem \ref{Thm A}, we encode
the collection $\left\{ \#\varphi^{-1}(y)\right\} _{p,k,y\in Y(\ints/p^{k}\ints)}$
using a single motivic function, and utilize this to obtain the desired
uniform bounds. We use the notion of motivic functions as was defined
and studied in \cite{CL08,CL10,CGH14,CGH16}. In order to fully exploit
the advantages of the motivic realm, we introduce the class of \emph{formally
non-negative motivic functions}, which is the specialization to local
fields of \cite[Section 5.3]{CL08}.

Throughout this subsection, we fix a number field $K$. We use the
(three-sorted) \textsl{Denef-Pas language, }denoted 
\[
\Ldp=(\mathcal{L}_{\mathrm{Val}},\mathcal{L}_{\mathrm{Res}},\mathcal{L}_{\mathrm{Pres}},\mathrm{\val},\mathrm{\ac}),
\]
where: 
\begin{enumerate}
\item The valued field sort $\VF$ is endowed with the language of rings
$\mathcal{L}_{\mathrm{Val}}$, with coefficients in $\mathcal{O}_{K}$. 
\item The residue field sort $\RF$ is endowed with the language of rings
$\mathcal{L}_{\mathrm{Res}}$. 
\item The value group sort $\VG$ (which we just call $\ints$), is endowed
with the Presburger language $\mathcal{L}_{\mathrm{Pres}}=(+,-,\leq,\{\equiv_{\mathrm{mod}~n}\}_{n>0},0,1)$
of ordered abelian groups along with constants $0,1$ and a family
of relations $\{\equiv_{\mathrm{mod}~n}\}_{n>0}$ of congruences modulo
$n$. 
\item $\val:\VF\backslash\{0\}\rightarrow\ints$ and $\ac:\VF\rightarrow\RF$
are two function symbols. 
\end{enumerate}
Let $\mathrm{Loc}$ be the collection of all non-Archimedean local
fields $F$ with a ring homomorphism $\mathcal{O}_{K}\rightarrow F$.
We denote by $\mathrm{Loc}_{0}$ (resp. $\mathrm{Loc}_{+}$) the collection
of all $F\in\mathrm{Loc}$ of characteristic zero (resp. positive
characteristic). For $F\in\mathrm{Loc}$, we denote by $\mathcal{O}_{F}$
its ring of integer, by $k_{F}$ its residue field, and by $q_{F}$
the number of elements in $k_{F}$. We use the notation $\mathrm{Loc}_{\gg}$\footnote{Our notation for $\mathrm{Loc}_{\gg}$ is slightly more restrictive
than the one used in \cite{CGH18}. Here $\mathrm{Loc}_{\gg}$ consists
of $\mathrm{Loc}_{0,\gg}\cup\mathrm{Loc}_{+,\gg}$ while in \cite{CGH18},
it consisted of $\mathrm{Loc}_{0}\cup\mathrm{Loc}_{+,\gg}$.} (resp. $\mathrm{Loc}_{0,\gg}$, $\mathrm{Loc}_{+,\gg}$), for the
collection of $F\in\mathrm{Loc}$ (resp. $\mathrm{Loc}_{0}$, $\mathrm{Loc}_{+}$)
with large enough residual characteristic (depending on some given
data).

Given $F\in\mathrm{Loc}$ (and a chosen uniformizer $\varpi_{F}$
of $\mathcal{O}_{F}$), we can interpret $\val$ and $\ac$ as the
valuation map $\val:F^{\times}\rightarrow\ints$ and the angular component
map $\ac:F\rightarrow k_{F}$, where $\ac(0)=0$ and $\ac(x)=x\cdot\varpi_{F}^{-\val(x)}\mod\,\varpi_{F}\mathcal{O}_{F}$
for $x\neq0$. Hence, any formula $\phi$ in $\Ldp$ with $n_{1}$
free $\VF$-variables, $n_{2}$ free $\RF$-variables and $n_{3}$
free $\ints$-variables, yields a subset $\phi(F)\subseteq F^{n_{1}}\times k_{F}^{n_{2}}\times\mathbb{Z}^{n_{3}}$.
A collection $X=(X_{F})_{F\in\mathrm{Loc}_{\gg}}$ with $X_{F}=\phi(F)$
is called an \textsl{$\mathcal{L}_{\mathrm{DP}}$-definable set}.
Given \textsl{$\mathcal{L}_{\mathrm{DP}}$-}definable sets $X$ and
$Y$, an \textsl{$\mathcal{L}_{\mathrm{DP}}$-}\textit{definable function}
is a collection $f=(f_{F}:X_{F}\rightarrow Y_{F})_{F\in\mathrm{Loc}_{\gg}}$
of functions whose collection of graphs is a definable set. We will
often say ``definable'' instead of \textsl{``$\mathcal{L}_{\mathrm{DP}}$}-definable''. 
\begin{defn}[{See \cite[Subsections 4.2.4-4.2.5]{CGH14}}]
\label{def:Presburger constructible functions}Let $X$ be an $\Ldp$-definable
set. A collection $f=(f_{F})_{F\in\mathrm{Loc}_{\gg}}$ of functions
$f_{F}:X_{F}\rightarrow\mathbb{R}$ is called a \textit{Presburger}\textit{\emph{
}}\emph{constructible}\textit{ function}, if it can be written as
\[
f_{F}(x)=\sum\limits _{i=1}^{N_{1}}q_{F}^{\alpha_{i,F}(x)}\prod\limits _{j=1}^{N_{2}}\beta_{ij,F}(x)\prod\limits _{j=1}^{N_{3}}\frac{1}{1-q_{F}^{a_{ij}}},
\]
where $N_{1},N_{2},N_{3}$ and $a_{il}$ are non-zero integers, and
$\alpha_{i},\beta_{ij}:X\to\ints$ are definable functions. Given
$f$ as above, set $\widetilde{f}_{F}:X_{F}\times\reals_{>1}\rightarrow\reals$
by 
\[
\widetilde{f}_{F}(x,s):=\sum\limits _{i=1}^{N_{1}}s^{\alpha_{i,F}(x)}\prod\limits _{j=1}^{N_{2}}\beta_{ij,F}(x)\prod\limits _{j=1}^{N_{3}}\frac{1}{1-s^{a_{ij}}}.
\]
We say that $f$ is \emph{formally non-negative} if $\widetilde{f}_{F}$
takes non-negative values for every $F\in\mathrm{Loc}_{\gg}$. We
denote by $\mathcal{P}(X)$ the ring of Presburger constructible functions
on $X$, and by $\mathcal{P}_{+}(X)$ the sub-semiring of formally
non-negative functions. 
\end{defn}

\begin{defn}
\label{def:motivic functions}Let $X$ be an $\Ldp$-definable set.
A collection $h=(h_{F})_{F\in\mathrm{Loc}_{\gg}}$ of functions $h_{F}:X_{F}\rightarrow\mathbb{R}$
is called a \textit{motivic}\textit{\emph{ }}\emph{function}, if it
can be written as: 
\[
h_{F}(x)=\sum\limits _{i=1}^{N}\#Y_{i,F,x}\cdot f_{i}{}_{F}(x),
\]
where: 
\begin{itemize}
\item $Y_{i,F,x}=\{\xi\in k_{F}^{r_{i}}:(x,\xi)\in Y_{i,F}\}$ is the fiber
over $x\in X_{F}$ of a definable set $Y_{i}\subseteq X\times\mathrm{RF}^{r_{i}}$
with $r_{i}\in\nats$. 
\item Each $f_{i}$ is a Presburger constructible function. 
\end{itemize}
If furthermore every $f_{i}$ is formally non-negative, then we call
$h$ a \textit{formally }\emph{non-negative}\textit{ motivic}\textit{\emph{
}}\emph{function. }We denote by $\mathcal{C}(X)$ the ring of motivic
functions on $X$, and by $\mathcal{C}_{+}(X)$ the sub-semiring of
formally non-negative motivic functions. 
\end{defn}

The classes $\mathcal{C}(X)$ and $\mathcal{C}_{+}(X)$ defined above
are the specialization to local fields of more abstract classes of
motivic functions defined in \cite[Section 5]{CL08} (e.g. see the
discussion in \cite[Section 4.2]{CGH14}). In \cite[Theorem 10.1.1]{CL08},
it is shown that these more general classes are preserved under a
formal integration operation, and in \cite[Section 9]{CL10} it is
shown that this formal integration operation commutes with usual $p$-adic
integration under specialization. This implies the following theorem: 
\begin{thm}
\label{integration of motivic}Let $X$ be an $\Ldp$-definable set,
and let $f$ be in $\mathcal{C}_{+}(X\times\mathrm{VF}^{m})$. Assume
that for every $F\in\mathrm{Loc}_{\gg}$ and every $x\in X_{F}$,
the function $y\mapsto f_{F}(x,y)$ belongs to $L^{1}(F^{m})$. Then
there exists $g$ in $\mathcal{C}_{+}(X)$ such that 
\begin{equation}
g_{F}(x)=\int_{y\in F^{m}}f_{F}(x,y)\left|dy\right|.\label{eq:integration}
\end{equation}
\end{thm}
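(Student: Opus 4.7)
The plan is to reduce this to the motivic integration machinery of Cluckers--Loeser, and then to verify that formal non-negativity is preserved by specialization of formal motivic integration to $p$-adic integration. Concretely, I would proceed as follows.

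First, I would invoke the abstract motivic integration theorem \cite[Theorem 10.1.1]{CL08}, which, given a motivic constructible function $f$ on $X \times \mathrm{VF}^{m}$ that is motivic-integrable along the $\mathrm{VF}^{m}$-fibers, produces a motivic constructible function $\mu(f)$ on $X$ representing its fiberwise integral. Next, I would invoke \cite[Section~9]{CL10} (see also the local-field formulation in \cite[Section~4.2]{CGH14}), which states that for $F \in \mathrm{Loc}_{\gg}$, this formal integration specializes to the usual Haar integration on $F^{m}$. Together, these give the existence of a candidate $g \in \mathcal{C}(X)$ satisfying the identity in \eqref{eq:integration}, modulo two points that must still be checked: (a) that the fiberwise $L^{1}(F^{m})$ hypothesis suffices to apply the formal integration theorem uniformly over $\mathrm{Loc}_{\gg}$, and (b) that the resulting $g$ lies in the sub-semiring $\mathcal{C}_{+}(X)$, not merely in $\mathcal{C}(X)$.

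For (a), I would use the characterization of motivic integrability in terms of absolute integrability of the absolute value $|f|$, which by the specialization theorem is detected on large enough local fields; the uniform pointwise $L^{1}$-hypothesis together with a transfer-type argument (as in \cite{CGH14,CGH16}) is enough to ensure integrability of $f$ in the sense required by \cite[Theorem 10.1.1]{CL08}. This part is essentially bookkeeping.

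For (b), which is the real technical core, I would trace through the construction of $\mu$ by induction on $m$, reducing to the case $m = 1$ via Fubini (which itself preserves $\mathcal{C}_{+}$, being an iteration). In the one-variable case, the integration is computed via Denef--Pas cell decomposition: on a cell parameterized by a center, a valuation condition and residue-field data, the integral of a motivic function takes the explicit form of summing, over the value group variables, expressions of the shape $q_F^{\alpha(x)} \prod_j \beta_{ij,F}(x)$ times a Haar measure factor coming from the cell. Summing the resulting formally non-negative Presburger constructible pieces over $\ints$ with positive ratios gives outputs involving factors $1/(1 - q_F^{-n})$ with $n \ge 1$, which are formally non-negative by definition (the series $\sum_{k \ge 0} s^{-nk}$ has non-negative coefficients in $s$). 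Thus each elementary integration step sends $\mathcal{C}_{+}$ into $\mathcal{C}_{+}$, and the result for $\mathrm{VF}^{m}$ follows by induction. This is precisely the local-field shadow of the positivity statement at the level of $\mathcal{C}_{+}$-motivic functions in \cite[Section 5.3]{CL08}.

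The main obstacle, as indicated above, is step (b): one has to carefully verify that the cell-decomposition--based integration formula does not introduce sign cancellations when restricted to the formally non-negative semiring. Once positivity on each cell is confirmed, finite sums and inductive application of Fubini give $g \in \mathcal{C}_{+}(X)$ representing the desired integral, completing the proof.
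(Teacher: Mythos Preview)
Your proposal is correct and follows essentially the same route as the paper: invoke \cite[Theorem 10.1.1]{CL08} for the abstract motivic integration and \cite[Section 9]{CL10} for its specialization to $p$-adic integration. The paper does not give a proof beyond these two citations; in particular, your step (b) is unnecessary as a separate verification, since \cite[Theorem 10.1.1]{CL08} is already stated and proved at the level of the positive semiring $\mathcal{C}_{+}$ (as the paper notes explicitly in the paragraph preceding the theorem), so positivity preservation under integration is part of what is being cited rather than something you need to re-derive via cell decomposition.
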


\begin{rem}
\label{rem:stronger formulation of Theorem 2.8}In \cite[Theorem 4.3.1]{CGH14}
it was shown that the class of motivic functions is preserved under
integration in the following stronger sense, namely, that given $f$
in $\mathcal{C}(X\times\mathrm{VF}^{m})$, one can find $g\in\mathcal{C}(X)$
such that for every $F\in\mathrm{Loc}_{\gg}$ and $x\in X_{F}$, if
$y\mapsto f_{F}(x,y)$ belongs to $L^{1}(F^{m})$ then (\ref{eq:integration})
holds. This stronger statement relies on an interpolation theorem
\cite[Theorem 4.3.3]{CGH14} for functions in $\mathcal{C}(X)$. It
would be interesting to prove a similar interpolation result for the
class of formally non-negative motivic functions. This will imply
the stronger formulation of Theorem \ref{integration of motivic}
as in \cite[Theorem 4.3.1]{CGH14}. 
\end{rem}

Finally, we need the following transfer result between $\mathrm{Loc}_{0,\gg}$
and $\mathrm{Loc}_{+,\gg}$. 
\begin{thm}[{{{Transfer principle for bounds, \cite[Theorem 3.1]{CGH16}}}}]
\label{thm:-transfer principle for bounds}Let $X$ be an $\mathcal{L}_{\mathrm{DP}}$-definable
set, and let $H,G\in\mathcal{C}(X)$ be motivic functions. Then the
following holds for $F\in\mathrm{Loc}_{\gg}$; if 
\[
\left|H_{F}(x)\right|\leq\left|G_{F}(x)\right|,
\]
for each $x\in X_{F}$, then also 
\[
\left|H_{F'}(x)\right|\leq\left|G_{F'}(x)\right|,
\]
for every $F'\in\mathrm{Loc}$ with the same residue field as $F$,
and each $x\in X_{F'}$. 
\end{thm}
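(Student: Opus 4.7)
The plan is to derive this boundedness transfer from the transfer principle for identities of motivic constructible functions (Cluckers--Loeser). The first move is to rephrase the inequality as a single-function positivity statement: since $\mathcal{C}(X)$ is a ring, the function $Q := G^{2}-H^{2}$ lies in $\mathcal{C}(X)$, and the hypothesis $|H_{F}(x)|\leq |G_{F}(x)|$ on $X_{F}$ is equivalent to $Q_{F}(x)\geq 0$ for all $x\in X_{F}$. Thus it suffices to show that the property \emph{``a given motivic function is pointwise non-negative on a definable set''} transfers between $F$ and any $F'\in\mathrm{Loc}$ with $k_{F}\cong k_{F'}$, provided the residue characteristic is large enough.

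The second step is to apply a Denef--Pas style cell decomposition, refined using the preparation results of \cite{CL08}, so that on each cell $C$ the function $Q$ takes the standard explicit form
\[
Q|_{C}(x)=\sum_{i=1}^{N} q_{F}^{\alpha_{i}(x)}\beta_{i}(x)\cdot\#Y_{i,F,x},
\]
where $\alpha_{i},\beta_{i}\colon C\to\ints$ are $\Ldp$-definable and each $Y_{i}\subseteq C\times\mathrm{RF}^{r_{i}}$ is a definable residue-field family. Whether $Q_{F}(x)\geq 0$ then depends only on the integer data $\alpha_{i}(x),\beta_{i}(x)$ and the counts $\#Y_{i,F,x}$, with $q_{F}$ playing the role of a large formal variable.

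The third step is the sign analysis. By refining the cell decomposition further so as to linearly order the values $\alpha_{i}(x)$ (an $\Ldp$-definable refinement on the value-group sort), one reduces the sign of $\sum_{i} q_{F}^{\alpha_{i}(x)}\beta_{i}(x)\cdot \#Y_{i,F,x}$, for $q_{F}\gg 1$, to the sign of the coefficient of the dominant $q_{F}$-power, i.e.\ to an integer-valued expression in the $\beta_{i}(x)$ and $\#Y_{i,F,x}$. Non-negativity of $Q$ on $X_{F}$ thus becomes a Boolean combination of $\Ldp$-sentences involving only the residue-field and value-group sorts. By the Ax--Kochen--Ershov transfer principle (in the Pas version for $\Ldp$), such sentences have the same truth value in $F$ and in any $F'\in\mathrm{Loc}$ sharing the same residue field and having sufficiently large residual characteristic, which yields the desired conclusion.

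The main obstacle is step two: arranging the cell decomposition to be fine enough that $Q$ becomes \emph{uniformly} monomial-like in $q_{F}$ on each cell, so that step three's dominant-term analysis is captured by a single $\Ldp$-formula parametrized by $x$. This is where I expect to rely most heavily on the rectilinearization/preparation machinery of \cite{CL08,CL10}, since the interaction between the residue-field counts $\#Y_{i,F,x}$ (which can themselves grow with $q_{F}$) and the explicit $q_{F}$-powers is precisely what prevents a naive reduction to Presburger transfer.
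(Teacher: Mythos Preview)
The paper does not prove this theorem; it is quoted verbatim from \cite[Theorem 3.1]{CGH16} and used as a black box. So there is no ``paper's own proof'' to compare against, and your proposal should be read as an attempt to reconstruct the argument from \cite{CGH16}.

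Your outline is broadly along the right lines --- the reduction to $Q=G^{2}-H^{2}\geq 0$ is correct, and the endgame via Ax--Kochen--Ershov/Pas transfer is the right mechanism --- but step three has a real gap that you yourself flag without resolving. After ordering the exponents $\alpha_{i}(x)$, the sign of the dominant term is governed by an expression of the form $\sum_{i}\beta_{i}(x)\cdot\#Y_{i,F,x}$, and the counts $\#Y_{i,F,x}$ are \emph{not} values of $\Ldp$-definable functions: they are cardinalities of residue-field definable sets, and they themselves grow like $c_{i}q_{F}^{d_{i}}$ with error $O(q_{F}^{d_{i}-1/2})$. So ``the sign of the leading coefficient'' is not an $\Ldp$-formula in $x$, and you cannot pass directly to AKE. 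The rectilinearization and preparation results you invoke from \cite{CL08,CL10} act on the value-group side; they do nothing to tame the residue-field counts.

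What is actually needed at this point is a definable form of Lang--Weil (as in \cite{CvdDM92}, used elsewhere in the present paper) to partition $X$ so that on each piece the dimension and leading constant of every $Y_{i,F,x}$ are fixed; only then does the dominant-term comparison become a genuine $\Ldp$-condition, uniformly in $x$ and for $q_{F}$ large. Alternatively, and closer to how \cite{CGH16} proceeds, one appeals to the Cluckers--Loeser transfer principle for zero loci and positivity loci of motivic constructible functions (from \cite{CL10}), which packages exactly this reduction. Either way, you should name the tool that bridges residue-field counts and definability; ``rectilinearization/preparation'' is the wrong pointer here.
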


\section{An improvement of the approximation of suprema}

The main goal of this section is to show the following improvement
of \cite[Theorem 2.1.3]{CGH18} on approximate suprema. This improvement
is made possible by placing ourselves in the special case of formally
non-negative motivic functions and is not possible in the more general
situation of \cite{CGH18}. 
\begin{thm}[Improved approximation of suprema]
\label{thm:improved supremum theorem}Let $f$ be in $\mathcal{C}_{+}(X\times W)$,
where $X$ and $W$ are definable sets. Then there exist a constant
$C>0$, and a function $G\in\mathcal{C}_{+}(X)$ such that for any
$F\in\mathrm{Loc}_{\gg}$ and any $x\in X_{F}$ such that $w\mapsto f_{F}(x,w)$
is bounded on $W_{F}$, we have 
\[
\underset{w\in W_{F}}{\sup}f_{F}(x,w)\leq G_{F}(x)\leq C\cdot\underset{w\in W_{F}}{\sup}f_{F}(x,w).
\]
\end{thm}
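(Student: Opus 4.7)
The plan is to bootstrap from the weaker general approximation theorem \cite[Theorem 2.1.3]{CGH18}, which for an arbitrary $f \in \mathcal{C}(X \times W)$ yields a function $G' \in \mathcal{C}(X)$ with $\sup_{w \in W_F} |f_F(x,w)|^2 \leq G'_F(x) \leq q_F^{C}\cdot \sup_{w \in W_F} |f_F(x,w)|^2$. The task is to gain two features simultaneously from the hypothesis $f \in \mathcal{C}_+(X \times W)$: first, to remove the squaring, so that $G$ approximates $\sup f$ rather than $\sup f^2$; and second, to replace the factor $q_F^C$ by a constant $C$ independent of the residue field size $q_F$. Both improvements are known to fail without the formally non-negative hypothesis, as the discussion in Subsection \ref{subsec:Optimality-of-the bounds} indicates, so the formally non-negative structure has to be exploited in an essential way.

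The main step is a Denef--Pas cell decomposition in the $W$-variables, following \cite{CL08,CGH14}, which writes $f$ as a finite sum $\sum_{i} \mathbf{1}_{C_{i,F}}(x,w)\cdot f_{i,F}(x,w)$, where each $C_i$ is a definable cell and each $f_i$ takes a standard shape: a monomial $q_F^{\alpha_i(x,w)}$ times a product of integer-valued definable factors $\beta_{ij}(x,w)$ times a residue-field counting factor $\#Y_{i,F,x,w}$. Because $f$ lies in $\mathcal{C}_+$, one should refine the decomposition so that each individual piece $\mathbf{1}_{C_{i,F}} f_i$ is itself formally non-negative in the sense of Definition \ref{def:Presburger constructible functions}; this is the step where non-negativity prevents sign cancellation across cells. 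Given such a decomposition, the supremum over $w$ of each piece reduces, after projecting $C_i$ onto its integer-parameter base, to the supremum of a Presburger-constructible function in $\mathcal{P}_+$ of the cell's $\ints$-parameters, with the residue-field counting factor bounded cell-by-cell by its own generic value. For $\mathcal{P}_+$, a tight Presburger analog of the theorem is available: one obtains an element of $\mathcal{P}_+$ approximating the supremum up to a universal multiplicative constant, via the explicit shape in Definition \ref{def:Presburger constructible functions} together with Presburger cell decomposition.

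The output $G \in \mathcal{C}_+(X)$ is then assembled as the sum of the per-cell approximate suprema, so $\sup_w f_F(x,w) \leq G_F(x)$ by non-negativity and $G_F(x) \leq C \cdot \sup_w f_F(x,w)$ because each piece is bounded by some cell-attaining value of $f_F$; the constant $C$ emerges as a product of the number of cells and the Presburger-sup approximation constant, hence depends only on the $\Ldp$-complexity of $f$ and not on $F$ or $x$. The main obstacle is the cell-decomposition step: producing a normal form in which every individual summand, not merely the total sum, is formally non-negative. This is precisely the additional content that $\mathcal{C}_+$ provides over pointwise non-negativity, and it explains why \cite[Theorem 2.1.3]{CGH18} had to pay the factor $q_F^C$: without a non-negativity-preserving decomposition one must estimate each signed term by a brute-force power of $q_F$, and absorb the potential cancellation into that factor.
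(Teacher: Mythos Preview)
Your overall strategy aligns with the paper's: reduce via cell decomposition, separate out a Presburger part and a residue-field part, approximate each, and reassemble. However, the sketch has two real gaps.

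First, the opening claim that you will ``bootstrap from \cite[Theorem 2.1.3]{CGH18}'' is never carried out; neither your sketch nor the paper's proof uses that result as input. The argument proceeds by direct decomposition, so this framing is a red herring.

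Second, and more substantively, the step ``the residue-field counting factor [is] bounded cell-by-cell by its own generic value'' is where the real content lies, and your sketch does not supply it. After reducing to $X,W\subseteq\RF^{\bullet}$ and a single term $u\cdot\#Y_{\eta,\xi}$, you need a function in $\mathcal{C}_{+}(X)$ sandwiching $\sup_{\xi}\#Y_{\eta,\xi}$ up to a constant \emph{independent of $q_F$}. A bare dimension bound $\#Y_{\eta,\xi}\le q_F^{d}$ gives only one inequality, and the ratio between the crude bound and the actual supremum can be an unbounded power of $q_F$ if the fiber dimension jumps. The paper resolves this with the definable Lang--Weil estimates of \cite{CvdDM92}: one partitions $X\times W$ into definable pieces $A_i$ on which $\#Y_{\eta,\xi}$ is within a constant factor of $\tfrac{l_{i1}}{l_{i2}}q_F^{d_i}$, then partitions $X$ according to which $A_i$ are hit, and on each stratum takes $G_F(\eta)=u_F\sum_{i\in I}2l_{i1}q_F^{d_i}$. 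This is the step that produces a constant rather than $q_F^{C}$, and it does not follow from a vague appeal to ``generic value''.

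A smaller omission: you do not say how to eliminate the valued-field variables of $X$. The paper notes explicitly (in the remark preceding the proof) that a naive quantifier elimination would mix the $\VF$-variables of $X$ and $W$, obstructing the supremum over $W$; it therefore first cell-decomposes in the $W$-variables alone, and only afterwards applies quantifier elimination in the $X$-variables via polynomials $g_1,\dots,g_l$ in the $X$-coordinates only, factoring through a map $\rho(x)=(\eta,\ac g_j(y),t,\val g_j(y))$. Your sketch collapses these into a single ``Denef--Pas cell decomposition in the $W$-variables'', which does not by itself remove the $\VF$-variables of $X$.
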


The following lemma is immediate: 
\begin{lem}
\label{lem:supremum sandwich}Let $\{f_{i}\}_{i=1}^{N}$ be in $\mathcal{C}_{+}(X\times W)$
and set $f=\stackrel[i=1]{N}{\sum}f_{i}$. Then for $F\in\mathrm{Loc}_{\gg}$,
one has: 
\[
\frac{1}{N}\sum_{i=1}^{N}\underset{w\in W_{F}}{\sup}f_{i}{}_{F}(x,w)\leq\underset{w\in W_{F}}{\sup}f_{F}(x,w)\leq\sum_{i=1}^{N}\underset{w\in W_{F}}{\sup}f_{i}{}_{F}(x,w).
\]
\end{lem}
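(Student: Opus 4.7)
The plan is to reduce the statement to an elementary inequality about finite sums of non-negative real numbers. The key preliminary observation is that membership in $\mathcal{C}_{+}(X\times W)$ guarantees that each specialization $f_{i,F}:X_{F}\times W_{F}\to\reals$ takes only non-negative values for $F\in\mathrm{Loc}_{\gg}$: by Definition \ref{def:motivic functions}, $f_{i,F}$ is a sum of products of cardinalities $\#Y_{i,F,x}$ (which are automatically $\ge 0$) with formally non-negative Presburger constructible functions, and formal non-negativity in Definition \ref{def:Presburger constructible functions} is designed precisely to force pointwise non-negativity after specialization (one applies it with the parameter $s=q_F$). Consequently both $f_{F}$ and each $f_{i,F}$ are $\reals_{\ge 0}$-valued, and the lemma becomes a purely pointwise assertion about sums and suprema of non-negative real functions on $W_{F}$.

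Fixing $F\in\mathrm{Loc}_{\gg}$ and $x\in X_{F}$, I would prove the upper inequality by subadditivity of supremum: for every $w\in W_{F}$,
\[
f_{F}(x,w)=\sum_{i=1}^{N}f_{i,F}(x,w)\leq\sum_{i=1}^{N}\underset{w'\in W_{F}}{\sup}f_{i,F}(x,w'),
\]
and since the right-hand side is independent of $w$, taking $\sup_{w\in W_{F}}$ on the left yields the desired bound (this step does not even require non-negativity).

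For the lower inequality I would use non-negativity in an essential way: for each fixed index $i$, the inequality $f_{j,F}(x,w)\ge 0$ for all $j\ne i$ gives $f_{i,F}(x,w)\le f_{F}(x,w)$ for every $w\in W_{F}$, hence
\[
\underset{w\in W_{F}}{\sup}f_{i,F}(x,w)\leq\underset{w\in W_{F}}{\sup}f_{F}(x,w).
\]
Summing these $N$ inequalities and dividing by $N$ produces exactly the left-hand bound in the lemma. There is no real obstacle here; the only substantive input is the pointwise non-negativity of elements of $\mathcal{C}_{+}$, after which the two inequalities are the standard sandwich for suprema of finite sums of non-negative functions.
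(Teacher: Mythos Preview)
Your proof is correct and matches the paper's approach; in fact the paper does not prove this lemma at all, simply declaring it ``immediate'', and what you wrote is precisely the immediate argument one has in mind (subadditivity of $\sup$ for the right inequality, pointwise domination $f_{i,F}\le f_F$ from non-negativity for the left).
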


Let $f$ be in $\mathcal{C}_{+}(X\times W)$. By Definition \ref{def:motivic functions},
we can write $f(x,w)=\stackrel[i=1]{N}{\sum}\#Y_{i,x,w}\cdot g_{i}(x,w)$,
where $g_{i}\in\mathcal{P}_{+}(X\times W)$ and $Y_{i}\subseteq X\times W\times\mathrm{RF}^{r_{i}}$.
Lemma \ref{lem:supremum sandwich} thus implies the following: 
\begin{cor}
\label{cor:definable partition}Let $f$ be in $\mathcal{C}_{+}(X\times W)$,
where $X$ and $W$ are definable sets. 
\begin{enumerate}
\item Let $X\times W=\bigsqcup\limits _{i=1}^{M}C_{i}$ be a definable partition
and set $f_{i}(x,w)=f(x,w)\cdot1_{C_{i}}$. Then it is enough to prove
Theorem \ref{thm:improved supremum theorem} for each $f_{i}$. 
\item It is enough to prove Theorem \ref{thm:improved supremum theorem}
for $f$ of the form $f=\#Y_{x,w}\cdot g(x,w)$ where $g\in\mathcal{P}_{+}(X\times W)$. 
\end{enumerate}
\end{cor}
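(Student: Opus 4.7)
The plan is to deduce both parts of the corollary directly from Lemma \ref{lem:supremum sandwich}, which provides a two-sided sandwich $\frac{1}{N}\sum_i \sup f_{i,F} \le \sup(\sum_i f_{i,F}) \le \sum_i \sup f_{i,F}$ for any finite sum of formally non-negative motivic functions. The key structural fact I will exploit is that $\mathcal{C}_+$ is closed under finite sums and products: closure under sums is part of the definition, and closure under products reduces to closure of $\mathcal{P}_+$ under products, which in turn follows from the observation that the auxiliary assignment $f \mapsto \widetilde f$ of Definition \ref{def:Presburger constructible functions} respects multiplication, and a pointwise product of functions that are non-negative for every $s > 1$ is again non-negative.

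For part (1), I would first check that each truncation $f_i = f \cdot 1_{C_i}$ lies in $\mathcal{C}_+(X\times W)$. The indicator $1_{C_i}$ is itself a formally non-negative motivic function (realize it as $\#Y_i$ with $Y_i = C_i \times \{0\} \subseteq (X\times W)\times\mathrm{RF}$, whose Presburger coefficient is the constant $1$), and the product $f \cdot 1_{C_i}$ stays in $\mathcal{C}_+$ by the closure property above. Since $0 \le f_i \le f$ pointwise and $f = \sum_{i=1}^M f_i$ is a finite sum, the boundedness hypothesis for $w\mapsto f_F(x,w)$ is equivalent to simultaneous boundedness of all $w\mapsto f_{i,F}(x,w)$. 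Granting Theorem \ref{thm:improved supremum theorem} for each $f_i$, select $G_i \in \mathcal{C}_+(X)$ and $C_i > 0$ witnessing the approximation. Then $G := \sum_{i=1}^M G_i \in \mathcal{C}_+(X)$ and $C := M\cdot\max_i C_i$ do the job: the upper bound in Lemma \ref{lem:supremum sandwich} gives
\[
\sup_{w \in W_F} f_F(x,w) \;\le\; \sum_{i=1}^M \sup_{w\in W_F} f_{i,F}(x,w) \;\le\; \sum_{i=1}^M G_{i,F}(x) \;=\; G_F(x),
\]
and combining the individual approximations with the lower bound $\sum_i \sup_w f_{i,F} \le M \sup_w f_F$ of Lemma \ref{lem:supremum sandwich} yields
\[
G_F(x) \;=\; \sum_i G_{i,F}(x) \;\le\; \max_i C_i \cdot \sum_i \sup_{w\in W_F} f_{i,F}(x,w) \;\le\; C \cdot \sup_{w\in W_F} f_F(x,w).
\]

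For part (2), by Definition \ref{def:motivic functions} an arbitrary $f \in \mathcal{C}_+(X\times W)$ admits a representation $f = \sum_{i=1}^N \#Y_{i,x,w} \cdot g_i(x,w)$ with $g_i \in \mathcal{P}_+(X\times W)$, and each summand already has the special form of (2). Applying verbatim the same aggregation argument as in part (1) — replacing the cut-offs $f \cdot 1_{C_i}$ by the summands $\#Y_{i,x,w}\cdot g_i(x,w)$ and again invoking Lemma \ref{lem:supremum sandwich} — assembles approximants for each summand into a single $G \in \mathcal{C}_+(X)$ approximating $\sup_w f_F$ up to a constant depending on $N$ and the individual constants.

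No real obstacle arises; the content of the corollary is essentially bookkeeping around Lemma \ref{lem:supremum sandwich}. The only subtlety worth highlighting is the verification that multiplying by definable indicators $1_{C_i}$ preserves formal non-negativity, which is why the reduction lives entirely inside the semiring $\mathcal{C}_+$ and not merely in the ring $\mathcal{C}$.
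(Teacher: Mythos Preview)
Your proof is correct and follows essentially the same approach as the paper, which simply states that the corollary follows from Lemma~\ref{lem:supremum sandwich} together with the decomposition $f=\sum_i \#Y_{i,x,w}\cdot g_i$ from Definition~\ref{def:motivic functions}. You supply additional (and correct) detail---in particular the verification that $f\cdot 1_{C_i}\in\mathcal{C}_+(X\times W)$ and the explicit aggregation $G=\sum_i G_i$, $C=M\cdot\max_i C_i$---but the underlying argument is identical.
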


\begin{rem}
The key case of Theorem \ref{thm:improved supremum theorem} is when
neither $X$ nor $W$ involve valued field variables. The reduction
to this case needs to be done with care. Naively, one can use quantifier
elimination to eliminate the valued field variables, but this is problematic
since it mixes the valued field variables of $X$ and $W$, making
it hard to take supremum over the variables of $W$. In order to elude
this problem, we will apply cell decomposition iteratively, first
taking care of the $W$ variables and then taking care of the $X$
variables. 
\end{rem}

\begin{proof}[Proof of Theorem \ref{thm:improved supremum theorem}]
Let $f(x,w)=\#Y_{x,w}\cdot g(x,w)$ for some $g\in\mathcal{P}_{+}(X\times W)$
and $Y\subseteq X\times W\times\mathrm{RF}^{r}$. Without loss of
generality, we may assume that $X=\VF^{n_{1}}\times\RF^{n_{2}}\times\VG^{n_{3}}$
and $W=\VF^{m_{1}}\times\RF^{m_{2}}\times\VG^{m_{3}}$ for some $n_{i}\ge0$
and $m_{i}\ge0$. We will first reduce to the case where there are
no valued field variables, using the following claim.

\begin{claim}\label{claim1}

We may assume that $X=\RF^{n_{2}}\times\VG^{n_{3}}$ and $W=\RF^{m_{2}}\times\VG^{m_{3}}$.

\end{claim}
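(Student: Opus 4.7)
The plan is to apply Denef-Pas cell decomposition iteratively, first to eliminate the valued-field coordinates of $W$ (the supremum variables) and then to eliminate those of $X$ (pure parameters). By Corollary \ref{cor:definable partition}(2), I may assume from the start that
\[
f(x,w)=\#Y_{x,w}\cdot g(x,w)
\]
with $Y\subseteq X\times W\times\RF^{r}$ definable and $g\in\mathcal{P}_+(X\times W)$. The two kinds of VF variables are handled differently: for a VF coordinate of $W$ the goal is to convert a supremum over $\VF$ into a supremum over $\VG\times\RF$, while for a VF coordinate of $X$ we merely reparametrize the domain on the pieces of a finite definable partition.

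For the first step, fix a VF coordinate $w_1$ of $W$ and apply cell preparation with respect to $w_1$, treating all remaining coordinates (including those of $X$) as parameters. This produces a finite partition $X\times W=\bigsqcup_{j=1}^{M}C_j$ together with definable centers $c_j(x,w')$, where $w'$ denotes the coordinates of $W$ other than $w_1$, such that on $C_j$ every definable function appearing in the presentation of $f$ depends on $w_1$ only through the pair $(\val(w_1-c_j),\,\ac(w_1-c_j))$. Thus for each $j$ there is a function $\widetilde{f}_j\in\mathcal{C}_+(X\times W'\times\VG\times\RF)$ whose support encodes the cell constraints, and
\[
\sup_{w_1\in F}(f\cdot\mathbf{1}_{C_j})_F(x,w_1,w')=\sup_{(v,\xi)}\widetilde{f}_{j,F}(x,w',v,\xi).
\]
Summing over $j$ and invoking Lemma \ref{lem:supremum sandwich} replaces a supremum over $\VF\times W'$ by one over $\VG\times\RF\times W'$, at the cost of a multiplicative constant. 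Iterating removes all VF coordinates of $W$.

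For the second step, fix a VF coordinate $x_1$ of $X$ and apply cell decomposition with respect to $x_1$, parameters being the remaining coordinates of $X\times W$. This gives a finite partition $X=\bigsqcup_{j}D_j$ and centers $c_j$ such that on $D_j$ every definable ingredient of $f$ depends on $x_1$ only through $(\val(x_1-c_j),\,\ac(x_1-c_j))$. By Corollary \ref{cor:definable partition}(1) it suffices to treat each $f\cdot\mathbf{1}_{D_j}$ separately, and on each piece $f$ is the pullback, along the definable map $x_1\mapsto(\val(x_1-c_j),\ac(x_1-c_j))$, of a function in $\mathcal{C}_+$ on a domain in which $x_1$ has been replaced by one $\VG$- and one $\RF$-variable. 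Producing the desired $G$ on this reduced domain and composing with the pullback map yields an element of $\mathcal{C}_+(D_j)$; summing over $j$ yields one in $\mathcal{C}_+(X)$. Iterating completes the elimination of VF coordinates of $X$.

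The single non-formal input is the simultaneous cell preparation of all the definable data describing $Y$ and $g$, which is standard in the Denef-Pas framework. All other steps preserve $\mathcal{C}_+$ automatically: cell indicators are $\{0,1\}$-valued and hence formally non-negative, the change of variable $w_1\mapsto w_1-c_j$ (respectively $x_1\mapsto x_1-c_j$) is definable, and pullback along definable maps sends $\mathcal{C}_+$ to $\mathcal{C}_+$. The main pitfall, already flagged in the preceding remark, is that performing the two VF-eliminations in a single pass via naive quantifier elimination would intertwine the $X$- and $W$-variables; carrying them out in the stated order, with the $W$-elimination first, keeps the supremum intact and is what makes the reduction go through.
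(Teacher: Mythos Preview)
Your two-step strategy matches the paper's, and Step~1 is essentially identical: the paper packages it as a single definable surjection $\lambda:X\times W\to C\subseteq X\times\RF^{s}\times\ints^{r}$ over $X$ via \cite[Theorem 7.2.1]{CL08}, but that is precisely iterated cell decomposition in the $W$-direction. The problem is in Step~2.

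When you apply cell decomposition in $x_{1}$ with parameters the remaining coordinates of $X\times W$, the resulting partition is of $X\times W$, not of $X$ as you write, and the centers $c_{j}$ are a priori definable functions of \emph{all} the parameters, including the $\RF$- and $\VG$-coordinates of $W$. If $c_{j}$ genuinely depended on $w$, the map $x_{1}\mapsto(\val(x_{1}-c_{j}),\ac(x_{1}-c_{j}))$ would not be a map on $X$ alone, and pulling back a $G'$ from the reduced domain would produce a function of $(x,w)$ rather than of $x$; your candidate approximant would then fail to live in $\mathcal{C}_{+}(X)$. The argument can be repaired: after Step~1, $W$ has no $\VF$-coordinates, and by orthogonality of the sorts any $\VF$-valued definable function on $\RF$/$\VG$-variables has finite image, so after a further refinement the centers depend only on the remaining $\VF$-coordinates of $X$. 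But this is exactly the reason the two-step order matters, and you do not say it.

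The paper sidesteps the issue in Step~2 by using direct quantifier elimination instead of cell decomposition. Once $W$ has no $\VF$-variables, the only $\VF$-variables remaining are $y_{1},\dots,y_{n_{1}}$ of $X$; Pas QE then produces finitely many polynomials $g_{1},\dots,g_{l}\in\ints[y_{1},\dots,y_{n_{1}}]$ such that all the definable data determining $f$ factor through $(\ac(g_{i}(y)),\val(g_{i}(y)))$ together with the $\RF$/$\VG$-variables. This gives a single global reparametrization $\rho$ of $X$, visibly independent of $w$, with $f=\psi'\circ(\rho,\mathrm{id}_{W})$ and $\psi'\in\mathcal{C}_{+}$, so the supremum over $W$ passes through without any partition at all.
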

\begin{proof}[Proof of Claim \ref{claim1}]
We first get rid of the valued field variables $\VF^{m_{1}}$ of
$W$. Without loss of generality we may assume that $W=\VF^{m_{1}}$.
By induction, we may further assume that $m_{1}=1$. By \cite[Theorem 7.2.1]{CL08}
there exists a definable surjection $\lambda:X\times W\rightarrow C\subseteq X\times\RF^{s}\times\ints^{r}$
over $X$ as well as $\psi\in\mathcal{C}_{+}(C)$ such that $f=\psi\circ\lambda$.
Note that 
\[
\underset{w\in W_{F}}{\sup}f_{F}(x,w)=\underset{w\in W_{F}}{\sup}\psi_{F}\circ\lambda_{F}(x,w)=\underset{(\xi,k)\in k_{F}^{s}\times\ints^{r}}{\sup}\psi_{F}(x,\xi,k),
\]
up to extending $\psi$ by zero outside $C$. We may therefore assume
that $W=\RF^{m_{2}}\times\VG^{m_{3}}$. We next get rid of the valued
field variables $\VF^{n_{1}}$ of $X$, denoted $y:=y_{1},...,y_{n_{1}}$.
Write $x=(y,\eta,t)\in X$ and $w=(\xi,s)\in W$, with $\RF$-variables
$\eta,\xi$ and $\VG$-variables $t,s$. By Definition \ref{def:motivic functions},
$f$ is determined by a finite collection $\alpha_{i},\beta_{ij}:X\times W\to\ints$
of definable functions, and by a definable set $Y\subseteq X\times W\times\mathrm{RF}^{r}$.
By quantifier elimination in the valued field variables \cite[Theorem 4.1]{Pas89},
there exist finitely many polynomials $g_{1},...,g_{l}\in\ints[y_{1},...,y_{n_{1}}]$
such that the graphs of the functions in $\{\alpha_{i},\beta_{ij}\}$
can be defined by formulas of the form 
\[
\bigvee_{i=1}^{L}\chi_{i}(\xi,\eta,\ac(g_{1}(y)),...,\ac(g_{l}(y)))\wedge\theta_{i}(t,s,t',\val(g_{1}(y)),...,\val(g_{l}(y))),
\]
and the subset $Y$ can be defined by a formula of the form 
\[
\bigvee_{i=1}^{L'}\widetilde{\chi}_{i}(\xi,\eta,\xi',\ac(g_{1}(y)),...,\ac(g_{l}(y)))\wedge\widetilde{\theta}_{i}(t,s,\val(g_{1}(y)),...,\val(g_{l}(y))),
\]
where $\chi_{i}$ and $\widetilde{\chi}_{i}$ are $\mathcal{L}_{\mathrm{Res}}$-formulas,
$\theta_{i}$ and $\widetilde{\theta}_{i}$ are $\mathcal{L}_{\mathrm{Pres}}$-formulas,
$t'$ is in $\ints$ and $\xi'$ is in $\RF^{r}$. We now set $\lambda':X\times W\rightarrow\RF^{s'}\times\ints^{r'}\times W$
by $\lambda'(x,w)=(\rho(x),w)$ with 
\[
\rho(x)=\rho(y,\eta,t):=(\eta,\ac(g_{1}(y)),...,\ac(g_{l}(y)),t,\val(g_{1}(y)),...,\val(g_{l}(y))).
\]
Let $C'$ be the image of $\lambda'$. Note we may find definable
functions $\widetilde{\alpha}_{i},\widetilde{\beta}_{ij}:C'\rightarrow\ints$
and a definable subset $\widetilde{Y}\subseteq C'\times\mathrm{RF}^{r}$
such that $\alpha_{i}=\widetilde{\alpha}_{i}\circ\lambda'$, $\beta_{i}=\widetilde{\beta}_{ij}\circ\lambda'$
and $Y=(\lambda'\times\mathrm{Id})^{-1}(\widetilde{Y})$. Using this
new definable data, we construct $\psi'\in\mathcal{C}_{+}(C')$ such
that $f=\psi'\circ\lambda'$ and again we have 
\[
\underset{w\in W_{F}}{\sup}f_{F}(x,w)=\underset{w\in W_{F}}{\sup}\psi'_{F}\circ\lambda'_{F}(x,w)=\underset{w\in W_{F}}{\sup}\psi'_{F}(\rho(x),w).
\]
Hence we have reduced to the case where $X=\RF^{n_{2}}\times\VG^{n_{3}}$.
This finishes the proof of Claim \ref{claim1}. 
\end{proof}
\begin{claim}\label{claim2}

We may assume that $X=\RF^{n_{2}}\times\VG^{n_{3}}$ and $W=\RF^{m_{2}}$. 

\end{claim}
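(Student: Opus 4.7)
The plan is to eliminate the value-group variables of $W$ one at a time, so by induction on $m_{3}$ it suffices to treat the case $W=\RF^{m_{2}}\times\VG$, with a single $\VG$-variable $s$. By Corollary \ref{cor:definable partition}(2) we may write $f(x,\xi,s)=\#Y_{(x,\xi,s)}\cdot g(x,\xi,s)$ with $g\in\mathcal{P}_{+}(X\times W)$, and by Claim \ref{claim1} we already have $X=\RF^{n_{2}}\times\VG^{n_{3}}$.

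I would then apply Presburger cell decomposition in the variable $s$, over parameters $(x,\xi)\in X\times\RF^{m_{2}}$, together with a definable partition of the auxiliary set $Y\subseteq X\times W\times\RF^{r}$. By Corollary \ref{cor:definable partition}(1) we may work one cell at a time; after further refinement we reduce to cells of the form $\{(x,\xi,s):s\in I(x,\xi),\ s\equiv c\pmod{n}\}$, where $I(x,\xi)\subseteq\ints$ is a definable interval with affine $\ints$-valued endpoints in the parameters, and where all Presburger data $\alpha_{i},\beta_{ij}$ in the representation of $g$ become affine in $s$ on the cell. Likewise, after a further partition one arranges that $\#Y_{(x,\xi,s)}$ becomes a polynomial expression in $q_{F}$ with affine-in-$s$ exponents. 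On such a cell, $f_{F}(x,\xi,s)$ is therefore an explicit finite sum of terms of the shape $q_{F}^{a_{i}(x,\xi)s+b_{i}(x,\xi)}P_{i}(x,\xi,s)$, where $a_{i},b_{i}:X\times\RF^{m_{2}}\to\ints$ are definable and the $P_{i}$ are polynomial in $s$ with coefficients pulled back from $\mathcal{C}_{+}(X\times\RF^{m_{2}})$.

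The key step is then to show that, under the boundedness hypothesis, the supremum over $s\in I(x,\xi)$ of a sum of this shape is captured, up to a \emph{universal} multiplicative constant depending only on the syntactic complexity of $g$ (number of terms, degrees of the $P_{i}$, and modulus $n$), by the evaluation of $f_{F}$ at one of the endpoints of $I(x,\xi)$, or at a fixed bounded set of integer offsets therefrom. Each such endpoint-evaluation lies in $\mathcal{C}_{+}(X\times\RF^{m_{2}})$, since endpoints are definable affine functions of the parameters, and summing the resulting endpoint-evaluations over the finitely many cells produces the required $G\in\mathcal{C}_{+}(X\times\RF^{m_{2}})$ with $\sup_{s}f_{F}\leq G_{F}\leq C\sup_{s}f_{F}$.

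The main obstacle is precisely this last step: the sharp approximation of the supremum of a bounded, formally non-negative, single-variable geometric-polynomial sum by a fixed endpoint value. The general approximation of \cite[Theorem 2.1.3]{CGH18} would yield only a factor of the form $q_{F}^{C}$; the improvement here crucially exploits formal non-negativity, which rules out the destructive cancellations between high-magnitude terms that the general case must accommodate, and forces the sum to be dominated by the single term attaining the largest exponent on $I(x,\xi)$. This concentrated one-variable Presburger analysis is exactly where the gain over \cite{CGH18} is extracted; once it is established the claim follows, and the remaining $\RF$-variables of $W$ will be handled by a separate, essentially finite-sum step in the final stage of the proof of Theorem \ref{thm:improved supremum theorem}.
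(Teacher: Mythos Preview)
Your overall plan---induction on $m_3$, one-variable Presburger cell decomposition, evaluation at finitely many definable points---matches the paper's, but you skip a structural step the paper relies on, and one of your claims is false. The paper's first move is to invoke the orthogonality of the sorts $\RF$ and $\VG$: after a definable partition of $X\times W$ into boxes $A_1\times A_2$ with $A_1\subseteq\RF^{n_2+m_2}$ and $A_2\subseteq\VG^{n_3+m_3}$, one has on each box $f_F(\eta,t,\xi,s)=\#Y_{\eta,\xi}\cdot H_F(t,s)$ with $H\in\mathcal{P}_+(\VG^{n_3}\times\VG^{m_3})$ and $Y$ living purely over $\RF$-variables. This factorisation is what makes the rest work: the supremum over $s$ now concerns only the purely Presburger function $H$, while $\#Y_{\eta,\xi}$ is a multiplicative factor independent of $s$. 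You never perform this separation; instead you carry $\xi$ along as a parameter in the Presburger cell decomposition and then assert that ``after a further partition one arranges that $\#Y_{(x,\xi,s)}$ becomes a polynomial expression in $q_F$ with affine-in-$s$ exponents.'' That is false: $\#Y_{(x,\xi,s)}$ is a residue-field point count depending on the residue-field coordinates $(\eta,\xi)$, not a polynomial in $q_F$. What orthogonality actually buys (after partition) is that $\#Y_{(x,\xi,s)}$ becomes \emph{independent of $s$}---precisely the factorisation above.

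Your identification of the ``main obstacle'' is also misplaced. Once the factorisation is in place and one has rectilinearised so that $H_F(t,s)=\sum_i c_{i,F}(t)\,s^{a_i}q_F^{b_i s}$ on $B_t\subseteq\nats$, the paper simply quotes \cite[Lemmas 2.2.3 and 2.2.4]{CGH18}: these already supply finitely many definable functions $h_1,\dots,h_l:T\to\nats$ and an integer $m$ with $\sup_{s\in B_t}H_F(t,s)\le m\cdot\max_j H_F(t,h_j(t))$ whenever the supremum is finite---with constant $m$, not $q_F^{m}$, already in \cite{CGH18}. No new one-variable analysis is required here. Formal non-negativity enters not in that lemma but in the surrounding architecture: it is what licenses the repeated use of Lemma~\ref{lem:supremum sandwich} and Corollary~\ref{cor:definable partition} to pass between a sum and its pieces, and what guarantees that $\widetilde H(t):=m\sum_j H(t,h_j(t))$ lies in $\mathcal{P}_+$ and satisfies $\sup_s H_F\le\widetilde H_F\le ml\cdot\sup_s H_F$.
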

\begin{proof}[Proof of Claim \ref{claim2}]
Write $x=(\eta,t)$ and $w=(\xi,s)$ for the variables of $X=\RF^{n_{2}}\times\VG^{n_{3}}$
and $W=\RF^{m_{2}}\times\VG^{m_{3}}$. We would like to get rid of
the value group variables $\VG^{m_{3}}$ of $W$. Using the (model
theoretic) orthogonality of the sorts $\VG$ and $\RF$, there is
a definable partition of $X\times W$, such that each definable part
$A$ is a box $A_{1}\times A_{2}$ with $A_{1}\subseteq\RF^{n_{2}}\times\RF^{m_{2}}$
and $A_{2}\subseteq\VG^{n_{3}}\times\VG^{m_{3}}$, and such that on
each $A$, $f$ has the form 
\[
f_{F}|_{A_{F}}(\eta,t,\xi,s)=\#Y_{\eta,\xi}\cdot H_{F}(t,s),
\]
for some $H\in\mathcal{P}_{+}(\VG^{n_{3}}\times\VG^{m_{3}})$ and
$Y\subseteq\RF^{n_{2}}\times\RF^{m_{2}}\times\mathrm{RF}^{r}$. By
Corollary \ref{cor:definable partition}, and by our assumption on
$A$, we may assume $f_{F}=\#Y_{\xi,\eta}\cdot H_{F}(t,s)$. Note
that for each $F\in\mathrm{Loc}_{\gg}$ and each $(\eta,t,\xi)\in X_{F}\times k_{F}^{m_{2}}$
one has 
\[
\underset{s\in\ints^{m_{3}}}{\sup}f_{F}(\eta,t,\xi,s)=\#Y_{\eta,\xi}\cdot\underset{s\in\ints^{m_{3}}}{\sup}H_{F}(t,s),
\]
In order to approximate $\underset{s\in\ints^{m_{3}}}{\sup}H_{F}(t,s)$,
it is enough to consider the case where $m_{3}=1$ and proceed by
induction on $m_{3}$. Using Presburger cell decomposition and rectilinearization
(see \cite[Theorems 1 and 3]{Clu03}) we may assume that $H$ is in
$\mathcal{P}_{+}(B)$ for $B\subseteq\VG^{n_{3}}\times\nats$ with
$B_{t}:=\{s\in\nats:(t,s)\in B\}$ is either a finite set for each
$t\in\ints^{n_{3}}$, or $B_{t}=\nats$, and moreover, $H$ is of
the form 
\[
H_{F}(t,s)=\sum_{i=1}^{N}c_{i,F}(t)s^{a_{i}}q_{F}^{b_{i}s},
\]
with $a_{i}\in\nats$ and $b_{i}\in\ints$ and $c_{i}$ in $\mathcal{P}(\VG^{n_{3}})$.
Denote by $T$ the image of projection of $B$ to $\VG^{n_{3}}$.
We repeat a part of the argument of the proof of \cite[Theorem 2.1.3]{CGH18}.
Namely, by \cite[Lemmas 2.2.3 and 2.2.4]{CGH18}, there exist $m,l\in\nats_{\geq1}$
and finitely many definable functions $h_{1},...,h_{l}:T\rightarrow\nats$
with $h_{j}(t)\in B_{t}$ such that for each $t\in T$ for which $s\mapsto H_{F}(t,s)$
is bounded on $B_{t}$, one has 
\[
\underset{s\in B_{t}}{\sup}H_{F}(t,s)\leq m\cdot\underset{1\leq j\leq l}{\max}H_{F}(t,h_{j}(t)).
\]
In particular, setting $\widetilde{H}(t):=m\cdot\sum_{j=1}^{l}H(t,h_{j}(t))\in\mathcal{P}_{+}(T)$
we get: 
\[
\underset{s\in B_{t}}{\sup}H_{F}(t,s)<\widetilde{H}_{F}(t)<m\cdot l\cdot\underset{s\in B_{t}}{\sup}H_{F}(t,s).
\]
This finishes the proof of Claim \ref{claim2}. 
\end{proof}
\begin{claim}\label{claim3}

We may assume that $X=\RF^{n_{2}}$ and $W=\RF^{m_{2}}$. 

\end{claim}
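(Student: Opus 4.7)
The plan is to imitate the structure of Claim 2 but using only the orthogonality between the $\VG$ and $\RF$ sorts, since here we must eliminate the $\VG^{n_3}$-variables from $X$ while keeping the $\RF^{m_2}$-variables of $W$ intact. Writing $x = (\eta, t)$ and $w = \xi$ for the variables of $X = \RF^{n_2} \times \VG^{n_3}$ and $W = \RF^{m_2}$, I would first apply orthogonality of the $\VG$ and $\RF$ sorts to the finitely many definable functions $\alpha_i, \beta_{ij}$ and the finitely many definable sets $Y_i \subseteq X \times W \times \RF^{r_i}$ that present $f$ via Definition \ref{def:motivic functions}. This produces a finite definable partition of $X \times W$ into boxes $A = A_1 \times A_2$ with $A_1 \subseteq \RF^{n_2+m_2}$ and $A_2 \subseteq \VG^{n_3}$, on each of which $f$ takes the product form
$$f_F|_{A_F}(\eta, t, \xi) = \#Y_{\eta, \xi} \cdot H_F(t)$$
for some $H \in \mathcal{P}_+(\VG^{n_3})$ and some definable $Y \subseteq \RF^{n_2+m_2+r}$. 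By Corollary \ref{cor:definable partition}(1), it then suffices to handle one such box, so we may assume this product form globally on $X \times W$.

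The second and main step is to combine this product structure with the residue-field-only case of the theorem (which is the remaining case after the claim). Since $H_F(t)$ is non-negative and independent of $\xi$, for every $F \in \mathrm{Loc}_{\gg}$ and every $(\eta, t) \in X_F$ one has
$$\sup_{\xi \in k_F^{m_2}} f_F(\eta, t, \xi) = H_F(t) \cdot \sup_{\xi \in k_F^{m_2}} \#Y_{\eta, \xi},$$
and the supremum on the right is automatically finite because $k_F^{m_2}$ is finite. Granting Theorem \ref{thm:improved supremum theorem} in the case $X = \RF^{n_2}$, $W = \RF^{m_2}$ applied to $\widetilde{f}(\eta, \xi) := \#Y_{\eta, \xi} \in \mathcal{C}_+(\RF^{n_2} \times \RF^{m_2})$, one obtains $\widetilde{G} \in \mathcal{C}_+(\RF^{n_2})$ and a constant $C > 0$ with
$$\sup_{\xi \in k_F^{m_2}} \#Y_{\eta, \xi} \;\le\; \widetilde{G}_F(\eta) \;\le\; C \cdot \sup_{\xi \in k_F^{m_2}} \#Y_{\eta, \xi}.$$
Setting $G(\eta, t) := H(t) \cdot \widetilde{G}(\eta) \in \mathcal{C}_+(X)$ then yields the required two-sided bound on $X$ with the same constant $C$, completing the reduction.

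The only step that requires genuine care is the orthogonality-based product decomposition: one must verify that each definable function and each definable set appearing in the presentation of $f$ can, after a suitable definable partition, be split into an $\RF$-part (involving $\eta$ and $\xi$ only) and a $\VG$-part (involving $t$ only), and that formal non-negativity is inherited by both factors so that $H$ lands in $\mathcal{P}_+$ and $\#Y$ in $\mathcal{C}_+$. In contrast with Claim 2, no Presburger cell decomposition or rectilinearization of suprema is needed here, because the sort being eliminated from $X$ is never supremized over but merely carried along as a parameter; this is precisely why the reduction costs nothing in the approximation constant.
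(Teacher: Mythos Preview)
Your proposal is correct and is precisely the argument the paper has in mind; the paper's own proof is a single sentence (``This follows directly by Claim \ref{claim2}, Corollary \ref{cor:definable partition}, and using the orthogonality of the sorts $\VG$ and $\RF$''), and you have accurately unpacked what that sentence means. Your observation that no Presburger cell decomposition or rectilinearization is needed here---because the $\VG$-variables of $X$ are parameters rather than variables being supremized over---is exactly the reason the paper can dispose of this claim so tersely.
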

\begin{proof}
This follows directly by Claim \ref{claim2}, Corollary \ref{cor:definable partition},
and using the orthogonality of the sorts $\VG$ and $\RF$. 
\end{proof}
To continue the proof of Theorem \ref{thm:improved supremum theorem},
we may thus assume that $X=\RF^{n_{2}}$ and $W=\RF^{m_{2}}$. We
may assume, again using Corollary \ref{cor:definable partition},
that $f$ is of the form $f(x,w)=f(\eta,\xi)=u\cdot\#Y_{\eta,\xi}$,
with $\xi$ the coordinate on $W$, and $\eta$ on $X$ and $u=\{u_{F}\}_{F\in\mathrm{Loc}_{\gg}}$
is a motivic number. In particular, for each $\eta\in X_{F}$: 
\[
\underset{w\in W_{F}}{\sup}f_{F}(x,w)=\underset{\xi\in k_{F}^{m_{2}}}{\sup}f_{F}(\eta,\xi)=u_{F}\cdot\underset{\xi\in k_{F}^{m_{2}}}{\sup}\#Y_{\eta,\xi}.
\]
By a definable variant of the Lang-Weil estimates (see \cite[Main Theorem]{CvdDM92}),
there exists a definable partition $X\times W=\stackrel[i=0]{M}{\bigsqcup}A_{i}$
and constants $C'>0$, $d_{i}\in\nats$ and $l_{i1},l_{i2}\in\ints_{\geq1}$,
such that for each $1\leq i\leq M$ and each $F\in\mathrm{Loc}_{\gg}$
: 
\[
A_{i,F}:=\{(\eta,\xi)\in X_{F}\times W_{F}:\left|\#Y_{\eta,\xi}-\frac{l_{i1}}{l_{i2}}q_{F}^{d_{i}}\right|\leq C'\cdot q_{F}^{d_{i}-\frac{1}{2}}\},
\]
\[
A_{0,F}:=\{(\eta,\xi)\in X_{F}\times W_{F}:Y_{\eta,\xi}\text{ is empty}\}.
\]
Denote by $Z_{i}$ the projection of $A_{i}$ to $X$. For each subset
$I\subseteq\{1,\dots,M\}$, let $Z_{I}:=\bigcap_{i\in I}Z_{i}\backslash\bigcup_{j\in I^{c}}Z_{j}$,
with $Z_{\slashed{O}}:=X\backslash\stackrel[j=1]{M}{\bigcup}Z_{j}$.
Then $X=\bigsqcup_{I}Z_{I}$ is a definable partition, and thus we
may assume that $X=Z_{I}$. In this case, we have for $F\in\mathrm{Loc}_{\gg}$:
\begin{align*}
\underset{w\in W_{F}}{\sup}f_{F}(x,w) & =u_{F}\cdot\underset{\xi\in k_{F}^{m_{2}}}{\sup}\#Y_{\eta,\xi}\leq u_{F}\cdot\sum_{i\in I}\underset{\xi\in k_{F}^{m_{2}}}{\sup}\left(1_{A_{i,F}}\cdot\#Y_{\eta,\xi}\right)\\
 & \leq u_{F}\cdot\sum_{i\in I}2l_{i1}\cdot q_{F}^{d_{i}}\leq u_{F}\cdot\sum_{i\in I}4l_{i2}\cdot\underset{\xi\in k_{F}^{m_{2}}}{\sup}\left(1_{A_{i,F}}\cdot\#Y_{\eta,\xi}\right)\\
 & \leq\left(\sum_{i\in I}4l_{i2}\right)\underset{w\in W_{F}}{\sup}f_{F}(x,w),
\end{align*}
where we take zero for the supremum of the empty set. Since $\{u_{F}\cdot\sum_{i\in I}2l_{i1}\cdot q_{F}^{d_{i}}\}_{F\in\mathrm{Loc}_{\gg}}$
clearly lies in $\mathcal{C}_{+}(X)$, this finishes the proof of
Theorem \ref{thm:improved supremum theorem}. 
\end{proof}

\subsection{\label{subsec:Optimality-of-the bounds}Optimality of the bounds
and further remarks}

Let $X$ and $W$ be $\mathcal{L}_{\mathrm{DP}}$-definable sets.
Given a subclass $\mathcal{F}\subseteq\mathcal{C}(X\times W)$ of
motivic functions, one can ask whether for any $f\in\mathcal{F}$,
the function $\{\underset{w\in W_{F}}{\sup}f_{F}(x,w)\}_{F\in\mathrm{Loc}_{\gg}}$
can be approximated by a motivic function in $\mathcal{C}(X)$ up
to a constant $C$ in up to four increasing levels of approximation: 
\begin{enumerate}
\item[(1)] With $C$ depending on $F$ and $f$. 
\item[(2)] With $C$ depending on $f$ and independent of $F$. 
\item[(3)] With $C$ a universal constant, that is, uniform over all $f\in\mathcal{F}$
and $F\in\mathrm{Loc}_{\gg}$. 
\item[(4)] With $C=1+C'q_{F}^{-1/2}$ for some $C'$ depending on $f$ and independent
of $F$. 
\end{enumerate}
If the class $\mathcal{F}$ satisfies one of the Items $(i)$ above,
we say that $\mathcal{F}$ admits \emph{an approximation of suprema
of type $(i)$, or $\mathcal{F}$ is of type $(i)$. }Note that if
$\mathcal{F}$ is of type $(4)$ then it is also of type $(3)$, as
$C'q_{F}^{-1/2}<2$ for $F\in\mathrm{Loc}_{\gg}$. Similarly, type
$(i)$ is stronger than type $(j)$ for $j<i$. 
\begin{rem}
\label{rem:approximation}~ 
\begin{itemize}
\item The class $\mathcal{C}(X\times W)$ is not of type $(1)$ (and thus
of any type). Indeed, take $X=\ints^{2}$, $W=\{1,2\}\subseteq\ints$
and define $f\in\mathcal{C}(X\times W)$ by $f(x,y,1)=x^{2}-y$ and
$f(x,y,2)=y-x^{2}$. Then $\underset{w}{\sup\,}f(x,y,w)=\mathrm{max}(x^{2}-y,y-x^{2})$
cannot be approximated by a motivic function on $\mathcal{C}(X)$,
up to a constant depending on $F$ and $f$. 
\item The class $\mathcal{C}_{+}^{\mathrm{weak}}(X\times W):=\{f\in\mathcal{C}(X\times W):f_{F}\geq0\,\forall F\in\mathrm{Loc}_{\gg}\}$
is of type $(1)$, with $C=q_{F}^{C_{0}}$ for some $C_{0}>0$ depending
only on $f$. This is a special case treated in the proof of \cite[Theorem 2.1.3]{CGH18}.
One may wonder whether the class $\mathcal{C}_{+}^{\mathrm{weak}}(X\times W)$
is of type $(2)$. 
\end{itemize}
\end{rem}

Theorem \ref{thm:improved supremum theorem} shows that the family
$\mathcal{C}_{+}(X\times W)$, which is strictly contained in $\mathcal{C}_{+}^{\mathrm{weak}}(X\times W)$,
is of type $(2)$. This is the best possible approximation, as already
detected by the subclass $\mathcal{P}_{+}(X\times W)\subseteq\mathcal{C}_{+}(X\times W)$. 
\begin{prop}
\label{prop:approximation is tight}The families $\mathcal{P}_{+}(X\times W)$
and $\mathcal{C}_{+}(X\times W)$ are not of type $(3)$.
\end{prop}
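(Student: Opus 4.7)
The plan is to construct, for each positive integer $N$, a concrete function $f_N \in \mathcal{P}_+(X \times W)$ for a fixed choice of definable sets $X$ and $W$, with the property that no motivic function $G \in \mathcal{C}(X)$ can sandwich the supremum $\sup_{w \in W_F} f_{N,F}(x,w)$ via $\sup \le G \le C\cdot\sup$ with constant $C$ smaller than $c\,N$ for an absolute $c>0$. Since $\mathcal{P}_+ \subseteq \mathcal{C}_+$, producing the counterexample in $\mathcal{P}_+$ rules out type $(3)$ for both families at once, so it suffices to focus on the Presburger class.

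My approach is to work in a purely Presburger setup, say $X = W = \ints$, and to build $f_N$ as a formally non-negative sum of $N$ Presburger ``bumps'' whose supports are pairwise disjoint in $w$ and whose maxima are arranged so that the supremum-in-$w$ function $S_N(x) := \sup_{w} f_{N,F}(x,w)$ displays $N$ distinct asymptotic regimes as $x$ varies through $X_F$. Concretely, each bump would be an expression of the form $q_F^{\alpha_i(x)}\cdot\mathbf{1}_{w = w_i(x)}$ with definable $\alpha_i,w_i$ chosen so that which index $i$ realizes the sup depends delicately on $x$. By tracing through the proof of Theorem~\ref{thm:improved supremum theorem}---specifically Claim~\ref{claim2}, which produces the approximating constants $m\cdot l$ coming from Presburger rectilinearization---one expects that any motivic $G$ dominating $S_N$ is forced to have a cell decomposition with complexity growing with $N$, whence $C = C(G,f_N) \ge c\,N$ for some absolute $c>0$.

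The main technical obstacle is to make this lower bound rigorous. The key point is that every motivic $G \in \mathcal{C}(X)$ has a uniform ``structural complexity'' in $x$: on a finite definable partition of $X$ it is a finite sum of atoms of the form $q_F^{\alpha(x)}\beta(x)\prod_k(1-q_F^{a_k})^{-1}$ times counts $\#Y_x$ of RF-definable families, each with bounded combinatorial data. One must therefore show that such finitely-stratified objects cannot simultaneously stay within a constant factor $C < cN$ of the $N$ distinct values $S_N(x_1),\dots,S_N(x_N)$ at test points $x_i$ witnessing the $N$ regimes of $f_N$. This is a pigeonhole-style argument on the cells involved, leveraging the orthogonality of the sorts $\VG,\RF,\VF$ to rule out mixing across different regimes. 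Letting $N\to\infty$ then defeats any candidate universal constant $C$, which yields the proposition.
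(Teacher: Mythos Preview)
Your proposal has a genuine gap at the level of the construction. For the bump functions you suggest, $f_N(x,w)=\sum_i q_F^{\alpha_i(x)}\mathbf{1}_{w=w_i(x)}$ with Presburger-definable exponents $\alpha_i:\ints\to\ints$, the supremum over $w$ is $q_F^{\max_i\alpha_i(x)}$. But the $\alpha_i$ are piecewise linear, so $\alpha(x):=\max_i\alpha_i(x)$ is itself a definable function and $G(x)=q_F^{\alpha(x)}\in\mathcal{P}_+(X)$ computes the supremum \emph{exactly}, giving $C=1$ for every $N$. The obstruction to approximation does not live in the $q_F$-exponent part of a constructible function; it lives in the polynomial part $\prod_j\beta_{ij}(x)$, because the region where one such product dominates another (e.g.\ $\{x_1^{2}\ge x_2^{3}\}$) is \emph{not} Presburger-definable, and hence cannot be isolated by a cell decomposition of $X$. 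Your pigeonhole sketch does not engage with this distinction, so it cannot succeed as stated.

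There is also a structural issue: you try to work with a fixed $X=W=\ints$ and only let $f_N$ vary. The paper explicitly leaves open (see the remark following the proposition) whether a universal constant depending only on the number of variables of $X\times W$ exists, so showing failure of type~(3) on a fixed one-dimensional $X$ would go beyond what is known. The paper's argument instead lets the dimension grow: with $X=\ints_{\ge1}^{m}$, $W=\{1,\dots,m\}$, and $f(x_1,\dots,x_m,w)=x_w^{p_w}$ for distinct primes $p_1<\dots<p_m$, one shows that after cell decomposition any approximant $g$ is, on a large cell, a polynomial plus a negligible error; looking at directions where a single $x_j$ dominates forces the coefficient of each $x_j^{p_j}$ in $g$ to be at least $1-o(1)$, so along suitable asymptotic directions where all $x_j^{p_j}$ are comparable one has $g\gtrsim m\cdot\sup_w f$. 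The primality of the exponents is used to rule out cross-terms that could tighten the bound.
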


\begin{proof}
Let $X=\ints_{\geq1}^{m}$, $W=\{1,...,m\}\subseteq\ints$. Let $p_{1}=2<p_{2}=3<\dots<p_{m+1}$
be the first $m$ prime numbers, and take $f(x_{1},...,x_{m},w)=x_{w}^{p_{w}}$.
Then for any $\epsilon>0$ and any $g\in\mathcal{C}(X)$ with $\underset{1\leq w\leq m}{\sup}f_{F}(x,w)\leq g_{F}(x)$
for $F\in\mathrm{Loc}_{\gg}$, one cannot have 
\[
g_{F}(x)\leq(m-\epsilon)\cdot\underset{1\leq w\leq m}{\sup}f_{F}(x,w)\tag{\ensuremath{\star}}
\]
for each $F\in\mathrm{Loc}_{\gg}$ and $x\in X_{F}$. In fact, $\sum_{j=1}^{m}x_{j}^{p_{j}}$
is an optimal approximation (with constant $m$).

Here is a rough sketch. We assume, towards contradiction, the existence
of $g\in\mathcal{C}(X)$ satisfying $(\star)$. Using Presburger cell
decomposition \cite[Theorem 1]{Clu03}, we can decompose $X$ into
cells $X=\bigsqcup_{i=1}^{N}C_{i}$, such that on each $C_{i}$, the
definable Presburger functions appearing in $g$ are linear. We may
find a large cell of the form 
\[
C=\{(x_{1},...,x_{m})\in\ints_{\geq1}^{m}:x_{j}\geq\alpha_{j}(x_{j+1},...,x_{m})\wedge x_{j}=c_{j}\,\mathrm{mod}\,r_{j}\},
\]
for some linear functions $\alpha_{j}$, and integers $0\leq c_{j}\leq r_{j}$.
The cell $C$ is isomorphic to $\text{\ensuremath{\ints}}_{\geq1}^{m}$
by an affine change of coordinates $\varphi:\text{\ensuremath{\ints}}_{\geq1}^{m}\rightarrow C$,
after which $g_{F}\circ\varphi$ has the form 
\[
\sum_{i=1}^{M}c_{i}(F)\cdot q_{F}^{a_{i1}e_{1}+\dots+a_{im}e_{m}}\cdot\prod_{j=1}^{m}e_{j}^{b_{ij}},\tag{\ensuremath{\star}\ensuremath{\star}}
\]
for $\{(a_{i1},...,a_{im},b_{i1},...,b_{im})\}_{i}$ mutually different
tuples of integers, where $b_{ij}\geq0$. Since $\frac{g_{F}(x_{1},...,x_{m})}{x_{1}^{p_{1}}+...+x_{m}^{p_{m}}}$
is bounded from above and below by constants, it follows that all
$a_{ij}\leq0$. We can therefore write $g$ as: 
\[
g_{F}(x_{1},...,x_{m})=P_{F}(x_{1},...,x_{m})+E_{F}(x_{1},...,x_{m}),
\]
where $P$ is a polynomial with coefficients in $F$ and $E$ consists
of all the terms of $(\star\star)$ with $a_{ij}<0$ for some $j\in\{1,...,m\}$.
Let us write $P_{F}(x_{1},...,x_{m})=\sum_{j=1}^{m}d_{j}x_{j}^{p_{j}}+Q_{F}$
for some polynomial $Q$, which consists of monomials disjoint from
$\{x_{j}^{p_{j}}\}_{j=1}^{m}$, and coefficients $d_{j}$ depending
on $F$. Now the idea is to use the fact that the cell $C$ ``sees''
many asymptotic directions in $\text{\ensuremath{\ints}}_{\geq1}^{m}$,
to deduce: 
\begin{enumerate}
\item For $F\in\mathrm{Loc}_{\gg}$, the coefficients $d_{j}$ are bounded
from below by constants arbitrary close to $1$. 
\item In a certain region $\widetilde{C}$ in $C$, $E_{F}$ is negligible
with respect to $P_{F}$, so that $g_{F}\sim P_{F}$. The assumption
that the $p_{i}$'s are prime numbers, guarantees that in certain
asymptotic directions in $\widetilde{C}$, $Q_{F}$ is negligible
with respect to $\sum_{j=1}^{m}d_{j}x_{j}^{p_{j}}$, so that $g_{F}\sim\sum_{j=1}^{m}d_{j}x_{j}^{p_{j}}$. 
\end{enumerate}
Items $(1)$ and $(2)$ contradict our assumption on $g$. Note that
without the assumption on the $p_{i}$'s, one can get tighter approximations
than $\sum_{j=1}^{m}x_{j}^{p_{j}}$. For example, $\frac{4}{3}\left(x_{1}^{2}-x_{1}x_{2}^{2}+x_{2}^{4}\right)$
gives a tighter upper bound for $\mathrm{max}(x_{1}^{2},x_{2}^{4})$,
than $x_{1}^{2}+x_{2}^{4}$, since $\frac{4}{3}\left(x_{1}^{2}-x_{1}x_{2}^{2}+x_{2}^{4}\right)\leq\frac{4}{3}\mathrm{max}(x_{1}^{2},x_{2}^{4})$. 
\end{proof}
In \cite[Theorem 2.1.3]{CGH18}, an approximation of suprema result
is proven for a more general class $\mathcal{C}^{\mathrm{exp}}(X\times W)$
of motivic exponential functions, which involves additive characters,
and which is furthermore built out of functions which are definable
in the generalized Denef-Pas language. Due to this larger generality,
the approximation shown in \cite[Theorem 2.1.3]{CGH18} is a bit weaker
than type $(1)$ above (in \cite{CGH18}, one approximates $\sup|f|^{2}$
instead of $\sup f$). This is unavoidable, as already seen in Remark
\ref{rem:approximation}. 
\begin{rem}
One can weaken the definition of approximation as follows. For a function
$f\in\mathcal{C}_{+}(X\times W)$, assume there exist motivic functions
$\{g_{i}\}_{i=1}^{m}\in\mathcal{C}_{+}(X)$, with $m\in\nats$ such
that 
\[
\underset{1\leq i\leq m}{\max}\{g_{iF}(x)\}\leq\underset{w\in W_{F}}{\sup}f_{F}(x,w)\leq C\cdot\underset{1\leq i\leq m}{\max}\{g_{iF}(x)\},
\]
where $C$ is as in types $(1)$-$(4)$ above. Using this weaker form
of approximation, we expect $\mathcal{C}_{+}(X\times W)$ to be of
weakened type $(4)$.

One may also weaken $(3)$ by letting the constant $C$ depend on
the number of variables running over $X\times W$, and wonder whether
$\mathcal{C}_{+}(X\times W)$ is of type $(3)$ when weakened in this
sense. 
\end{rem}

\section{\label{sec:Number-theoretic-characterizatio}Number theoretic characterization
of the (FRS) property}

In this section we use Theorem \ref{thm:improved supremum theorem}
to prove a more general form of Theorem \ref{Thm A} for $\mathrm{Loc}_{\gg}$,
providing a full number theoretic characterization of (FRS) morphisms
(Theorem \ref{thm:number theoretic characterization of the (FRS) property}).

Throughout this section, $K$ will be a fixed number field. 

\subsection{\label{subsec:An-analytic-characterization}An analytic characterization
of the (FRS) property}

Given an $\mathcal{O}_{F}$-morphism $\varphi:X\rightarrow Y$, we
denote the natural maps $X(\mathcal{O}_{F}/\frak{\mathfrak{m}}_{F}^{k})\rightarrow Y(\mathcal{O}_{F}/\frak{\mathfrak{m}}_{F}^{k})$
by $\varphi$, therefore $\varphi^{-1}(\overline{y})$ is a finite
set in $X(\mathcal{O}_{F}/\frak{\mathfrak{m}}_{F}^{k})$, for any
$\overline{y}\in Y(\mathcal{O}_{F}/\frak{\mathfrak{m}}_{F}^{k})$.
We denote by $r_{k}:Y(\mathcal{O}_{F})\to Y(\mathcal{O}_{F}/\frak{\mathfrak{m}}_{F}^{k})$
and by $r_{l}^{k}:Y(\mathcal{O}_{F}/\frak{\mathfrak{m}}_{F}^{k})\rightarrow Y(\mathcal{O}_{F}/\frak{\mathfrak{m}}_{F}^{l})$
the natural reduction maps for $k\geq l$. 
\begin{defn}
Let $Y$ be a smooth $F$-variety, with $F\in\mathrm{Loc}$. A measure
$\mu$ on $Y(F)$ is called: 
\begin{enumerate}
\item \emph{Smooth} if for any $y\in Y(F)$ there exists an analytic neighborhood
$U\subseteq Y(F)$ and an analytic diffeomorphism $\psi:U\rightarrow\mathcal{O}_{F}^{\mathrm{dim}Y}$
such that $\psi_{*}\mu$ is a Haar measure on $\mathcal{O}_{F}^{\mathrm{dim}Y}$. 
\item \emph{Schwartz} if it is compactly supported and smooth. 
\end{enumerate}
\end{defn}

\begin{lem}[cf. \cite{Ser81,Wei82,Oes82}]
\label{lem:canonical measure}Let $F$ be in $\mathrm{Loc}$, and
let $Y$ be a finite type $\mathcal{O}_{F}$-scheme such that $Y\times_{\spec\mathcal{O}_{F}}\spec F$
is smooth, of pure dimension $d$. Then there is a unique Schwartz
measure $\mu_{Y(\mathcal{O}_{F})}$ on $Y(\mathcal{O}_{F})$, and
there exists $k_{0}\in\nats$, such that for every $k\geq k_{0}$
and every $\bar{y}\in Y(\mathcal{O}_{F}/\mathfrak{m}_{F}^{k})$, one
has
\begin{equation}
\mu_{Y(\mathcal{O}_{F})}(r_{k}^{-1}(\bar{y}))=q_{F}^{-kd}.\label{eq:property of canonical measure}
\end{equation}
The measure $\mu_{Y(\mathcal{O}_{F})}$ is referred to as the \textbf{canonical
measure} on $Y(\mathcal{O}_{F})$. In the special case when $Y$ is
smooth over $\mathcal{O}_{F}$, then (\ref{eq:property of canonical measure})
holds for every $k\geq1$.
\end{lem}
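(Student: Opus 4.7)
The plan is to first handle uniqueness (which is essentially formal from the characterizing property), then construct $\mu_{Y(\mathcal{O}_F)}$ in the smooth case via Hensel's lemma, and finally upgrade to the general case by a quantitative Hensel argument based on the smoothness of the generic fiber.

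Uniqueness comes for free. Since $Y$ is finite type over $\mathcal{O}_F$ one may embed $Y \hookrightarrow \mathbb{A}^N_{\mathcal{O}_F}$, whence $Y(\mathcal{O}_F) \subseteq \mathcal{O}_F^N$ is compact. The clopen sets $\{r_k^{-1}(\bar{y}) : k \geq k_0,\ \bar{y} \in Y(\mathcal{O}_F/\mathfrak{m}_F^k)\}$ form a basis for the topology and generate the Borel $\sigma$-algebra, so any two Radon measures agreeing on them coincide. Hence (\ref{eq:property of canonical measure}) determines $\mu_{Y(\mathcal{O}_F)}$ once it exists.

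For the smooth case ($Y$ smooth over $\mathcal{O}_F$), Hensel's lemma applied to local étale coordinates $Y \to \mathbb{A}^d_{\mathcal{O}_F}$ shows that for every $k \geq 1$ and every $\bar{y} \in Y(\mathcal{O}_F/\mathfrak{m}_F^k)$, the fiber of $r_k^{k+1}$ over $\bar{y}$ has exactly $q_F^d$ elements, and each point of $Y(\mathcal{O}_F/\mathfrak{m}_F^k)$ lifts to $Y(\mathcal{O}_F)$. Declaring $\mu(r_k^{-1}(\bar{y})) := q_F^{-kd}$ is therefore consistent across $k$ (a $q_F^d$-to-one refinement multiplied by $q_F^{-d}$), and Carathéodory extension produces a Borel measure on $Y(\mathcal{O}_F)$. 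The same étale coordinates furnish analytic diffeomorphisms $U \to \mathcal{O}_F^d$ turning $\mu$ into Haar measure, so $\mu$ is smooth and $k_0 = 1$ works.

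For the general case, the essential point is a uniform stabilization: there exists $k_0$ such that for every $k \geq k_0$ and every $\bar{y} \in r_k(Y(\mathcal{O}_F))$, the fiber of $r_k^{k+1}$ (restricted to the image of $Y(\mathcal{O}_F)$) has exactly $q_F^d$ elements. This is the main obstacle. Given $y \in Y(\mathcal{O}_F)$, its generic-fiber image $y_F \in Y_F(F)$ is smooth since $Y_F$ is smooth of pure dimension $d$. Choosing a local affine presentation of $Y$ and a $d{\times}d$ Jacobian minor of full rank at $y_F$, the quantitative form of Hensel's lemma yields an analytic neighborhood of $y$ in $Y(F)$ diffeomorphic to $\mathcal{O}_F^d$, and once $k$ exceeds twice the valuation of that Jacobian minor, lifts to level $k+1$ are parametrized by a $d$-dimensional tangent space modulo $\mathfrak{m}_F^{k+1}$, giving exactly $q_F^d$ of them. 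By compactness of $Y(\mathcal{O}_F)$, finitely many such charts suffice, and $k_0$ can be taken uniform. With stabilization proved, the construction and smoothness verification proceed exactly as in the smooth case, completing the proof.
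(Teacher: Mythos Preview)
Your argument is correct and more self-contained than the paper's. The paper proceeds differently: it invokes \cite[Lemma~3]{Oes82} (building on \cite[Theorem~9]{Ser81}) to obtain existence and uniqueness directly in the affine case, then for general $Y$ takes a finite open affine cover $Y=\bigcup_i U_i$, notes that $Y(\mathcal{O}_F)=\bigcup_i U_i(\mathcal{O}_F)$, and glues the measures $\mu_{U_i(\mathcal{O}_F)}$ on overlaps via the uniqueness already established; the claim $k_0=1$ in the smooth case is then deduced from Hensel's lemma, as in your argument. Your quantitative-Hensel-plus-compactness route essentially reconstructs what underlies Oesterl\'e's lemma, making the existence of a uniform $k_0$ transparent, whereas the paper's approach is shorter on the page but leans on the literature and on the affine-cover gluing to handle non-affine $Y$. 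One small slip: the assertion ``one may embed $Y\hookrightarrow\mathbb{A}^N_{\mathcal{O}_F}$'' requires $Y$ to be affine; compactness of $Y(\mathcal{O}_F)$ for general finite type $Y$ should instead be argued via a finite affine cover (each $U_i(\mathcal{O}_F)$ is closed in some $\mathcal{O}_F^{N_i}$, hence compact), and indeed the remainder of your argument already works chart by chart.
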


\begin{proof}
If $Y$ is affine, then the existence and uniqueness of $\mu_{Y(\mathcal{O}_{F})}$
follows from \cite[Lemma 3]{Oes82}, building on \cite[Theorem 9]{Ser81}.
In general, let $Y=\bigcup_{i=1}^{N}U_{i}$ be an open affine cover
by $\mathcal{O}_{F}$-subschemes $U_{i}$. Then $Y(\mathcal{O}_{F})=\bigcup_{i=1}^{N}U_{i}(\mathcal{O}_{F})$.
Note that
\[
\mu_{U_{i}(\mathcal{O}_{F})}|_{U_{i}(\mathcal{O}_{F})\cap U_{j}(\mathcal{O}_{F})}=\mu_{U_{j}(\mathcal{O}_{F})}|_{U_{i}(\mathcal{O}_{F})\cap U_{j}(\mathcal{O}_{F})}
\]
by uniqueness, so we can glue them together to form $\mu_{Y(\mathcal{O}_{F})}$.
If furthermore $Y$ is smooth over $\mathcal{O}_{F}$, then by applying
Hensel's lemma to (\ref{eq:property of canonical measure}) we can
choose $k_{0}=1$ (see also \cite[Theorem 2.25]{Wei82}).
\end{proof}
In \cite{AA16}, Aizenbud and Avni gave an analytic characterization
of the (FRS) property: 
\begin{thm}[{\cite[Theorem 3.4]{AA16}}]
\label{thm:analytic criterion of the (FRS) property}Let $\varphi:X\rightarrow Y$
be a map between smooth $K$-varieties. Then the following are equivalent: 
\begin{enumerate}
\item $\varphi$ is (FRS). 
\item For any $F\in\mathrm{Loc}_{0}$ and any Schwartz measure $\mu$ on
$X(F)$, the measure $\varphi_{*}(\mu)$ has continuous density. 
\item For any $x\in X(\overline{K})$ and any finite extension $K'/K$ with
$x\in X(K')$, there exists $F\in\mathrm{Loc}_{0}$ containing $K'$,
and a non-negative Schwartz measure $\mu$ on $X(F)$ that does not
vanish at $x$ such that $\varphi_{*}(\mu)$ has continuous density. 
\end{enumerate}
\end{thm}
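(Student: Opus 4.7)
The plan is to prove $(1) \Rightarrow (2) \Rightarrow (3) \Rightarrow (1)$, with $(2) \Rightarrow (3)$ being essentially immediate: given $x \in X(K')$, pick any $F \in \mathrm{Loc}_0$ containing $K'$ (e.g.\ a completion at a finite place split over $K'$), and take $\mu$ to be a non-negative bump function times the canonical measure supported in an analytic neighborhood of $x$ in $X(F)$, so that $\mu$ does not vanish at $x$ and the conclusion follows directly from $(2)$.

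For $(1) \Rightarrow (2)$, continuity of the density is local on $Y(F)$, so it suffices to bound and produce a continuous density near each $y_0 \in Y(F)$. I would reduce to $\mu = \phi \cdot |\omega_X|$ with $\phi$ a smooth bump function and $\omega_X$ a local top form, and then apply an embedded resolution of singularities $\pi : \widetilde{X} \to X$ of the fiber $X_{y_0,\varphi}$ inside $X$, which exists by Hironaka since $\mathrm{char}(K) = 0$. After pulling back via $\pi$, the composition $\varphi \circ \pi$ becomes monomial in suitable local coordinates relative to a simple normal crossings divisor, and one can compute $\varphi_* \mu$ by a $p$-adic change of variables in the spirit of Denef's formula. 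The pushforward density then appears as a sum of elementary $p$-adic integrals indexed by the strata of the resolution, with exponents expressed in terms of multiplicities and discrepancies. The rational singularities hypothesis on the fiber translates, via the Kempf--Elkik criterion, into positivity of the relevant numerical data, from which both local boundedness and continuity of the density follow.

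For $(3) \Rightarrow (1)$, since (FRS) is a local property at points of $X$ and is checked fiber-wise, I would show it at each $x_0 \in X(\overline{K})$ using the given $F$ and $\mu$. A Fubini-type decomposition writes the density of $\varphi_*\mu$ at $y_0 = \varphi(x_0)$ as an integral over $X_{y_0,\varphi}(F)$ against a smooth measure coming from $\mu/\varphi^*\omega_Y$, which up to a non-vanishing smooth factor is the canonical Serre--Oesterl\'e measure on the fiber. Continuity of the pushforward density at $y_0$, combined with non-vanishing of $\mu$ at $x_0$, forces the fiber to be a local complete intersection of the expected dimension at $x_0$ (giving flatness of $\varphi$ there) and forces the normalized fiber counts $q_F^{-k \dim X_{y_0,\varphi}} \cdot \# X_{y_0,\varphi}(\mathcal{O}_F / \mathfrak{m}_F^k)$ to stabilize as $k \to \infty$; by the absolute case (Theorem \ref{thm: counting points absolute}) together with Musta\c{t}\u{a}'s jet-scheme characterization (Theorem \ref{thm:rational singularities and jet schemes}), this stabilization is equivalent to $X_{y_0,\varphi}$ having rational singularities, giving (FRS).

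The main obstacle is the precise dictionary in $(1) \Rightarrow (2)$ between the geometry of the resolution of a single fiber and the analytic behavior of the density \emph{as $y$ varies near} $y_0$. Making the resolution work in a neighborhood rather than just at one fiber, and propagating the positivity of discrepancies to a uniform continuity statement, is the technical heart of the proof; one typically handles this by resolving the total space along the fiber direction and invoking semicontinuity of the relevant invariants in families.
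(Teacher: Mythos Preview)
This theorem is not proved in the paper at all: it is quoted verbatim from \cite[Theorem 3.4]{AA16} and used as a black box (for instance in Corollary~\ref{cor:pushforward positive characteristic} and in the proof of Theorem~\ref{Thm A}). So there is no ``paper's own proof'' to compare against, and your proposal is really an attempt to reconstruct the argument of Aizenbud--Avni.

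As a sketch of that argument, your outline is broadly in the right spirit, but there is a concrete gap in your $(3)\Rightarrow(1)$ step. You appeal to Theorem~\ref{thm: counting points absolute}, whose hypothesis (2) requires the normalized counts to be bounded \emph{for every prime $p$}. Condition (3) of the present theorem only hands you a single $F\in\mathrm{Loc}_0$ and a single Schwartz measure $\mu$, hence boundedness (or stabilization) of fiber counts over $\mathcal{O}_F/\mathfrak{m}_F^k$ for that one $F$. Theorem~\ref{thm: counting points absolute} as stated does not let you conclude rational singularities from boundedness at a single place; you need the single-prime version of the Aizenbud--Avni argument (resolution of the fiber plus Denef's formula over that fixed $F$, together with the numerical criterion for rational singularities), which is essentially what \cite{AA16} proves directly rather than as a corollary of the global statement. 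Also note that the fiber $X_{y_0,\varphi}$ is a priori only an $F$-variety, so invoking a theorem about finite type $\mathbb{Z}$-schemes requires an additional spreading-out step that you have not made explicit.

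Your $(1)\Rightarrow(2)$ sketch correctly identifies the main difficulty (passing from a resolution of one fiber to uniform control as $y$ varies) but does not resolve it; ``invoking semicontinuity of the relevant invariants in families'' is precisely the delicate point, and in \cite{AA16} this is handled by working with a resolution of the total space along the fiber and a careful local computation, not by a general semicontinuity principle.
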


The next result shows the above characterization extends to local
fields of large positive characteristic. 
\begin{cor}
\label{cor:pushforward positive characteristic}Let $\varphi:X\rightarrow Y$
be a map between smooth $K$-varieties. Then $\varphi$ is (FRS) if
and only if for every $F\in\mathrm{Loc}_{\gg}$, the measure $\varphi_{*}(\mu_{X(\mathcal{O}_{F})})$
has bounded density with respect to $\mu_{Y(\mathcal{O}_{F})}$.
\end{cor}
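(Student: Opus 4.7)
The plan is to reduce Corollary~\ref{cor:pushforward positive characteristic} to the characteristic zero result of Theorem~\ref{thm:analytic criterion of the (FRS) property} via the transfer principle for bounds (Theorem~\ref{thm:-transfer principle for bounds}). The key observation is that the Radon--Nikodym density $h_F := d\varphi_*(\mu_{X(\mathcal{O}_F)})/d\mu_{Y(\mathcal{O}_F)}$ can be tracked by the formally non-negative motivic function
\[
g_F(y,k) := \frac{\#\varphi^{-1}(r_k(y))}{q_F^{k(\dim X - \dim Y)}},
\]
which, by Lemma-Definition~\ref{lem:canonical measure} together with Fubini, coincides with the average of $h_F$ over the ball $r_k^{-1}(r_k(y))$. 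In particular, $\|h_F\|_{L^\infty}<\infty$ is equivalent to $\sup_{(y,k)\in Y(\mathcal{O}_F)\times\mathbb{Z}_{\geq 1}} g_F(y,k)<\infty$, and the two quantities agree whenever $h_F$ is continuous.

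For the forward direction, suppose $\varphi$ is (FRS). For each $F \in \mathrm{Loc}_0$, Theorem~\ref{thm:analytic criterion of the (FRS) property} yields continuity of $h_F$, whence boundedness by compactness of $Y(\mathcal{O}_F)$. Given $F' \in \mathrm{Loc}_{+,\gg}$, pick $F \in \mathrm{Loc}_{0,\gg}$ sharing the same residue field as $F'$ (e.g.~$F=\mathbb{Q}_p$ and $F'=\mathbb{F}_p((t))$ both have residue field $\mathbb{F}_p$), and choose an integer $N \geq \|h_F\|_\infty$. Then the pointwise bound $g_F(y,k) \leq N$ holds on $Y(\mathcal{O}_F) \times \mathbb{Z}_{\geq 1}$. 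Applying Theorem~\ref{thm:-transfer principle for bounds} to $H=g$ and to the constant motivic function $G \equiv N$ transfers this bound to $F'$, producing $g_{F'}(y,k) \leq N$ for all $(y,k)$, and hence $\|h_{F'}\|_\infty \leq N$.

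For the reverse direction, assume $h_F \in L^\infty$ for every $F \in \mathrm{Loc}_{\gg}$. Specializing to $F \in \mathrm{Loc}_{0,\gg}$ and to any $y_0 \in Y(K) \subset Y(\mathcal{O}_F)$ (after enlarging $F$ to contain the field of definition of $y_0$), the sequence $k \mapsto \#X_{y_0}(\mathcal{O}_F/\mathfrak{m}_F^k)/q_F^{k(\dim X - \dim Y)}$ is bounded. The $k=1$ estimate together with Lang--Weil and generic flatness forces $\dim X_{y_0}=\dim X-\dim Y$, so $\varphi$ is flat at $y_0$ and $X_{y_0}$ is a local complete intersection. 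Theorem~\ref{thm: counting points absolute} then yields that $X_{y_0}$ has rational singularities. Since this holds on a Zariski-dense collection of fibers and the (FRS) locus is Zariski-open in $Y$, $\varphi$ is (FRS).

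The main obstacle lies in the reverse direction: one must extract both the flatness of $\varphi$ and the rationality of its fibers from a single scalar $L^\infty$ bound on the density. Flatness is handled by the $k=1$ term together with Lang--Weil, while rationality requires invoking the full strength of the absolute Aizenbud--Avni characterization (Theorem~\ref{thm: counting points absolute}) fiber by fiber. The forward direction, by contrast, is a clean application of the transfer principle once $h_F$ has been recognized as the supremum of a formally non-negative motivic function.
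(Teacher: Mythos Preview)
Your proposal has genuine gaps in both directions.

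\textbf{Forward direction.} Your application of the transfer principle (Theorem~\ref{thm:-transfer principle for bounds}) is circular. You fix $F'\in\mathrm{Loc}_{+,\gg}$, pick $F$ with the same residue field, and then choose an integer $N=N(F)\geq\|h_F\|_\infty$ to serve as the motivic function $G\equiv N$. But the threshold hidden in ``$\mathrm{Loc}_{\gg}$'' in Theorem~\ref{thm:-transfer principle for bounds} depends on the data $(H,G)$, hence on $N$, hence on $F$; you have not shown that the residual characteristic of your chosen $F$ exceeds that threshold. The paper avoids this by first proving a \emph{uniform} bound $f_F(y)<q_F^{a}$ for all $F\in\mathrm{Loc}_{0,\gg}$ with a single $a\in\ints$ (using \cite[Theorem 2.1.2]{CGH18} or \cite[Appendix~B, Theorem~14.6]{ST16}, together with compactness of $Y(\mathcal{O}_F)$), and only then applies the transfer principle once with the fixed motivic function $G=q_F^{a}$. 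This extra uniformity step is essential and is missing from your argument.

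\textbf{Reverse direction.} Your invocation of Theorem~\ref{thm: counting points absolute} is not justified: that theorem requires a bound $\#X_{y_0}(\ints/p^{k}\ints)/p^{k\dim X_{y_0}}<C$ uniform over \emph{all} primes $p$, whereas the hypothesis of the corollary only gives a bound $C_F$ depending on $F$. Likewise, your flatness argument ``the $k=1$ estimate together with Lang--Weil'' does not work: Lang--Weil gives asymptotics as $q_F\to\infty$, and without a uniform bound on $C_F$ you cannot control the left-hand side. (One can in fact extract flatness from boundedness in $k$ for a \emph{single} large $F$, but that is not the argument you wrote.) The paper takes a different route: it invokes condition~(3) of Theorem~\ref{thm:analytic criterion of the (FRS) property}, which only requires, for each $x\in X(\overline{K})$, the existence of \emph{one} local field $F$ and one Schwartz measure whose pushforward has \emph{continuous} density. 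The passage from ``bounded density'' to ``continuous density'' (or the construction of such a Schwartz measure) is handled by \cite[Lemma~3.15]{GH19}, following the argument of \cite[Proposition~3.16]{GH19}. This single-field analytic criterion is precisely what sidesteps the uniformity issue that breaks your use of Theorem~\ref{thm: counting points absolute}.
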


\begin{proof}
Without loss of generality, we may assume that $Y$ is affine. By
choosing an $\mathcal{O}_{K}$-model of $Y$, we may identify it as
an $\mathcal{L}_{\mathrm{DP}}$-definable set. Assume $\varphi$ is
(FRS). For each $F\in\mathrm{Loc}_{\gg}$, write $\tau_{F}:=\varphi_{*}(\mu_{X(\mathcal{O}_{F})})$.
By \cite[Theorem 3.4(2)]{AA16}, we can write $\tau_{F}=f_{F}\cdot\mu_{Y(\mathcal{O}_{F})}$
and $f_{F}$ is continuous, for each $F\in\mathrm{Loc}_{0,\gg}$.
Moreover, locally, $f$ can be written as an integral of a motivic
function $G$ in $\mathcal{C}_{+}(Y\times\VF^{\mathrm{dim}X-\mathrm{dim}Y})$,
over $\VF^{\mathrm{dim}X-\mathrm{dim}Y}$. By \cite[Theorem 4.4.1]{CGH14},
it follows that $G_{F}(y,\cdot)$ is integrable, for each $F\in\mathrm{Loc}_{\gg}$
and $y\in Y(\mathcal{O}_{F})$. By Theorem \ref{integration of motivic},
we can choose $f$ to be in $\mathcal{C}_{+}(Y)$.

By \cite[Appendix B, Theorem 14.6]{ST16} (or more generally, by \cite[Theorem 2.1.2]{CGH18})
and since $Y(\mathcal{O}_{F})$ is compact, there exists $a\in\ints$,
such that for each $F\in\mathrm{Loc}_{0,\gg}$ and each $y\in Y(\mathcal{O}_{F})$,
one has $f_{F}(y)<q_{F}^{a}$. By Theorem \ref{thm:-transfer principle for bounds}
we thus have $f_{F}(y)<q_{F}^{a}$ for each $F\in\mathrm{Loc}_{\gg}$
and each $y\in Y(\mathcal{O}_{F})$, as required. The other direction
follows from Theorem \ref{thm:analytic criterion of the (FRS) property}
combined with \cite[Lemma 3.15]{GH19}, as in the proof of \cite[Proposition 3.16]{GH19}. 
\end{proof}

\subsection{A number-theoretic characterization of the (FRS) Property}

We now recall the Lang-Weil estimates, and set the required notation
to state the main theorem. 
\begin{defn}
~\label{def:constant in Lang weil} 
\begin{enumerate}
\item For a finite type $\mathbb{F}_{q}$-scheme $Z$, we denote by $C_{Z}$
the number of its top-dimensional geometrically irreducible components
which are defined over $\mathbb{F}_{q}$. 
\item Let $\varphi:X\rightarrow Y$ be a morphism between finite type $\ints$-schemes
$X$ and $Y$, and let $y\in Y(\mathbb{F}_{q})$. Then we write $C_{X,q}$:=$C_{X_{\mathbb{F}_{q}}}$
and $C_{\varphi,q,y}:=C_{(X_{\mathbb{F}_{q}})_{y,\varphi}}$. 
\end{enumerate}
\end{defn}

\begin{thm}[The Lang-Weil estimates \cite{LW54}]
\label{thm:Lang-Weil}For every $M\in\nats$, there exists $C(M)>0$,
such that for every prime power $q$, and any finite type $\mathbb{F}_{q}$-scheme
$X$ of complexity at most $M$ (see e.g. \cite[Definition 7.7]{GH19}),
one has 
\[
\left|\frac{\#X(\mathbb{F}_{q})}{q^{\mathrm{dim}X}}-C_{X}\right|<C(M)q^{-\frac{1}{2}}.
\]
\end{thm}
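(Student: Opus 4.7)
The plan is to reduce the statement to the classical Lang--Weil estimate for a single geometrically irreducible variety and then control, uniformly in $M$, the combinatorics of the decomposition into geometrically irreducible components. First, since $\#X(\mathbb{F}_q) = \#X^{\mathrm{red}}(\mathbb{F}_q)$, we may assume $X$ is reduced. A uniform complexity bound $M$ gives us: a uniform bound on $\dim X$, on the number of irreducible components of $X_{\overline{\mathbb{F}_q}}$, on the degrees of these components in some fixed projective embedding, and on the defining data of their pairwise intersections. All these facts follow from elimination theory and constructibility applied within a bounded-complexity universe, and they are the standard unpacking of "bounded complexity."

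Next, I would split $X$ into its top-dimensional part $X^{\mathrm{top}}$ (union of components of maximal dimension $\dim X$) and the complement $X^{\mathrm{low}}$ of strictly smaller dimension. Induction on dimension (the case $\dim X = 0$ being trivial) gives $\#X^{\mathrm{low}}(\mathbb{F}_q) \leq C'(M)\, q^{\dim X - 1}$, which is absorbed in the error term. For $X^{\mathrm{top}}$, decompose over $\overline{\mathbb{F}_q}$ into geometrically irreducible components $Y_1, \dots, Y_N$, all of dimension $\dim X$, with $N \leq N(M)$. The absolute Frobenius $\sigma$ permutes the $Y_i$'s, and a geometric point lies in $X(\mathbb{F}_q)$ iff it is $\sigma$-fixed.

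For each Frobenius orbit $O$, if $|O| = 1$ (i.e.\ $Y_i$ is defined over $\mathbb{F}_q$), the classical Lang--Weil bound, applied uniformly via the bounded degree in the fixed embedding, yields
\[
\bigl|\#Y_i(\mathbb{F}_q) - q^{\dim X}\bigr| \leq C''(M)\, q^{\dim X - 1/2}.
\]
If $|O| > 1$, then any $\mathbb{F}_q$-point of $\bigcup_{j \in O} Y_j$ lies in the intersection $Y_i \cap Y_i^\sigma$, which is a proper closed subset of $Y_i$, hence of dimension strictly less than $\dim X$; by the inductive bound its $\mathbb{F}_q$-count is $O(q^{\dim X - 1})$. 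Finally, inclusion--exclusion to avoid double-counting points lying on several components contributes only error terms of size $O(q^{\dim X - 1})$, since any two distinct $Y_i, Y_j$ intersect in a proper closed subvariety of each. Summing gives
\[
\#X(\mathbb{F}_q) = C_X \cdot q^{\dim X} + O\bigl(q^{\dim X - 1/2}\bigr),
\]
with implicit constant depending only on $M$.

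The algebraic-geometric heart of the argument — the bound for a single absolutely irreducible variety — is precisely the content of \cite{LW54} and is taken as known. The genuine work in deducing the stated uniform version is therefore bookkeeping: verifying that the number of components, their degrees, and the codimensions of all incidences depend only on the complexity $M$. I expect this to be the main (but essentially routine) obstacle, as it requires making explicit the constructibility estimates hidden in the notion of complexity, rather than any new geometric input.
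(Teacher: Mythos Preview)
The paper does not actually prove this theorem: it is stated as a known result with a citation to \cite{LW54}, and no proof is given in the paper. So there is no ``paper's own proof'' to compare against.

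That said, your sketch is the standard and correct route for deducing the stated uniform version from the classical Lang--Weil estimate for a single geometrically irreducible variety: pass to the reduced scheme, strip off lower-dimensional components by induction, decompose the top-dimensional part over $\overline{\mathbb{F}_q}$ into geometrically irreducible pieces permuted by Frobenius, apply the original Lang--Weil bound to the singleton orbits, and observe that non-trivial orbits and pairwise intersections contribute only $O(q^{\dim X - 1})$. The only point worth making explicit is that the uniformity in $M$ for the classical Lang--Weil input (i.e.\ that the constant in $|\#Y_i(\mathbb{F}_q) - q^{\dim X}| \le C''(M)\, q^{\dim X - 1/2}$ depends only on the degree of $Y_i$ in a fixed embedding) is already in \cite{LW54}, where the constant is explicit in the degree and the ambient projective dimension; bounded complexity bounds both of these, so this is fine.
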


Let $X,Y$ be finite type $\mathcal{O}_{K}$-schemes, with $X_{K},Y_{K}$
smooth and geometrically irreducible, and let $\varphi:X\rightarrow Y$
be a dominant morphism. Let $\mu_{X(\mathcal{O}_{F})}$ and $\mu_{Y(\mathcal{O}_{F})}$
be the canonical measures on $X(\mathcal{O}_{F})$ and $Y(\mathcal{O}_{F})$
for $F\in\mathrm{Loc}$. Since $\varphi$ is dominant, it follows
that $\tau_{F}:=\varphi_{*}(\mu_{X(\mathcal{O}_{F})})$ is absolutely
continuous with respect to $\mu_{Y(\mathcal{O}_{F})}$, and thus has
an $L^{1}$-density (see e.g. \cite[Corollary 3.6]{AA16}), so that
$\tau_{F}=f_{F}(y)\cdot\mu_{Y(\mathcal{O}_{F})}$. When $Y$ is affine,
the collection $f=\{f_{F}:Y(\mathcal{O}_{F})\rightarrow\complex\}_{F\in\mathrm{Loc}_{\gg}}$
can be chosen to be formally non-negative. Indeed, as in the proof
of Corollary \ref{cor:pushforward positive characteristic}, locally,
$f_{F}$ can be written as an integral of a motivic function $G$
in $\mathcal{C}_{+}(Y\times\VF^{\mathrm{dim}X-\mathrm{dim}Y})$, over
$\VF^{\mathrm{dim}X-\mathrm{dim}Y}$. Note there is an open affine
subscheme $U$ of $Y$, such that $\varphi_{K}$ is smooth over $U_{K}$.
Then $G_{F}(y,\cdot)$ is integrable for every $y\in U(F)$ and $F\in\mathrm{Loc}_{\gg}$.
By Theorem \ref{integration of motivic} it follows that $f|_{U}$
is formally non-negative. Since $U(F)$ is dense in $Y(F)$ for $F\in\mathrm{Loc}_{\gg}$,
by extending $f|_{U}$ by $0$ we get a collection of densities on
$\{Y(\mathcal{O}_{F})\}_{F\in\mathrm{Loc}_{\gg}}$ which is formally
non-negative. 

For $F\in\mathrm{Loc}_{\gg}$, define a function $g_{F}$ for $y\in\mathcal{O}_{F}$
and $k\in\ints_{\geq1}$ by 
\[
g_{F}(y,k)=\frac{1}{\mu_{Y(\mathcal{O}_{F})}(B(y,k))}\int_{\widetilde{y}\in B(y,k)}f_{F}(\widetilde{y})\mu_{Y(\mathcal{O}_{F})},
\]
where $B(y,k)=r_{k}^{-1}(r_{k}(y))$. By Theorem \ref{integration of motivic},
it follows that $\{g_{F}:Y(\mathcal{O}_{F})\times\ints_{\geq1}\rightarrow\complex\}_{F\in\mathrm{Loc}_{\gg}}$
is a formally non-negative motivic function.

For every $F\in\mathrm{Loc}_{\gg}$, every $y\in Y(\mathcal{O}_{F})$
and every $k\in\ints_{\geq1}$, we have 
\begin{equation}
g_{F}(y,k)=\frac{\varphi_{*}(\mu_{X(\mathcal{O}_{F})})(B(y,k))}{\mu_{Y(\mathcal{O}_{F})}(B(y,k))}=\frac{\#\varphi^{-1}(r_{k}(y))}{q_{F}^{k(\mathrm{dim}X_{K}-\mathrm{dim}Y_{K})}},\label{eq:(4.2)}
\end{equation}
where the last equality follows from Lemma-Definition \ref{lem:canonical measure},
and the fact that $Y$ is smooth over $\mathcal{O}_{F}$ for $F\in\mathrm{Loc}_{\gg}$.
Set 
\[
h_{F}(y,k)=\frac{\#\left(\varphi^{-1}(r_{k}(y))\cap(r_{1}^{k})^{-1}(X^{\mathrm{sing},\varphi}(k_{F}))\right)}{q_{F}^{k(\mathrm{dim}X_{K}-\mathrm{dim}Y_{K})}}.
\]
The asymptotics of the functions $h$ and $g$, in $q_{F}$ and $k$,
measure how wild are the singularities of $\varphi$. For example,
if $\varphi_{K}$ is smooth, then $h_{F}(y,k)\equiv0$ and $g_{F}(y,k)<C$
for $F\in\mathrm{Loc}_{\gg}$ and some constant $C$. On the other
hand, if $\varphi:\mathbb{A}^{1}\rightarrow\mathbb{A}^{1}$ is the
map $x\mapsto x^{m}$, then $g(0,k)=h(0,k)=q_{F}^{k-\left\lceil \frac{k}{m}\right\rceil }$.

Furthermore, the motivic function $\{h_{F}\}_{F\in\mathrm{Loc}_{\gg}}$
is formally non-negative (Proposition \ref{prop:h is formally non-negative}).
This is used to prove our main theorem, which we state now. 
\begin{thm}
\label{thm:number theoretic characterization of the (FRS) property}Let
$\varphi:X\to Y$ be a dominant morphism between finite type $\mathcal{O}_{K}$-schemes
$X$ and $Y$, with $X_{K},Y_{K}$ smooth and geometrically irreducible.
Then the following are equivalent: 
\begin{enumerate}
\item $\varphi_{K}:X_{K}\to Y_{K}$ is (FRS). 
\item There exists $C_{1}>0$, such that for each $F\in\mathrm{Loc}_{\gg}$,
$k\in\ints_{\geq1}$ and $y'\in Y(\mathcal{O}_{F})$:
\[
h_{F}(y',k)<C_{1}q_{F}^{-1}.
\]
\item There exists $C_{2}>0$ such that for each $F\in\mathrm{Loc}_{\gg}$,
$k\in\ints_{\geq1}$ and $y\in Y(\mathcal{O}_{F}/\frak{\mathfrak{m}}_{F}^{k})$:
\[
\left|\frac{\#\varphi^{-1}(y)}{q_{F}^{k(\mathrm{dim}X_{K}-\mathrm{dim}Y_{K})}}-\frac{\#\varphi^{-1}(r_{1}^{k}(y))}{q_{F}^{\mathrm{dim}X_{K}-\mathrm{dim}Y_{K}}}\right|<C_{2}q_{F}^{-1}.
\]
\item There exists $C_{3}>0$ such that for each $F\in\mathrm{Loc}_{\gg}$,
$k\in\ints_{\geq1}$ and $y\in Y(\mathcal{O}_{F}/\frak{\mathfrak{m}}_{F}^{k})$:
\[
\left|\frac{\#\varphi^{-1}(y)}{q_{F}^{k(\mathrm{dim}X_{K}-\mathrm{dim}Y_{K})}}-C_{\varphi,q_{F},r_{1}^{k}(y)}\right|<C_{3}q_{F}^{-\frac{1}{2}}.
\]
\item There exists $C_{4}>0$ such that for each $F\in\mathrm{Loc}_{\gg}$,
$\varphi_{*}(\mu_{X(\mathcal{O}_{F})})$ has continuous density $f_{F}$
with respect to $\mu_{Y(\mathcal{O}_{F})}$, and for each $y'\in Y(\mathcal{O}_{F})$,
one has: 
\[
\left|f_{F}(y')-C_{\varphi,q_{F},r_{1}(y')}\right|<C_{4}q_{F}^{-\frac{1}{2}}.
\]
\end{enumerate}
\end{thm}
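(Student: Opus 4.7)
The plan is to prove the cycle $(1)\Rightarrow(2)\Rightarrow(4)\Rightarrow(5)\Rightarrow(1)$, together with the direct implications $(4)\Rightarrow(3)$ and $(3)\Rightarrow(1)$. The content of the theorem is concentrated in $(1)\Rightarrow(2)$; all the remaining implications follow from Lang--Weil (Theorem \ref{thm:Lang-Weil}), continuity properties of the canonical measure (Lemma-Definition \ref{lem:canonical measure}), and Theorem \ref{thm:analytic criterion of the (FRS) property}. First, the finitely many residue characteristics excluded by $\mathrm{Loc}_{\gg}$ are treated separately via Corollary \ref{cor:pushforward positive characteristic}, so I focus on $F\in\mathrm{Loc}_{\gg}$.

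For the main implication $(1)\Rightarrow(2)$, the key observation is that $\{h_F\}_F$ is a formally non-negative motivic function (asserted by Proposition \ref{prop:h is formally non-negative}), and so Theorem \ref{Thm B} is available. The proof proceeds in four steps mirroring the sketch of Theorem \ref{Thm A}. Step (a): by Corollary \ref{cor:pushforward positive characteristic} applied to the (FRS) morphism $\varphi$, the density $f_F$ is bounded on $Y(\mathcal{O}_F)$ for each fixed $F$, whence after averaging over balls one obtains $\sup_{y',k}g_F(y',k)<C(F)$ and thus $\sup_{y',k}h_F(y',k)<C(F)$ with a bound possibly depending on $F$. Step (b): apply Theorem \ref{Thm B} with $X=\ints_{\geq 1}$ and $W=Y(\mathcal{O}_F)$ to obtain a single function $G\in\mathcal{C}_+(\ints_{\geq 1})$ satisfying
\[
\sup_{y'\in Y(\mathcal{O}_F)}h_F(y',k)\leq G_F(k)\leq C\cdot\sup_{y'\in Y(\mathcal{O}_F)}h_F(y',k),
\]
with $C$ independent of $F$. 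Step (c): invoke the Presburger suprema-approximation results used in the proof of Theorem \ref{thm:improved supremum theorem} (namely \cite[Lemmas 2.2.3--2.2.4]{CGH18}) to dominate $\sup_{k\geq 1}G_F(k)$ by a finite sum $\sum_{\ell\in L}G_F(\ell)$ over a finite set $L\subseteq\ints_{\geq 1}$ independent of $F$. Step (d): for each $\ell\in L$, the quantity $G_F(\ell)$ is a motivic number, and by Theorem \ref{thm:-transfer principle for bounds} it suffices to bound it by $C\cdot q_F^{-1}$ for $F\in\mathrm{Loc}_{0,\gg}$. Here Proposition \ref{prop:useful lemma for jet-flat maps} identifies $(J_{\ell-1}(X))^{\mathrm{sing},J_{\ell-1}(\varphi)}$ with $J_{\ell-1}(X^{\mathrm{sing},\varphi})$; Proposition \ref{prop:jet scheme description of the (FRS) property}(1) ensures that each fiber of $J_{\ell-1}(\varphi)$ is locally integral, so that the singular locus of each fiber has codimension at least $1$; and Lang--Weil applied to those $\mathbb{F}_{q_F}$-fibers produces the $q_F^{-1}$ saving.

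For the other implications: $(2)\Rightarrow(4)$ follows by splitting $\varphi^{-1}(r_k(y))$ into its $\varphi$-smooth and $\varphi$-singular parts; Hensel's lemma (applied to $J_{k-1}(\varphi)$ at smooth points) gives $\#\varphi^{-1}(r_k(y))^{\mathrm{sm}}=q_F^{(k-1)d}\cdot\#\varphi^{-1}(r_1^k(y))^{\mathrm{sm}}$, and Lang--Weil then yields $\#\varphi^{-1}(r_1^k(y))^{\mathrm{sm}}=C_{\varphi,q_F,r_1^k(y)}\cdot q_F^d+O(q_F^{d-1/2})$, while the singular contribution equals $q_F^{kd}\cdot h_F(y,k)=O(q_F^{kd-1})$ by (2). $(4)\Rightarrow(3)$ is the triangle inequality applied to (4) at levels $k$ and $1$. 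For $(4)\Rightarrow(5)$, note that $f_F(y')=\lim_{k\to\infty}g_F(y',k)$ by the defining property of the canonical density via ball averages, and the estimate (4) is uniform in $k$, so it passes to the limit yielding both continuity of $f_F$ and the Lang--Weil asymptotic. The implication $(5)\Rightarrow(1)$ is immediate from Theorem \ref{thm:analytic criterion of the (FRS) property}. Finally, combining (3) with the Lang--Weil bound on $g_F(y,1)$ shows that $g_F$ is uniformly bounded, hence $f_F$ is uniformly bounded, which gives $(1)$ via Corollary \ref{cor:pushforward positive characteristic}.

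The main obstacle will be step (d) of $(1)\Rightarrow(2)$: encoding the codimension statement on the singular locus of each jet-scheme fiber as a motivic inequality of the form $|H_F|\leq|G_F|$ so that Theorem \ref{thm:-transfer principle for bounds} applies, and then extracting the $q_F^{-1}$ saving from Lang--Weil uniformly in $F\in\mathrm{Loc}_{\gg}$. A secondary subtlety is the verification that $h_F$ is \emph{formally} non-negative, not merely non-negative on each $F$; this is precisely what makes Theorem \ref{Thm B} produce an $F$-independent approximation constant, and is what ultimately drives the proof.
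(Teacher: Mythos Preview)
Your treatment of the main implication $(1)\Rightarrow(2)$ is essentially identical to the paper's: formal non-negativity of $h$ (Proposition~\ref{prop:h is formally non-negative}) feeds into Theorem~\ref{thm:improved supremum theorem}, then Presburger approximation reduces to finitely many $k$, and finally the jet-scheme codimension-$1$ statement (via Proposition~\ref{prop:useful lemma for jet-flat maps} and Lemma~\ref{lem:1-smooth is (FRS)}) plus Lang--Weil gives the $q_F^{-1}$ saving. This part is fine.

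There is, however, a genuine gap in your chain of easier implications. Your claimed $(4)\Rightarrow(3)$ ``by triangle inequality'' does not work: applying $(4)$ at levels $k$ and $1$ and subtracting gives only
\[
\left|g_F(y,k)-g_F(y,1)\right|<2C_3\,q_F^{-1/2},
\]
whereas $(3)$ demands a bound of order $q_F^{-1}$. The exponent is off by $1/2$, and this cannot be repaired from $(4)$ alone. The fix is to derive $(3)$ directly from $(2)$ instead: your own Hensel splitting in the $(2)\Rightarrow(4)$ argument shows that the $\varphi$-smooth contributions at levels $k$ and $1$ are exactly equal after normalization, so
\[
g_F(y,k)-g_F(y,1)=h_F(y',k)-h_F(y',1),
\]
and then $(2)$ immediately gives the $q_F^{-1}$ bound. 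This is precisely how the paper organizes the argument ($(2)\Rightarrow(3)\Rightarrow(4)$ rather than $(2)\Rightarrow(4)\Rightarrow(3)$).

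A secondary point: in both your $(2)\Rightarrow(4)$ and $(3)\Rightarrow(1)$ you silently use that the fibers of $\varphi_{k_F}$ have dimension exactly $d=\dim X_K-\dim Y_K$, so that Lang--Weil compares $\#\varphi^{-1}(\bar y)$ with $C_{\varphi,q_F,\bar y}\,q_F^{d}$. This flatness needs to be established first. It does follow from $(2)$ (the bound $h_F(y',1)<C_1q_F^{-1}$ forces $\dim X^{\mathrm{sing},\varphi}_{\bar y}\le d-1$, hence the whole fiber has dimension $d$), and the paper extracts it from $(3)$ via a jet-scheme argument; either way, you should make the step explicit.
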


Before we prove Theorem \ref{thm:number theoretic characterization of the (FRS) property},
we first show it implies Theorem \ref{Thm A}. 
\begin{proof}[Proof of Theorem \ref{Thm A}]
We prove $(1)\Rightarrow(3)\Rightarrow(2)\Rightarrow(4)\Rightarrow(1)$.
To prove $(3)\Rightarrow(2)$, we first treat large primes using implication
$(3)\Rightarrow(4)$ of Theorem \ref{thm:number theoretic characterization of the (FRS) property},
and then treat small primes using the Lang-Weil estimates. Implication
$(4)\Rightarrow(1)$ follows from Theorem \ref{thm:analytic criterion of the (FRS) property}. 

Let us assume that Condition $(2)$ holds. By Lemma-Definition \ref{lem:canonical measure},
and by Condition $(2)$, there exists $C_{1}>0$, such that for every
prime $p$, every $y\in Y(\ints/p^{k}\ints)$ and every $k\geq k_{0}$,
one has
\begin{equation}
\frac{\varphi_{*}\mu_{X(\Zp)}(r_{k}^{-1}(y))}{\mu_{Y(\Zp)}(r_{k}^{-1}(y))}=\frac{\mu_{X(\Zp)}(\varphi^{-1}(r_{k}^{-1}(y)))}{p^{-k\mathrm{dim}Y_{\rats}}}=\frac{\mu_{X(\Zp)}(r_{k}^{-1}(\varphi^{-1}(y)))}{p^{-k\mathrm{dim}Y_{\rats}}}=\frac{\#\varphi^{-1}(y)}{p^{k(\mathrm{dim}X_{\rats}-\mathrm{dim}Y_{\rats})}}<C_{1},\label{eq:(2) implies (4)}
\end{equation}
where $\mu_{X(\Zp)}$ and $\mu_{Y(\Zp)}$ are the canonical measures
on $X(\Zp)$ and $Y(\Zp)$. Let $f_{p}$ be the density of $\varphi_{*}\mu_{X(\Zp)}$
with respect to $\mu_{Y(\Zp)}$. Combining (\ref{eq:(2) implies (4)})
with Lebesgue's differentiation theorem, we get for almost all $y'\in Y(\Zp)$:
\[
f_{p}(y')=\lim\limits _{k\to\infty}\frac{\varphi_{*}\mu_{X(\Zp)}(r_{k}^{-1}(r_{k}(y')))}{\mu_{Y(\Zp)}(r_{k}^{-1}(r_{k}(y')))}<C_{1},
\]
which implies Condition $(4)$. 

It is left to prove $(1)\Rightarrow(3)$. The case of large primes
follows from the implication $(1)\Rightarrow(3)$ of Theorem \ref{thm:number theoretic characterization of the (FRS) property}.
It is left to prove $(3)$ for a fixed prime $p$. By Theorem \ref{thm:analytic criterion of the (FRS) property},
we have $f_{p}<C(p)$ for some $C(p)>0$. Using (\ref{eq:(2) implies (4)}),
we deduce that
\begin{equation}
\frac{\#\varphi^{-1}(y)}{p^{k(\mathrm{dim}X_{\rats}-\mathrm{dim}Y_{\rats})}}<C(p),\label{eq:bound for small primes}
\end{equation}
for every $k\geq k_{0}$ and $y\in Y(\ints/p^{k}\ints)$. For $y\in Y(\ints/p^{k}\ints)$
with $k<k_{0}$ we can take the trivial bound $\#\varphi^{-1}(y)\leq\sum_{l=1}^{k_{0}}\#X(\ints/p^{l}\ints)$
to deduce (\ref{eq:bound for small primes}) for every $k\in\nats$.
Using the triangle inequality, and by applying the trivial upper bound
$\#\varphi^{-1}(\bar{y})<\#X(\mathbb{F}_{p})$ for $\bar{y}\in Y(\mathbb{F}_{p})$,
we deduce $(3)$. 
\end{proof}
\begin{rem}
One can easily adapt the proof of Theorem \ref{Thm A} above to prove
a more general statement where the collection $\left\{ \Qp\right\} _{p}$
is replaced with all completions $K_{\mathfrak{p}}$ of a fixed number
field $K$. On the other hand, Theorem \ref{thm:number theoretic characterization of the (FRS) property}
is definitely not true for all $F\in\mathrm{Loc}$ (e.g. take $\varphi(x)=3x$,
and consider unramified extensions of $\rats_{3}$). 
\end{rem}

We now move to the proof of Theorem \ref{thm:number theoretic characterization of the (FRS) property}.
We start with the easier implications, and deal with the more challenging
implication $(1)\Rightarrow(2)$ in Subsection \ref{subsec:harder implication}.
\begin{proof}[Proof of $(2)\Rightarrow(3)\Rightarrow(4)\Rightarrow(5)\Rightarrow(1)$
of Theorem \ref{thm:number theoretic characterization of the (FRS) property}]
\textbf{Implication $(2)\Rightarrow(3)$: }assume that $h_{F}(y',k)<C_{1}q_{F}^{-1}$
for each $F\in\mathrm{Loc}_{\gg}$, each $k\in\nats$ and each $y'\in Y(\mathcal{O}_{F})$.
Set $y:=r_{k}(y')$ and note that:
\begin{align*}
\left|\frac{\#\varphi^{-1}(y)}{q_{F}^{k(\mathrm{dim}X_{K}-\mathrm{dim}Y_{K})}}-\frac{\#\varphi^{-1}(r_{1}^{k}(y))}{q_{F}^{\mathrm{dim}X_{K}-\mathrm{dim}Y_{K}}}\right| & =\left|\frac{\#\varphi_{|X^{\mathrm{sm},\varphi}}^{-1}(y))}{q_{F}^{k(\mathrm{dim}X_{K}-\mathrm{dim}Y_{K})}}+h_{F}(y',k)-\frac{\#\varphi_{|X^{\mathrm{sm},\varphi}}^{-1}(r_{1}^{k}(y))}{q_{F}^{\mathrm{dim}X_{K}-\mathrm{dim}Y_{K}}}-h_{F}(y',1)\right|\\
 & =\left|h_{F}(y',k)-h_{F}(y',1)\right|\leq2C_{1}q_{F}^{-1}.
\end{align*}
where the second equality follows from Hensel's lemma and the inequality
follows from our assumption on $h$. Since $r_{k}$ is surjective
for $F\in\mathrm{Loc}_{\gg}$, this finishes the proof.

\textbf{Implication $(3)\Rightarrow(4)$: }let us first prove that
$\varphi_{K}$ is flat, assuming Condition $(3)$. It is enough to
show that $\varphi_{\mathbb{F}_{p}}$ is flat for infinitely many
prime numbers $p$. Let $p$ be a prime large enough such that $\mathrm{dim}X_{K}=\mathrm{dim}X_{\mathbb{F}_{p}}$,
$\mathrm{dim}Y_{K}=\mathrm{dim}Y_{\mathbb{F}_{p}}$ and such that
Condition $(3)$ holds for $F=\mathbb{F}_{q}((t))$ for any $q$ which
is a power of $p$. Note there are infinitely many primes $p$ such
that $\mathbb{F}_{p}$ is a residue field of $\mathcal{O}_{K}$ for
some prime of $\mathcal{O}_{K}$. Let $x\in X(\mathbb{F}_{q})$ for
such $q$ and let $\widetilde{x}\in J_{k}(X)(\mathbb{F}_{q})\simeq X(\mathbb{F}_{q}[t]/(t^{k+1}))$
be the image of $x$ under the zero section embedding $X(\mathbb{F}_{q})\hookrightarrow J_{k}(X)(\mathbb{F}_{q})$,
so that $r_{1}^{k}(\varphi(\widetilde{x}))=\varphi(x)$. Then by Condition
$(3)$, we have for any $k\in\nats$: 
\begin{equation}
\left|\frac{\#\varphi^{-1}(\varphi(\widetilde{x}))}{q^{(k+1)(\mathrm{dim}X_{K}-\mathrm{dim}Y_{K})}}-\frac{\#\varphi^{-1}(\varphi(x))}{q^{\mathrm{dim}X_{K}-\mathrm{dim}Y_{K}}}\right|<C_{2}\cdot q^{-1}.\label{eq:(4.3)}
\end{equation}
By choosing $q$ to be a suitable power of $p$ we may assume $C_{\varphi,q,\varphi(x)}\geq1$.
Notice that $\#\varphi^{-1}(\varphi(\widetilde{x}))=\#J_{k}(X_{\varphi(x),\varphi_{\mathbb{F}_{q}}})(\mathbb{F}_{q})$.
Since $\mathrm{dim}J_{k}(X_{\varphi(x),\varphi_{\mathbb{F}_{q}}})\geq(k+1)\cdot\mathrm{dim}X_{\varphi(x),\varphi_{\mathbb{F}_{q}}}$
and since $C_{\varphi,q,\varphi(x)}\geq1$, we have by (\ref{eq:(4.3)})
and by the Lang-Weil estimates that 
\[
\mathrm{dim}X_{\varphi(x),\varphi_{\mathbb{F}_{q}}}=\mathrm{dim}X_{K}-\mathrm{dim}Y_{K}=\mathrm{dim}X_{\mathbb{F}_{q}}-\mathrm{dim}Y_{\mathbb{F}_{q}}.
\]
By miracle flatness, we are done.

To prove Condition $(4)$, by the triangle inequality, it is enough
to find $C'_{3}$ such that for each $F\in\mathrm{Loc}_{\gg}$, $k\in\ints_{\geq1}$
and $y\in Y(\mathcal{O}_{F}/\frak{\mathfrak{m}}_{F}^{k})$: 
\[
\left|\frac{\#\varphi^{-1}(r_{1}^{k}(y))}{q_{F}^{\mathrm{dim}X_{K}-\mathrm{dim}Y_{K}}}-C_{\varphi,q_{F},r_{1}^{k}(y)}\right|<C'_{3}q_{F}^{-\frac{1}{2}}.
\]
This follows from the fact that $\varphi_{K}$ is flat, via a relative
variant of the Lang-Weil estimates (see e.g. \cite[Theorem 8.4]{GHb}).

\textbf{Implications $(4)\Rightarrow(5)$ and $(5)\Rightarrow(1)$:}
let $f_{F}$ be the density of $\varphi_{*}(\mu_{X(\mathcal{O}_{F})})$
with respect to $\mu_{Y(\mathcal{O}_{F})}$. By Lebesgue's differentiation
theorem and Condition $(4)$, for almost every $y'\in Y(\mathcal{O}_{F})$,
we have: 
\begin{align}
\left|f_{F}(y')-C_{\varphi,q_{F},r_{1}(y')}\right| & =\left|\lim\limits _{k\to\infty}\frac{\mu_{X(\mathcal{O}_{F})}(\varphi^{-1}(B(y',k)))}{\mu_{Y(\mathcal{O}_{F})}(B(y',k))}-C_{\varphi,q_{F},r_{1}(y')}\right|\\
 & =\left|\lim\limits _{k\to\infty}\frac{\#\varphi^{-1}(r_{k}(y'))}{q_{F}^{k(\mathrm{dim}X_{K}-\mathrm{dim}Y_{K})}}-C_{\varphi,q_{F},r_{1}(y')}\right|<C_{3}q_{F}^{-\frac{1}{2}}.\label{eq:Lebesgue's differentiation}
\end{align}
This also shows that $f_{F}$ is essentially bounded for $F\in\mathrm{Loc}_{\gg}$.
By Corollary \ref{cor:pushforward positive characteristic} and by
Theorem \ref{thm:analytic criterion of the (FRS) property}, it follows
that $f_{F}$ can be chosen to be continuous, so that (\ref{eq:Lebesgue's differentiation})
holds for all $y'\in Y(\mathcal{O}_{F})$. This implies Condition
$(5)$, which implies Condition $(1)$ using the same Corollary \ref{cor:pushforward positive characteristic}. 
\end{proof}

\subsubsection{\label{subsec:harder implication}Proof of the implication $(1)\Rightarrow(2)$}

In this section we will prove the remaining implication of Theorem
\ref{thm:number theoretic characterization of the (FRS) property},
namely $(1)\Rightarrow(2)$. We first observe the following: 
\begin{prop}
\label{prop:h is formally non-negative}Assume that $Y$ is affine.
Then the motivic function $h$ is formally non-negative. 
\end{prop}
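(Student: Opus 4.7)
The plan is to realize $h$ as a $p$-adic integral of a function that is manifestly formally non-negative by inspection, and then invoke Theorem \ref{integration of motivic}. The starting observation is that, since $X_K$ is smooth and geometrically irreducible, for $F\in\mathrm{Loc}_\gg$ the $\mathcal{O}_F$-scheme $X_{\mathcal{O}_F}$ is smooth, so Lemma-Definition \ref{lem:canonical measure} gives $\mu_{X(\mathcal{O}_F)}(r_k^{-1}(\bar{x}))=q_F^{-k\dim X_K}$ for every $\bar{x}\in X(\mathcal{O}_F/\mathfrak{m}_F^k)$ and every $k\geq 1$. Rewriting the numerator of $h_F(y,k)$ as a sum of measures of such fibers yields
\[
h_F(y,k)=q_F^{k\dim Y_K}\cdot\mu_{X(\mathcal{O}_F)}(A_{y,k,F}),
\]
where, using that $Y$ is affine with coordinates $y=(y_1,\dots,y_N)$ and $\varphi=(\varphi_1,\dots,\varphi_N)$,
\[
A_{y,k,F}:=\bigl\{x\in X(\mathcal{O}_F):\val(\varphi_j(x)-y_j)\geq k\text{ for all }j,\text{ and }r_1(x)\in X^{\mathrm{sing},\varphi}(k_F)\bigr\}.
\]

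Next I would check that $A=\{(x,y,k):x\in A_{y,k,F}\}$ is an $\Ldp$-definable subset of $X\times Y\times\ints_{\geq 1}$. The reduction condition on $\varphi(x)-y$ is a direct formula in the valued field and $\VG$ sorts. The singular-locus condition $r_1(x)\in X^{\mathrm{sing},\varphi}(k_F)$ is cut out, on a fixed affine chart of $X$, by vanishing of an appropriate collection of minors of the Jacobian of $\varphi$, which for $F\in\mathrm{Loc}_\gg$ becomes a single $\mathcal{L}_{\mathrm{Res}}$-formula applied to $\ac$ of the coordinates and is independent of $F$. To handle the fact that $X$ need not be affine, cover $X$ by finitely many affine open $\mathcal{O}_K$-subschemes $U_1,\dots,U_m$ and write $X(\mathcal{O}_F)$ as the $\Ldp$-definable disjoint union $\bigsqcup_i W_i(\mathcal{O}_F)$ with $W_i:=U_i\setminus(U_1\cup\cdots\cup U_{i-1})$; on each $W_i$ the canonical measure is a uniformly definable density identifying with Haar measure in the ambient affine coordinates.

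With these ingredients, the indicator $\mathbf{1}_A$ lies in $\mathcal{C}_+(X\times Y\times\ints_{\geq 1})$ (it is represented by a residue-field count that is either $0$ or $1$, hence formally non-negative), and the monomial $(y,k)\mapsto q_F^{k\dim Y_K}$ is also formally non-negative. Hence their product, summed over the pieces $W_i$, is an element of $\mathcal{C}_+$. Integrating out the $x$-variable via Theorem \ref{integration of motivic} produces a formally non-negative motivic function on $Y\times\ints_{\geq 1}$, which by construction equals $h$.

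The only genuine subtlety is the uniform $\Ldp$-definability of the singular-locus condition together with the non-affineness of $X$; once the affine-chart bookkeeping is in place and the Jacobian description of $X^{\mathrm{sing},\varphi}$ is transferred to the residue field via $\ac$, the rest is a direct application of Theorem \ref{integration of motivic} in the formally non-negative category.
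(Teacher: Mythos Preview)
Your proposal is correct and follows essentially the same route as the paper: express $h_F(y,k)$ as a power of $q_F$ times the integral of the indicator of a definable set, handle non-affine $X$ by a finite affine cover with the disjoint pieces $W_i=U_i\setminus\bigcup_{j<i}U_j$, and then apply Theorem~\ref{integration of motivic} to conclude formal non-negativity. The only cosmetic difference is packaging: the paper works directly in the ambient affine space $\mathcal{O}_F^m$ containing each chart, integrating the indicator of $\{x\in\mathcal{O}_F^m:\min_{i,j}\{\val g_i(x),\val(f_j(x)-y_j)\}\ge k,\ r_1(x)\in X^{\mathrm{sing},\varphi}(k_F)\}$ against Haar measure, whereas you phrase the same integral via the canonical measure $\mu_{X(\mathcal{O}_F)}$ and Lemma-Definition~\ref{lem:canonical measure}. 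Once you unwind the canonical measure on each chart the two computations coincide.
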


\begin{proof}
We first prove the special case with $X$ affine. Assume that $X\subseteq\mathbb{A}^{m}$
is the zero locus of $g_{1},...,g_{l}\in\mathcal{O}_{K}[x_{1},\dots,x_{m}]$.
Since $X$ and $Y$ are affine, the map $\varphi=(f_{1},\dots,f_{n}):X\rightarrow Y\subseteq\mathbb{A}^{n}$
is a polynomial map, thus with $f_{i}\in\mathcal{O}_{K}[x_{1},\dots,x_{m}]$.
Given $y\in Y(\mathcal{O}_{F})$, set: 
\[
S_{y,k,X}:=\left\{ x\in\mathcal{O}_{F}^{m}:\underset{i,j}{\min}\{\mathrm{val}(g_{i}(x)),\mathrm{val}(f_{j}(x)-y_{j})\}\geq k\right\} .
\]
Now, for any $y\in Y(\mathcal{O}_{F})$, we have 
\begin{align*}
\#\varphi^{-1}(r_{k}(y)) & 
=q_{F}^{km}\int_{\mathcal{O}_{F}^{m}}1_{S_{y,k,X}}\left|dx_{1}\wedge...\wedge dx_{m}\right|.
\end{align*}
Moreover, 
\[
\#\left(\varphi^{-1}(r_{k}(y))\cap(r_{1}^{k})^{-1}(X^{\mathrm{sing},\varphi}(k_{F}))\right)=q_{F}^{km}\int_{\mathcal{O}_{F}^{m}}1_{W_{y,k,X}}\left|dx_{1}\wedge...\wedge dx_{m}\right|,
\]
where 
\[
W_{y,k,X}:=\{x\in S_{y,k,X}:r_{1}(x)\in X^{\mathrm{sing},\varphi}(k_{F})\}.
\]
Since $1_{W_{y,X}}$ is formally non-negative, we get by Theorem \ref{integration of motivic}
that $h$ is formally non-negative as well. Now let $X=\bigcup_{i=1}^{N}U_{i}$
be a cover by smooth open affine subschemes $U_{i}$. For each $i$
and $F\in\mathrm{Loc}_{\gg}$ write $V_{i}:=U_{i}(\mathcal{O}_{F})\backslash\bigcup_{j=1}^{i-1}U_{j}(\mathcal{O}_{F})$
and note that 
\begin{align*}
\#\left(\varphi^{-1}(r_{k}(y))\cap(r_{1}^{k})^{-1}(X^{\mathrm{sing},\varphi}(k_{F}))\right) & =\sum_{i=1}^{N}\#\left((\varphi|_{U_{i}})^{-1}(r_{k}(y))\cap(r_{1}^{k})^{-1}(U_{i}^{\mathrm{sing},\varphi}(k_{F})\cap r_{1}(V_{i}))\right)\\
 & =\sum_{i=1}^{N}q_{F}^{km}\int_{\mathcal{O}_{F}^{m}}1_{W_{y,k,i}}\left|dx_{1}\wedge...\wedge dx_{m}\right|,
\end{align*}
where 
\[
W_{y,k,i}:=\{x\in S_{y,k,U_{i}}:r_{1}(x)\in U_{i}^{\mathrm{sing},\varphi}(k_{F})\cap r_{1}(V_{i})\}.
\]
This finishes the proof of Proposition \ref{prop:h is formally non-negative}. 
\end{proof}
We need one more lemma 
which we state in the generality of $E$-smooth morphisms, and which will further be used in the next section.
\begin{lem}
\label{lem:knoweledge on h(y,k)}Let $E\geq1$ be an integer, let
$\varphi$ be as in Theorem \ref{thm:number theoretic characterization of the (FRS) property}
and assume that $\varphi_{K}:X_{K}\to Y_{K}$ is $E$-smooth. Then,
for each $k\in\nats$ there exists a constant $C(k)>0$ such that
for each $F\in\mathrm{Loc}_{\gg}$, one has 
\[
\underset{y\in Y(\mathcal{O}_{F})}{\sup}h_{F}(y,k)<C(k)\cdot q_{F}^{-E}.
\]
\end{lem}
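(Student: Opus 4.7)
The plan is to identify the numerator of $h_F(y,k)$ with a count of $k_F$-points on the singular locus of a fiber of the jet morphism $J_{k-1}(\varphi)$, then apply the $E$-smoothness dimension bound together with the Lang--Weil estimates, after reducing to positive-characteristic local fields via the transfer principle. By Proposition \ref{prop:h is formally non-negative}, $\{q_F^E\cdot h_F(\cdot,k)\}_{F\in\mathrm{Loc}_\gg}$ is a motivic function for each fixed $k$, so Theorem \ref{thm:-transfer principle for bounds} reduces the estimate to $F\in\mathrm{Loc}_{+,\gg}$; I will work with such $F=k_F((t))$ throughout.

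For $F=k_F((t))$, one has $\mathcal{O}_F/\mathfrak{m}_F^k = k_F[t]/(t^k)$, so by the universal property of jet schemes $X(\mathcal{O}_F/\mathfrak{m}_F^k) = J_{k-1}(X)(k_F)$ with $\varphi$ corresponding to $J_{k-1}(\varphi)$. Writing $\tilde y\in J_{k-1}(Y)(k_F)$ for the jet associated to $r_k(y)$, one identifies $\varphi^{-1}(r_k(y))$ with $J_{k-1}(X)_{\tilde y, J_{k-1}(\varphi)}(k_F)$. Since the jet functor sends open immersions to preimages under $\pi_0^{k-1}$, Proposition \ref{prop:useful lemma for jet-flat maps} gives
\[
(\pi_0^{k-1})^{-1}(X^{\mathrm{sm},\varphi}) \;=\; J_{k-1}(X^{\mathrm{sm},\varphi}) \;=\; J_{k-1}(X)^{\mathrm{sm},J_{k-1}(\varphi)},
\]
so the condition $r_1^k(x)\in X^{\mathrm{sing},\varphi}(k_F)$ cuts out exactly the points of $J_{k-1}(X)_{\tilde y,J_{k-1}(\varphi)}$ that lie in the relative non-smooth locus of $J_{k-1}(\varphi)$. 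Because $\varphi_K$ is in particular jet-flat, $J_{k-1}(\varphi)$ is flat and $J_{k-1}(X)$ is smooth (Proposition \ref{prop:jet scheme description of the (FRS) property}(2)), hence the relative non-smooth locus meets each fiber in the (absolute) singular locus of that fiber, which by the $E$-smoothness assumption has codimension at least $E$ and thus dimension at most $kd-E$, where $d=\dim X_K-\dim Y_K$.

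Finally, the family $\bigl\{(J_{k-1}(X)_{\tilde y,J_{k-1}(\varphi)})^{\mathrm{sing}}\bigr\}_{\tilde y\in J_{k-1}(Y)}$ has complexity bounded only in terms of $k$ and the ambient data, so the uniform Lang--Weil estimates (Theorem \ref{thm:Lang-Weil}) yield a constant $C'(k)>0$ with
\[
\#\bigl(J_{k-1}(X)_{\tilde y,J_{k-1}(\varphi)}\bigr)^{\mathrm{sing}}(k_F) \;\leq\; C'(k)\cdot q_F^{kd-E},
\]
uniformly in $\tilde y$ and in $F\in\mathrm{Loc}_{+,\gg}$. Dividing by $q_F^{kd}$ gives the desired bound $h_F(y,k)<C(k)\cdot q_F^{-E}$ for $F\in\mathrm{Loc}_{+,\gg}$, and the transfer principle extends it to all of $\mathrm{Loc}_\gg$. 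The main subtle point is ensuring that the $E$-smoothness hypothesis on $\varphi_K$ actually propagates to the analogous dimension bound on the singular loci of $J_{k-1}(\varphi_{k_F})$-fibers for all residue fields of sufficiently large characteristic; this is an application of generic constancy of fiber dimensions in families, and is the reason the residue characteristic must be taken large enough (the ``$\gg$'' in $\mathrm{Loc}_\gg$ depending on $k$).
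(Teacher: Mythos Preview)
Your proof is correct and follows essentially the same route as the paper's: reduce to $\mathrm{Loc}_{+,\gg}$ via the transfer principle, identify the numerator of $h_F(y,k)$ with the $k_F$-points of the singular locus of a jet fiber using Proposition~\ref{prop:useful lemma for jet-flat maps} and flatness of $J_{k-1}(\varphi_{k_F})$, then invoke the codimension bound from $E$-smoothness and Lang--Weil with bounded complexity. The only notable difference is that the paper also cites Theorem~\ref{thm:improved supremum theorem} alongside the transfer principle for the reduction step, whereas you use the transfer principle alone; your choice is adequate here because the desired bound is already pointwise in $y$, so no supremum-approximation is needed before transferring.
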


\begin{proof}
Using Theorems \ref{thm:-transfer principle for bounds} and \ref{thm:improved supremum theorem}
it is enough to prove the lemma for $F$ lying in $\mathrm{Loc}_{+,\gg}$.
By Proposition \ref{prop:useful lemma for jet-flat maps} we have
\begin{equation}
J_{k}(X_{k_{F}}^{\mathrm{sm},\varphi_{k_{F}}})=J_{k}(X_{k_{F}})^{\mathrm{sm},J_{k}(\varphi_{k_{F}})},\label{eq:property of flat jets}
\end{equation}
for $F\in\mathrm{Loc}_{+,\gg}$. Let $Z_{\widetilde{y}}:=J_{k}(X_{k_{F}})_{\widetilde{y},J_{k}(\varphi_{k_{F}})}$
be a non-empty fiber of $J_{k}(\varphi_{k_{F}})$ over $\widetilde{y}\in J_{k}(Y)(k_{F})$.
Since $J_{k}(\varphi_{k_{F}})$ is flat and by (\ref{eq:property of flat jets}),
we have
\begin{equation}
Z_{\widetilde{y}}^{\mathrm{sing}}=Z_{\widetilde{y}}\cap J_{k}(X_{k_{F}})^{\mathrm{sing},J_{k}(\varphi_{k_{F}})}=Z_{\widetilde{y}}\cap(\pi_{0,X_{k_{F}}}^{k})^{-1}(X_{k_{F}}^{\mathrm{sing},\varphi_{k_{F}}}).\label{eq:singular locus of fibers of jets}
\end{equation}
The $E$-smoothness of $\varphi_{K}$ implies that the right hand
side is of codimension at least $E$ in $Z_{\widetilde{y}}$. By the
definition of $h$, by the fact that all fibers of $J_{k}(\varphi_{k_{F}})$
are of bounded complexity (for a fixed $k$) and using the Lang-Weil
estimates, the lemma follows. 
\end{proof}
\begin{proof}[Proof of the implication $(1)\Rightarrow(2)$]
We may assume that $Y$ is affine. Theorem \ref{thm:improved supremum theorem}
and Proposition \ref{prop:h is formally non-negative} imply that
there exist a constant $C_{0}>0$ and a motivic function $H$ in $\mathcal{C}_{+}(\ints_{\geq1})$
such that 
\begin{equation}
\underset{y\in Y(\mathcal{O}_{F})}{\sup}h_{F}(y,k)<H_{F}(k)<C_{0}\cdot\underset{y\in Y(\mathcal{O}_{F})}{\sup}h_{F}(y,k).\label{eq:approximation of h}
\end{equation}
It is thus enough to show that $\underset{k}{\sup}\,H_{F}(k)<C_{1}\cdot q_{F}^{-1}$
for some constant $C_{1}$ which is independent of $F$.

By Corollary \ref{cor:pushforward positive characteristic}, by (\ref{eq:(4.2)})
and since $h_{F}\leq g_{F}$, we deduce that the function $(y,k)\mapsto h_{F}(y,k)$
is bounded for each $F\in\mathrm{Loc}_{\gg}$. By (\ref{eq:approximation of h})
also $k\mapsto H_{F}(k)$ is bounded for each $F\in\mathrm{Loc}_{\gg}$.
As in the proof of Claim \ref{claim2} of Theorem \ref{thm:improved supremum theorem},
it follows that there exist a finite set $L$ of $\ints_{\geq1}$
and a constant $C'_{0}>0$ such that 
\begin{equation}
\underset{k}{\sup}\,H_{F}(k)\le C'_{0}\cdot\sum_{k\in L}H_{F}(k).\label{eq:D.F}
\end{equation}
Using (\ref{eq:approximation of h}), (\ref{eq:D.F}), Lemmas \ref{lem:1-smooth is (FRS)}(1)
and \ref{lem:knoweledge on h(y,k)} and by setting $C_{1}:=C_{0}C'_{0}\cdot\sum_{k\in L}C(k)$,
we obtain 
\[
\underset{k}{\sup}~H_{F}(k)\le C'_{0}\sum_{k\in L}H_{F}(k)\leq C'_{0}C_{0}\sum_{k\in L}\underset{y\in Y(\mathcal{O}_{F})}{\sup}h_{F}(y,k)<C_{1}q_{F}^{-1},
\]
for each $F\in\mathrm{Loc}_{\gg}$.This finishes the proof of $(1)\Rightarrow(2)$. 
\end{proof}

\subsection{\label{subsec:Number-theoretic-estimates-for}Number-theoretic estimates
for $E$-smooth and $\varepsilon$-jet-flat morphisms}

In this subsection we use the improved approximation of suprema (Theorem
\ref{thm:improved supremum theorem}), similarly as in Subsection
\ref{subsec:harder implication}, to provide uniform estimates for
$E$-smooth morphisms and $\varepsilon$-jet flat morphisms, improving
\cite[Theorem 8.18]{GHb}. We start by giving a characterization of
$E$-smooth morphisms. 
\begin{thm}
\label{thm:characterization of E-smooth morphisms}Let $E\geq1$ be
an integer, and let $\varphi:X\to Y$ be a dominant morphism between
finite type $\mathcal{O}_{K}$-schemes $X$ and $Y$, with $X_{K},Y_{K}$
smooth and geometrically irreducible. Then the following are equivalent: 
\begin{enumerate}
\item $\varphi_{K}:X_{K}\to Y_{K}$ is $E$-smooth. 
\item There exists $C_{1}>0$, such that for each $F\in\mathrm{Loc}_{\gg}$,
$k\in\ints_{\geq1}$ and $y'\in Y(\mathcal{O}_{F})$:
\[
h_{F}(y',k)<C_{1}q_{F}^{-E}.
\]
\item There exists $C_{2}>0$ such that for each $F\in\mathrm{Loc}_{\gg}$,
$k\in\ints_{\geq1}$ and $y\in Y(\mathcal{O}_{F}/\frak{\mathfrak{m}}_{F}^{k})$:
\[
\left|\frac{\#\varphi^{-1}(y)}{q_{F}^{k(\mathrm{dim}X_{K}-\mathrm{dim}Y_{K})}}-\frac{\#\varphi^{-1}(r_{1}^{k}(y))}{q_{F}^{\mathrm{dim}X_{K}-\mathrm{dim}Y_{K}}}\right|<C_{2}q_{F}^{-E}.
\]
\end{enumerate}
In particular, when $E=2$, the conditions above are further equivalent
to $\varphi_{K}:X_{K}\to Y_{K}$ being flat with fibers of terminal
singularities (see Lemma \ref{lem:1-smooth is (FRS)}).
\end{thm}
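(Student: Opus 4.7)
I propose to prove the three equivalences via the cycle $(1)\Rightarrow(2)\Rightarrow(3)\Rightarrow(1)$, following the blueprint of Theorem~\ref{thm:number theoretic characterization of the (FRS) property} and feeding in the refined codimension input carried by Lemma~\ref{lem:knoweledge on h(y,k)}. The implication $(1)\Rightarrow(2)$ is essentially verbatim the proof of $(1)\Rightarrow(2)$ of Theorem~\ref{thm:number theoretic characterization of the (FRS) property}: the function $h$ is formally non-negative (Proposition~\ref{prop:h is formally non-negative}), and $E$-smoothness implies (FRS) for $E\geq1$ (Lemma~\ref{lem:1-smooth is (FRS)}(1)), so Corollary~\ref{cor:pushforward positive characteristic} yields boundedness of $h_F$; Theorem~\ref{thm:improved supremum theorem} approximates $\sup_y h_F(y,k)$ by $H_F(k)\in\mathcal{C}_+(\ints_{\geq1})$, the Presburger rectilinearization of Claim~2 in the proof of Theorem~\ref{thm:improved supremum theorem} reduces $\sup_k H_F(k)$ to $\sum_{k\in L}H_F(k)$ for some finite $L\subset\ints_{\geq1}$, and Lemma~\ref{lem:knoweledge on h(y,k)} applied to each $k\in L$ delivers the desired bound $C_1 q_F^{-E}$.

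For $(2)\Rightarrow(3)$, given $y\in Y(\mathcal{O}_F/\mathfrak{m}_F^k)$ I lift to $y'\in Y(\mathcal{O}_F)$ via Hensel (using smoothness of $Y$) and apply Hensel to $X^{\mathrm{sm},\varphi}$ to cancel the smooth contributions, obtaining
\[
\frac{\#\varphi^{-1}(y)}{q_F^{kd}}-\frac{\#\varphi^{-1}(r_1^k(y))}{q_F^d}=h_F(y',k)-h_F(y',1),\qquad d=\dim X_K-\dim Y_K.
\]
The triangle inequality and (2) give (3) with $C_2=2C_1$.

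For $(3)\Rightarrow(1)$, the main direction, I split into jet-flatness and the codimension condition. For jet-flatness, I work over $F=\mathbb{F}_q((t))\in\mathrm{Loc}_{+,\gg}$ and use Hensel to lift an arbitrary $\tilde y\in J_k(Y)(\mathbb{F}_q)\cong Y(\mathbb{F}_q[t]/t^{k+1})$ to $y'\in Y(\mathcal{O}_F)$. Applying (3) together with the trivial Lang--Weil bound $\#\varphi^{-1}(\pi_0^k(\tilde y))=O(q^d)$ gives $\#J_k(\varphi_{\mathbb{F}_q})^{-1}(\tilde y)\leq Cq^{(k+1)d}$ uniformly in $\tilde y$, and by Lang--Weil this forces $\dim J_k(X_{\mathbb{F}_q})_{\tilde y,J_k(\varphi_{\mathbb{F}_q})}\leq(k+1)d$. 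Combined with the lower bound from dominance, every non-empty fiber has the expected dimension $(k+1)d=\dim J_k(X)-\dim J_k(Y)$. Miracle flatness (both $J_k(X)$ and $J_k(Y)$ are smooth) yields flatness of $J_k(\varphi_{\mathbb{F}_q})$ for all large $q$, and a standard constructibility and spreading-out argument inside the $\mathcal{O}_K$-model lifts this to flatness of $J_k(\varphi_K)$ in characteristic $0$: $\varphi_K$ is jet-flat.

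Given jet-flatness, Proposition~\ref{prop:useful lemma for jet-flat maps} identifies $Z_{\tilde y}^{\mathrm{sing}}=Z_{\tilde y}\cap(\pi_0^k)^{-1}(X^{\mathrm{sing},\varphi})$ for the jet fiber $Z_{\tilde y}:=J_k(X_{\mathbb{F}_q})_{\tilde y,J_k(\varphi_{\mathbb{F}_q})}$. Using Hensel to evaluate $\#Z_{\tilde y}^{\mathrm{sm}}(\mathbb{F}_q)=q^{kd}\cdot\#(\varphi^{-1}(\bar y)\cap X^{\mathrm{sm},\varphi}(\mathbb{F}_q))$ with $\bar y=\pi_0^k(\tilde y)$, condition (3) rearranges to
\[
\left|\frac{\#Z_{\tilde y}^{\mathrm{sing}}(\mathbb{F}_q)}{q^{(k+1)d}}-\frac{\#X^{\mathrm{sing},\varphi}_{\bar y}(\mathbb{F}_q)}{q^d}\right|<C_2 q^{-E}.
\]
Specialising to $k=1$ and $\tilde y=s_1(\bar y)$, expanding $\#J_1(X_{\bar y})(\mathbb{F}_q)=\sum_x q^{\dim T_xX_{\bar y}}$ and using $\dim T_xX_{\bar y}\geq d+1$ for $x\in X^{\mathrm{sing},\varphi}_{\bar y}$, the singular contribution is bounded below by $\#X^{\mathrm{sing},\varphi}_{\bar y}(\mathbb{F}_q)\cdot q^{-d}(q-1)$, yielding the strictly stronger bound
\[
\#X^{\mathrm{sing},\varphi}_{\bar y}(\mathbb{F}_q)<C'q^{d-E-1}
\]
uniformly in $\bar y\in Y(\mathbb{F}_q)$. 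Substituting back into the displayed estimate for arbitrary $\tilde y$ gives $\#Z_{\tilde y}^{\mathrm{sing}}(\mathbb{F}_q)<C''q^{(k+1)d-E}$, and Lang--Weil yields $\mathrm{codim}_{Z_{\tilde y}}Z_{\tilde y}^{\mathrm{sing}}\geq E$ for all $k\geq0$ and all $\tilde y$. This is the $E$-smoothness of $\varphi_K$. The \emph{in particular} clause for $E=2$ is immediate from Lemma~\ref{lem:1-smooth is (FRS)}(2).

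\emph{Main obstacle.} The delicate point is the codimension bound for jet fibers $Z_{\tilde y}$ over critical values $\bar y\in\varphi(X^{\mathrm{sing},\varphi})$: there the displayed estimate a priori only controls a difference of two potentially non-negligible quantities. The bootstrapping via the $k=1$ zero-section expansion---which extracts the strictly stronger codimension-$(E+1)$ bound on $X^{\mathrm{sing},\varphi}_{\bar y}$ itself directly from (3)---is what renders the second term negligible and closes the argument uniformly in $\tilde y$.
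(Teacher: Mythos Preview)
Your proposal is correct and follows essentially the same route as the paper. The implications $(1)\Rightarrow(2)\Rightarrow(3)$ match the paper verbatim, and for $(3)\Rightarrow(1)$ your bootstrapping via the $k=1$ zero-section (extracting the strictly stronger codimension-$(E{+}1)$ bound on $X^{\mathrm{sing},\varphi}_{\bar y}$ from the tangent-space inequality $\dim T_xX_{\bar y}\geq d+1$ at singular points) is exactly the mechanism the paper uses, only phrased as a direct point-count rather than as a proof by contradiction on the codimension $r$ of $(W_{y'})^{\mathrm{sing}}$.

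Two minor remarks. First, in your jet-flatness step the ``trivial Lang--Weil bound $\#\varphi^{-1}(\pi_0^k(\tilde y))=O(q^d)$'' already presupposes that the ordinary fibers of $\varphi$ have dimension $d$, i.e.\ flatness of $\varphi$; the paper sidesteps this by noting that condition $(3)$ with $E\geq1$ is at least as strong as the $E=1$ case, hence implies (FRS) by Theorem~\ref{thm:number theoretic characterization of the (FRS) property}, and (FRS) gives jet-flatness via Proposition~\ref{prop:jet scheme description of the (FRS) property}. Your direct argument is fine once you first run the zero-section argument of Theorem~\ref{thm:number theoretic characterization of the (FRS) property} at level $k=0$ to obtain ordinary flatness. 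Second, you invoke spreading-out explicitly for jet-flatness but should also mention it for the codimension-$E$ conclusion on $Z_{\tilde y}^{\mathrm{sing}}$, since $E$-smoothness is asserted for $\varphi_K$ in characteristic~$0$.
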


\begin{proof}
The proof of $(1)\Rightarrow(2)$ is identical to the proof of $(1)\Rightarrow(2)$
in Theorem \ref{thm:number theoretic characterization of the (FRS) property},
where the only exception is the inequality $\underset{y\in Y(\mathcal{O}_{F})}{\sup}h_{F}(y,k)<C_{1}q_{F}^{-E}$
for $F\in\mathrm{Loc}_{\gg}$ which is similarly obtained using Lemma
\ref{lem:knoweledge on h(y,k)}. $(2)\Rightarrow(3)$ is similar as
in Theorem \ref{thm:number theoretic characterization of the (FRS) property}. 

$(3)\Rightarrow(1)$: recall that condition $(3)$ implies that $\varphi_{K}$
is jet-flat and that
\[
\left|h_{F}(y',k)-h_{F}(y',1)\right|\leq C_{2}q_{F}^{-E},
\]
 for all $F\in\mathrm{Loc}_{\gg}$, $k\in\ints_{\geq1}$ and $y'\in Y(\mathcal{O}_{F})$.
Write $W_{y'}:=(X_{k_{F}})_{r_{1}(y'),\varphi_{k_{F}}}$. We claim
that $(W_{y'})^{\mathrm{sing}}$ is of codimension at least $E+1$
in $W_{y'}$ for all $F\in\mathrm{Loc}_{\gg}$ and $y'\in Y(\mathcal{O}_{F})$.
Indeed, assume $(W_{y'})^{\mathrm{sing}}$ is of codimension $r$
in $W_{y'}$ with $r\leq E$. Identifying $r_{1}(y')$ with $\widetilde{y}:=s_{1}(r_{1}(y'))=(r_{1}(y'),0)\in J_{1}(Y)(k_{F})$
under the zero section embedding $s_{1}:Y\hookrightarrow J_{1}(Y)$,
and using (\ref{eq:singular locus of fibers of jets}) one has 
\[
\left(J_{1}(X_{k_{F}})_{\widetilde{y},J_{1}(\varphi_{k_{F}})}\right)^{\mathrm{sing}}=J_{1}(W_{y'})\cap(\pi_{0,X_{k_{F}}}^{1})^{-1}(X_{k_{F}}^{\mathrm{sing},\varphi_{k_{F}}})=(\pi_{0,W_{y'}}^{1})^{-1}(W_{y'}^{\mathrm{sing}}).
\]
Since the dimension of the Zariski tangent space of a variety $Z$
at a singular point is larger than $\mathrm{dim}Z$, we have
\begin{align*}
\mathrm{dim}\left(J_{1}(X_{k_{F}})_{\widetilde{y},J_{1}(\varphi_{k_{F}})}\right)^{\mathrm{sing}} & \geq\mathrm{dim}(W_{y'})^{\mathrm{sing}}+\mathrm{dim}X_{K}-\mathrm{dim}Y_{K}+1\\
 & \geq\mathrm{dim}W_{y'}-r+\mathrm{dim}X_{K}-\mathrm{dim}Y_{K}+1\\
 & \geq\mathrm{dim}J_{1}(X_{k_{F}})_{\widetilde{y},J_{1}(\varphi_{k_{F}})}-r+1.
\end{align*}
Hence $\left(J_{1}(X_{k_{F}})_{\widetilde{y},J_{1}(\varphi_{k_{F}})}\right)^{\mathrm{sing}}$
is of codimension at most $r-1$. By replacing $F$ with a finite
extension, and using the Lang-Weil estimates, one can find $C_{3}>0$
such that 
\[
h_{F}(y',1)<C_{3}q_{F}^{-r}\text{ and }h_{F}(y',2)>\frac{1}{2}q_{F}^{-r+1}.
\]
But this contradicts Condition $(3)$. Therefore $h_{F}(y',1)<C_{3}q_{F}^{-(E+1)}$
for all $F\in\mathrm{Loc}_{\gg}$ and $y'\in Y(\mathcal{O}_{F})$.
But then by Condition $(3)$, we deduce that $h_{F}(y',k)<C_{3}q_{F}^{-E}$
which implies that $\varphi_{K}$ is $E$-smooth. 
\end{proof}
Finally, a number-theoretic estimate can be given to $\varepsilon$-jet-flat
morphisms, sharpening the estimate in \cite[Theorem 8.18]{GHb}.
\begin{thm}[{cf. \cite[Theorem 8.18]{GHb}}]
\label{thm:characterization of epsilon-jet flatness}Let $\varphi:X\to Y$
be a dominant morphism between finite type $\mathcal{O}_{K}$-schemes
$X$ and $Y$, with $X_{K},Y_{K}$ smooth and geometrically irreducible
and let $0<\varepsilon\leq1$. Then the following are equivalent: 
\begin{enumerate}
\item $\varphi_{K}:X_{K}\to Y_{K}$ is $\varepsilon$-jet flat. 
\item There exist $C,M>0$ such that for each $F\in\mathrm{Loc}_{\gg}$,
$k\in\ints_{\geq1}$ and $y\in Y(\mathcal{O}_{F}/\frak{\mathfrak{m}}_{F}^{k})$,
one has
\[
\frac{\#\varphi^{-1}(y)}{q_{F}^{k(\mathrm{dim}X_{K}-\mathrm{dim}Y_{K})}}<C\cdot k^{M}q_{F}^{k(1-\varepsilon)\mathrm{dim}Y_{K}}.
\]
\end{enumerate}
In particular, when $\varepsilon=1$ and assuming $\varphi_{K}$ has
normal fibers, the conditions above are further equivalent to $\varphi_{K}$
being flat with fibers of log-canonical singularities (Remark \ref{rem:relating jet flatness}).
\end{thm}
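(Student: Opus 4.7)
The plan is to adapt the scheme of Theorem \ref{thm:number theoretic characterization of the (FRS) property} (and its variant, Theorem \ref{thm:characterization of E-smooth morphisms}), working with the formally non-negative motivic function $g\in\mathcal{C}_{+}(Y\times\ints_{\geq1})$ from \eqref{eq:(4.2)} rather than $h$, and with the weaker target bound $Ck^{M}q_{F}^{k(1-\varepsilon)\mathrm{dim}Y_{K}}$ in place of $q_{F}^{-1}$.

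For $(1)\Rightarrow(2)$, I would first reduce to the case where $Y$ is affine and apply Theorem \ref{thm:improved supremum theorem} to $g$ to obtain $C_{0}>0$ and $G\in\mathcal{C}_{+}(\ints_{\geq1})$ with $\sup_{y\in Y(\mathcal{O}_{F})}g_{F}(y,k)\leq G_{F}(k)\leq C_{0}\cdot\sup_{y\in Y(\mathcal{O}_{F})}g_{F}(y,k)$ whenever the supremum is finite. Next, for each fixed $k$, I would bound $\sup_{y}g_{F}(y,k)$ via jet schemes: by Theorem \ref{thm:-transfer principle for bounds}, it suffices to work with $F\in\mathrm{Loc}_{+,\gg}$, where $\varphi^{-1}(r_{k}(y))$ identifies with a non-empty fiber of $J_{k-1}(\varphi_{k_{F}})$ over an $\mathbb{F}_{q_{F}}$-point. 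The $\varepsilon$-jet flatness of $\varphi_{K}$ (which spreads out to $k_{F}$ for $F\in\mathrm{Loc}_{+,\gg}$) bounds the dimension of this fiber by $k(\mathrm{dim}X_{K}-\varepsilon\mathrm{dim}Y_{K})$, and the Lang--Weil estimates (Theorem \ref{thm:Lang-Weil}) yield $\sup_{y}g_{F}(y,k)=O(q_{F}^{k(1-\varepsilon)\mathrm{dim}Y_{K}})$ for each fixed $k$. Feeding this pointwise-in-$k$ bound back into the motivic structure of $G$---a finite sum of products of Presburger terms in $k$ with Lang--Weil-controlled residue-field counts---saturates the leading $q_{F}$-exponent of each summand at $k(1-\varepsilon)\mathrm{dim}Y_{K}$, while the polynomial-in-$k$ Presburger factors and the residue-field counts together contribute at most a polynomial factor $k^{M}$, yielding $(2)$.

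For $(2)\Rightarrow(1)$, I would argue by contradiction. If $\varphi_{K}$ is not $\varepsilon$-jet flat, there exist $k_{0}\geq1$ and $\widetilde{x}\in J_{k_{0}-1}(X_{K})$ with $\mathrm{dim}\,J_{k_{0}-1}(X_{K})_{J_{k_{0}-1}(\varphi_{K})(\widetilde{x}),J_{k_{0}-1}(\varphi_{K})}\geq k_{0}(\mathrm{dim}X_{K}-\varepsilon\mathrm{dim}Y_{K})+1$. Spreading out over $\mathcal{O}_{K}$ and choosing a prime $\mathfrak{p}\subseteq\mathcal{O}_{K}$ of sufficiently large residue characteristic so that the fiber dimension is preserved modulo $\mathfrak{p}$ with a geometrically irreducible component of the same dimension, a Lang--Weil estimate applied over $F=(\mathcal{O}_{K}/\mathfrak{p})((t))$ yields $y\in Y(\mathcal{O}_{F}/\mathfrak{m}_{F}^{k_{0}})$ with $\#\varphi^{-1}(y)\geq\frac{1}{2}q_{F}^{k_{0}(\mathrm{dim}X_{K}-\varepsilon\mathrm{dim}Y_{K})+1}$, which contradicts $(2)$ once $q_{F}>2Ck_{0}^{M}$. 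The hard step is the extraction of the sharp polynomial factor $k^{M}$ in $(2)$, which distinguishes the statement from the weaker \cite[Theorem 8.18]{GHb}; this requires jointly exploiting the Presburger structure of $G_{F}(k)$ and the jet-scheme-based pointwise bound in $k$ to simultaneously control the leading $q_{F}$-exponent and the $k$-dependence of the coefficients.
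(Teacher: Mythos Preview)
Your outline is correct and follows essentially the same route as the paper: reduce to a single function $G\in\mathcal{C}_{+}(\ints_{\geq1})$ via Theorem~\ref{thm:improved supremum theorem}, establish for each fixed $k$ a bound $G_{F}(k)\leq C(k)\,q_{F}^{k(1-\varepsilon)\dim Y_{K}}$, and then exploit the Presburger structure of $G$ to upgrade this to a bound uniform in $k$ with only a polynomial loss $k^{M}$. There are two minor differences worth noting. First, for the pointwise-in-$k$ input you use Lang--Weil directly on the fibers of $J_{k-1}(\varphi_{k_{F}})$, whereas the paper quotes the slightly weaker bound $g_{F}(y,k)<q_{F}^{k(1-\varepsilon')\dim Y_{K}}$ for every $\varepsilon'<\varepsilon$ from \cite[Theorem~8.18]{GHb}; either input is enough to feed the next step. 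Second, the paper isolates the ``hard step'' you flag as a separate auxiliary result (Lemma~\ref{lem: auxiliary lemma}): if $g\in\mathcal{C}_{+}(\ints_{\geq1})$ satisfies $\lim_{q_{F}\to\infty}q_{F}^{-\delta}g_{F}(k)=0$ for each fixed $k$ and each $\delta>0$, then $g_{F}(k)<Ck^{M}$ uniformly. Its proof needs a bit more than your sketch indicates: after cell decomposition one writes $g_{F}(k)=\#Y_{F}\sum_{i} c_{i}(q_{F})\,q_{F}^{a_{i}k}k^{b_{i}}$, and one cannot immediately bound each summand from the bound on the sum because of possible cancellation; the paper first forces $\max_{i}a_{i}\leq0$ and then controls the coefficients $\#Y_{F}\,c_{i}(q_{F})$ via an induction on the number of terms, using the trick $H(2k)-2^{b_{j}}H(k)$ to kill one term at a time. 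Your $(2)\Rightarrow(1)$ spreading-out and Lang--Weil contradiction is precisely the content of \cite[Theorem~8.18]{GHb}, which the paper simply cites.
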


The proof of $(2)\Rightarrow(1)$ of Theorem \ref{thm:characterization of epsilon-jet flatness}  follows from \cite[Theorem 8.18]{GHb}.
In order to prove $(1) \Rightarrow (2)$, we prove an auxiliary lemma.
\begin{lem}
\label{lem: auxiliary lemma}
Let $g\in \mathcal{C}_+(\ints_{\geq 1})$ be a formally non-negative motivic function such that for every $\delta >0$ and $ k \in \ints_{\geq 1}$ we have (varying over $F \in \mathrm{Loc}_{\gg}$)
\[
\lim_{q_F \to \infty} q_F^{-\delta} g_F(k)=0.
\]
Then there exist  $M \in \nats$ and  $C>0$ such that $g_F(k) < Ck^M$ for every $k \in \ints_{\geq 1}$ and field 
$F \in \mathrm{Loc}_{\gg}$.
\end{lem}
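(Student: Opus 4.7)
The plan is to read off a polynomial bound in $k$ from a normal form of $g$ together with the growth hypothesis. By Denef--Pas orthogonality of the residue field and value group sorts combined with Presburger cell decomposition of $\ints_{\geq 1}$, we may split $g = \sum_i g_i$ where each $g_i \in \mathcal{C}_+$ has the form $g_{i,F}(k) = \#Z_i(k_F) \cdot f_{i,F}(k)$ on a Presburger cell $T_i \subseteq \ints_{\geq 1}$, with $Z_i$ a fixed residue-field definable set (constant along $T_i$ by orthogonality) and $f_i \in \mathcal{P}_+$. Since each summand $g_i$ is itself formally non-negative and satisfies $g_i \leq g$, the growth hypothesis transfers to each $g_i$, so it suffices to bound one such summand. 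The case $Z_i = \emptyset$ is trivial, so assume $\mathrm{dim}\, Z_i \geq 0$, in which case Lang--Weil gives $\#Z_i(k_F) = \Theta(q_F^{\mathrm{dim}\, Z_i})$ for $F \in \mathrm{Loc}_{\gg}$.

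On the cell $T$, write $f_F(k) = \sum_j c_j(q_F) \cdot P_j(k) \cdot q_F^{b_j k}$ with $c_j \in \mathbb{Q}(q_F)$ having no poles for $q_F$ large, $P_j(k) \in \ints[k]$ of degree $a_j$, and $b_j \in \ints$. The formal non-negativity of $f$ says $\sum_j c_j(s) P_j(k) s^{b_j k} \geq 0$ for all $s > 1$; after consolidating terms sharing the same leading-order contribution, we may assume the leading $s$-coefficient at any \emph{generic} $k_0 \in T$ (that is, with $P_j(k_0) \neq 0$ for every $j$, excluding only finitely many points of $T$) is a non-zero polynomial in $k_0$, hence strictly positive for all generic $k_0$. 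For such $k_0$, the effective $q_F$-degree of $g_F(k_0)$ equals $\mathrm{dim}\, Z + \max_j (b_j k_0 + \mathrm{deg}_{q_F}\, c_j)$, and the growth hypothesis forces this to be $\leq 0$. Taking $k_0 \to \infty$ along $T$ (when $T$ is unbounded) forces $b_j \leq 0$ for every $j$; evaluating at the smallest generic $k_{\min} \in T$ yields $\mathrm{deg}_{q_F}\, c_j \leq -\mathrm{dim}\, Z - b_j k_{\min}$.

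Combining $b_j \leq 0$ with the inequality at $k_{\min}$, one checks that $\mathrm{dim}\, Z + \mathrm{deg}_{q_F}\, c_j + b_j k \leq 0$ for every $k \in T$ with $k \geq k_{\min}$. Hence each term $\#Z(k_F) \cdot c_j(q_F) \cdot q_F^{b_j k} \cdot P_j(k)$ is bounded uniformly in $F \in \mathrm{Loc}_{\gg}$ by $O(1) \cdot |P_j(k)| = O(k^{a_j})$. The finitely many non-generic values of $k$ and any bounded Presburger cells contribute only $O(1)$ each by applying the growth hypothesis directly. Setting $M := \max_j a_j$ over the finite decomposition and $C$ equal to the sum of the resulting constants yields $g_F(k) < C k^M$ for every $k \in \ints_{\geq 1}$ and $F \in \mathrm{Loc}_{\gg}$, as required. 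The main technical point is the consolidation step used to guarantee that formal non-negativity forces a strictly positive leading $s$-coefficient at generic $k_0$; this rules out pathological cancellation of leading $q_F$-terms, and is carried out in the spirit of the Presburger rectilinearization used in Claim 2 of the proof of Theorem \ref{thm:improved supremum theorem}.
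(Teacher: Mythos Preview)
Your argument is essentially correct and takes a genuinely different route from the paper's proof for the infinite-cell case. Both proofs begin identically: reduce to a single summand $g_F=\#Z(k_F)\cdot f_F$ with $f\in\mathcal{P}_+$, then use Presburger cell decomposition to put $f_F(k)=\sum_i c_i(q_F)\,q_F^{a_ik}\,k^{b_i}$ on each cell with the $(a_i,b_i)$ distinct. The finite-cell case is handled the same way in both. For the infinite cell, the paper first shows $\max_i a_i\le 0$, then splits $g_F=H_F+E_F$ with $H_F$ collecting the $a_i=0$ terms, and proves $|\#Y_F c_i(q_F)|<C''$ for the coefficients in $H_F$ by an \emph{induction on the number of summands}, using the telescoping identity $\widetilde{H}_{j,F}(k)=H_F(2k)-2^{b_j}H_F(k)$ to kill one term at a time. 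Your approach instead exploits formal non-negativity directly: the leading $s$-coefficient of $\widetilde{f}(k_0,s)$ at generic $k_0$ must be strictly positive, so the effective $q_F$-degree equals the naive maximum $\max_j(b_jk_0+\deg c_j)$, and the growth hypothesis forces this $\le -\dim Z$. This yields a termwise degree bound without induction, which is arguably more transparent.

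Two points deserve tightening. First, your genericity condition ``$P_j(k_0)\neq 0$ for all $j$'' is not quite the right one: you also need $k_0$ to avoid the finitely many values where two distinct groups $(a,\deg c)$ tie for the maximum, and to avoid zeros of the leading-coefficient polynomial $\sum_{i\in\text{group}}(\mathrm{lead}\,c_i)\,k^{b_i}$ of each group. Once the $(a_i,b_i)$ are distinct and the $c_i$ are nonzero, this polynomial has distinct monomials and nonzero coefficients, hence is not identically zero; so the consolidation you invoke is simply grouping by $(a_i,\deg c_i)$, and no iterated subtraction is needed. Second, the assertion ``$\#Z(k_F)=\Theta(q_F^{\dim Z})$ for $F\in\mathrm{Loc}_{\gg}$'' overstates Lang--Weil: the lower bound holds only for infinitely many $F$ with unbounded $q_F$ (those over which the top components of $Z$ are defined), as in the paper's ($\dagger$). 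This is still enough to force $\dim Z+\max_j(b_jk_0+\deg c_j)\le 0$ via the hypothesis, so your conclusion stands.
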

\begin{proof}
Since $g$ is formally non-negative, we may write $g_F=\sum \# Y_{F,i} f_{F,i}$ for $f_{F,i} \in \mathcal{P}_+(\ints_{\geq 1})$ formally non-negative and $Y_{F,i} \subseteq \ints_{\geq 1} \times \RF^{r_i}$. 
It is enough to show the claim for a single summand $g_F=\#Y_F f_F$.
Using Presburger cell decomposition 
and the orthogonality of $\RF$ and $\VG$, we have a finite partition $\ints_{\geq 1} = \bigcup A_i$ and we may 
write $g_F(k)|_A=\sum \#Y_F c_i(q_F) q_F^{a_i k} k^{b_i}$ on each cell $A$, where $a_{i}\in\rats$, $b_{i}\in\nats$, $\{(a_{i},b_{i})\}_{i=1}^{N}$
 are mutually different, and $c_{i}(q)$
are rational functions in $q$. 

First assume our cell $A$ is finite, in which case it is  enough to prove the claim for a fixed $k=k_0$.
Using \cite[Main Theorem]{CvdDM92}, we have non-negative constants $d$, $C_1$ and $C_2$ such that 
\begin{flalign*}
\label{formula: definable LW}
\tag{$\dagger$}
\#Y_F < C_2 q_F^d ~\text{ for all }F\in \mathrm{Loc}_{\gg},~\text{ and }~
C_1q_{F}^{d} <  \#Y_F
<
C_2q_{F}^{d}
\text{}
\end{flalign*}
for infinitely many fields $F \in \mathrm{Loc}_{\gg}$ (with infinitely many residual characteristics).

Therefore, for every $\delta >0$ and infinitely many fields $F \in \mathrm{Loc}_{\gg}$ we have 
\[
\tag{$\triangle$}
\label{eq: left side of definable LW}
\lim_{q_F \to \infty} 
q_F^{-\delta}\left(
C_1q_{F}^{d}\right)\sum q_F^{a_ik_0}
c_i(q_F) k_0^{b_i}
\leq \lim_{q_F \to \infty} 
q_F^{-\delta} g_F(k_0)
=0,
\]
and thus $\deg_{q}\left(C_2q^d\sum q^{a_ik_0}
c_i(q) k_0^{b_i}\right) \leq 0$ as a rational function in $q$. The claim now follows 
since there exists $C_3>0$ such that for every $F \in \mathrm{Loc}_{\gg}$ with $q_F$ large enough,
\[
g_F(k_0) =
\# Y_F\sum
c_i(q_F)q_F^{a_i k_0} k_0^{b_i}
< C_2q_F^d\sum q_F^{a_ik_0}
c_i(q_F) k_0^{b_i} <C_3.
\]

Now, assume our  cell $A$ is infinite and 
 set $a=\max \{ a_i\}$.
 Using (\ref{eq: left side of definable LW}) with a general $k$ instead of a fixed $k_0$, 
 we must have $a \leq 0$, as otherwise for every $k$ large enough 
 $R(q)=
C_1q^{d}\sum q^{a_ik}
c_i(q) k^{b_i}$
 is a non-zero rational function in $q$ whose degree is positive, and therefore $\lim\limits_{q_F \to \infty} q_F^{-\delta} R(q_F) \neq 0$ for some $\delta >0$.

Set 
$H_F(k)
=\sum\limits_{i: a_i = 0} 
\# Y_F
c_i(q_F)k^{b_i}$ and 
$E_F(k)
=\sum\limits_{i: a_i <0} 
\# Y_F
c_i(q_F)q_F^{a_i k} k^{b_i}$, then we have
\[
g_F(k) =  H_F(k)+ E_F(k) \leq | H_F(k)|+ |E_F(k)|.
\]
Using (\ref{formula: definable LW}), we may find  a 
 constant $C'$  such that $|E_F(k)|<C'$ for every $k$ large enough and $F \in \mathrm{Loc}_{\gg}$. It is therefore left to take care of $H_F(k)$. We may assume $A = \ints_{\geq 1}$.
 
We prove by induction 
 on the number of summands $N$ 
 that if $H_F=\sum\limits_{i=1}^{N} \# Y_F
c_i(q_F) k^{b_i}$ is a function satisfying $\lim\limits_{q_F \to \infty} q_F^{-\delta} H_F(k)=0$  for every $k$ large enough and $\delta>0$, then
 there exists a constant $C''>0$ such that  
for every  $F\in \mathrm{Loc}_{\gg}$ we have $|\#Y_F c_i(q_F)|<C''$ for all $1 \leq i \leq N$.

For $N=1$ the claim follows by (\ref{formula: definable LW}) as before by showing $|\#Y_F c(q_F)|$ is bounded by a rational function of non-positive $q$-degree. 
To prove the claim for $N>1$, consider the functions
\[
\widetilde{H}_{j,F}(k)
={H}_F(2k)-2^{b_j}{H}_F(k)
=\sum\limits_{i=1}^{N} \# Y_F(2^{b_i}-2^{b_j}) 
c_i(q_F) k^{b_i}.
\]
Each $\widetilde{H}_{j,F}(k)$ has $N-1$ summands and satisfies the induction hypothesis since $H_F(k)$ and $H_F(2k)$ do, and therefore the proof by induction is concluded. Using the triangle inequality, we can now find a bound for $H_F(k)$ as required, proving the lemma.
 \end{proof}
\begin{proof}[Proof of Theorem \ref{thm:characterization of epsilon-jet flatness}]
The proof of $(2)\Rightarrow(1)$ follows from \cite[Theorem 8.18]{GHb}.
It is left to prove $(1)\Rightarrow(2)$. 
By Theorem \ref{thm:improved supremum theorem}, there exist $G\in\mathcal{C}_{+}(\ints_{\geq1})$
and $C'>1$ such that 
\[
\underset{y\in Y(\mathcal{O}_{F})}{\sup}g_{F}(y,k)<G_{F}(k)<C'\underset{y\in Y(\mathcal{O}_{F})}{\sup}g_{F}(y,k).
\]
By \cite[Theorem 8.18]{GHb}
and  Theorem \ref{thm:-transfer principle for bounds}, for each 
$0<\varepsilon'<\varepsilon$ we have
$
g_{F}(y,k)<q_{F}^{k((1-\varepsilon')\mathrm{dim}Y_{K})}
$
for all $F\in\mathrm{Loc}_{\gg}$, $k\in\ints_{\geq1}$ and $y\in Y(\mathcal{O}_{F})$.
Therefore, for every $0<\varepsilon'<\varepsilon$ we have
\begin{equation}
G'_F(k):=G_{F}(k)q_{F}^{-k((1-\varepsilon)\mathrm{dim}Y_{K})}
<C'q_{F}^{-k((1-\varepsilon)\mathrm{dim}Y_{K})}q_{F}^{k((1-\varepsilon')\mathrm{dim}Y_{K})}
=C' q_{F}^{k((\varepsilon-\varepsilon')\mathrm{dim}Y_{K})}
\label{eq:assumption on G_F}
\end{equation}
for every $F$ with residual characteristic large enough (which may depend on $\varepsilon'$). 
For any fixed $k$ and $\delta$, 
choose $\varepsilon'$ such that $k((\varepsilon-\varepsilon')\mathrm{dim}Y_{K})< \delta$.	 Using Lemma \ref{lem: auxiliary lemma} on $G'_{F}(k)$ the claim follows.\footnote{Note we may assume $\varepsilon$ is a rational number by Remark \ref{rem:relating jet flatness}.}
\end{proof}
\begin{rem}
To conclude the paper, we note that a possible deeper understanding
of the estimates in Theorems \ref{Thm A}, \ref{thm:number theoretic characterization of the (FRS) property},
\ref{thm:characterization of E-smooth morphisms} and \ref{thm:characterization of epsilon-jet flatness}
may come from the results on exponential sums in \cite{CMN19} and
may be related to the motivic oscillation index $\mathrm{moi}(\varphi)$
of $\varphi$ \footnote{For the definition in the case that $\varphi:\mathbb{A}^{n}\rightarrow\mathbb{A}^{1}$
is a polynomial, see \cite[Section 3.4]{CMN19}.}. The motivic oscillation index controls the decay
rate of the Fourier transform of $\varphi_{*}(\mu_{\mathcal{O}_{F}^{n}})$
(see \cite[Proposition 3.11]{CMN19}). In the non-(FRS) case, optimal
bounds on the decay rate were given in \cite[Theorem 1.5]{CMN19},
proving a conjecture of Igusa on exponential sums \cite{Igu78}. Here
it can also be shown that $\mathrm{moi}(\varphi)$ controls the explosion
rate of the density of the pushforward measure $\varphi_{*}(\mu_{\mathcal{O}_{F}^{n}})$
near a critical point (see e.g. \cite[Theorem 8.18]{GHb}). The (FRS)
case of Igusa's conjecture is open (see the discussion in \cite[Section 3.4]{CMN19}),
and a potential connection between Theorems \ref{thm:number theoretic characterization of the (FRS) property},
\ref{thm:characterization of E-smooth morphisms} and the $\mathrm{moi}(\varphi)$
could be interesting in that regard. 
\end{rem}

\bibliographystyle{alpha}
\bibliography{bibfile}

\end{document}